\newtheorem{theorem}{Theorem}[section]    % Standard theorem environment
\newtheorem{lemma}[theorem]{Lemma}          % Lemma environment with numbering 
\newtheorem{proposition}[theorem]{Proposition}  
\newtheorem{corollary}[theorem]{Corollary} 
\theoremstyle{definition}
\newtheorem{definition}[theorem]{Definition}
\newtheorem{example}[theorem]{Example}    % Definition environment with 
\newtheorem*{remark}{Remark}             % Unnumbered environment for remarks.
\numberwithin{equation}{section}
\newcommand{\e}{\varepsilon}
\newcommand{\F}{\mathcal F_{ob} }
\newcommand{\Int}{\textrm{Int}}
\newcommand{\R}{\mathbb{R}}
\newcommand{\D}{\mathcal{D}}
\newcommand{\id}{{\rm{id}}}
\newcommand{\mF}{\mathcal{F} }
\newcommand{\sgn}{{\tt sgn}}
\def\ul{\underline}
\title{Operations on open book foliations} 
\author{Tetsuya Ito}
\address{Research Institute for Mathematical Sciences, Kyoto university
Kyoto, 606-8502, Japan}
\email{tetitoh@kurims.kyoto-u.ac.jp}
\urladdr{http://kurims.kyoto-u.ac.jp/~tetitoh/}
\author{Keiko Kawamuro}
\address{Department of Mathematics \\ 
The University of Iowa \\ Iowa City, IA 52240, USA}
\email{kawamuro@iowa.uiowa.edu}
\date{\today}
\begin{document}
\maketitle
\begin{abstract}
We study b-arc foliation change and exchange move of open book foliations % and closed braids in general open book, 
which generalize the corresponding operations in braid foliation theory. We also define a bypass move as an analogue of Honda's bypass attachment operation. 

As applications, we study how open book foliations change under a stabilization of the open book. 
We also generalize Birman-Menasco's split/composite braid theorem: 
We show that closed braid representatives of a split (resp. composite) link in a certain open book can be converted to a split (resp. composite) closed braid by applying exchange moves finitely many times. 
\end{abstract}

%\tableofcontents

\section{Introduction}

This is a sequel of the papers \cite{ik1-1,ik1-2,ik2} on open book foliations in which techniques to study the topology and contact structures of 3-manifolds are developed. 
The idea of an open book foliation originally came from the works of Bennequin \cite{Ben} and Birman and Manasco \cite{BM4, BM2, BM5, BM1, BM6, BM3, bm1, bm2}.
 
%In this paper we develop several operations that can modify and simplify a given open book foliation and a closed braid w.r.t. a general open book. %These operations put a surface and a closed braid in positions from which we may read properties of the underlying contact $3$-manifold, the closed braid, and the monodromy of the open book.
%After these operations we may read properties of the underlying contact $3$-manifold, the closed braid, and the monodromy of the open book. 

In this paper we study three types of operations on open book foliations on surfaces that are realized by isotopies of the surfaces; {\em b-arc foliation change} (\S \ref{sec:foliation-change}), {\em bypass move} (\S \ref{sec: Bypass move}) and {\em exchange move} (\S \ref{sec:exchangemove}). 

A b-arc foliation change and an exchange move are generalizations of Birman-Menasco's {\em foliation change} and exchange move in braid foliation theory. 
A bypass move can be seen as an analogue of Honda's {\em bypass attachment} in convex surface theory.

It is natural to expect that our b-arc foliation change and exchange move on open book foliations are more complex than Birman and Menasco's original moves on braid foliations. 
In fact, we need additional assumptions that make these operations actually work. 

Roughly speaking, a b-arc foliation change and a bypass move are associated to isotopies interchanging the `heights' of a pair of adjacent saddle points of a surface. A b-arc foliation change treats the case that two saddles have the same sign whereas a bypass move treats the case with opposite signs. 

These isotopies are local in the sense that they take   place in 3-balls. 
Hence both a b-arc foliation change and a bypass move are local operations on open book foliations. 
Under these operations, the total number of singularities of an open book foliation stays the same.
Moreover, if there are braids passing through the 3-balls, the isotopies preserve the braid isotopy classes.

%They locally change the open book foliation without affecting the closed braid, and may make a given open book foliation locally simpler 
%\marginpar{\tiny KK thinks b-arc foliation change and a bypass move does not make the foliation simpler. They just rotate foliation 120 degree.}
%although the total number of singular points remains to be the same.
%A b-arc foliation change is a generalization of Birman-Menasco's {\em foliation change} in braid foliation theory. On the other hand, a bypass move can be seen as an analogue of Honda's {\em bypass attachment} in convex surface theory. %adapted to the open book foliation settings.
%\marginpar{\tiny ``adapted to the open book foliation settings'' is removed because KK couldn't understand the nuance of ``adapted''.} 

On the contrary, an isotopy realizing an exchange move  may change the braid isotopy class. (The braid index and the transverse link type of the braid are preserved.) 
Also the number of singularities of an open book foliation decreases by an exchange move.

In the second half of the paper we discuss two applications: 

We study effect of (de)stablizations of open books on open book foliations in \S\ref{sec:stabilization}. 
We show that the open book foliation of a surface changes in two ways after a stabilization of the open book. 
Next, we see that the resulting two open book foliations are related to each other by bypass moves and exchange moves.

As applications of b-arc foliation change and exchange move operations, in \S\ref{sec:braid theorem} we consider the split/composite closed braid theorems of Birman and Menasco \cite{BM4} in the setting of general open books and prove them under certain conditions.

\section{Preliminaries} 

We assume that the readers are familiar with the basic definitions and properties of open book foliations which can be found in \cite{ik1-1, ik2}. 

Let $(S,\phi)$ be an open book decomposition of a closed oriented $3$-manifold $M$, where $S=S_{g, r}$ is a genus $g$ surface with $r$ boundary components, and $\phi \in {\rm Diff}^+(S, \partial S)$ an orientation preserving differomorphism of $S$ fixing the boundary pointwise.   
The manifold $M$ is often denoted by $M_{(S, \phi)}$. 
Let $B$ denote the {\em binding} of the open book and $\pi:M \setminus B \rightarrow S^{1}$ the fibration whose fiber $S_{t} := \pi^{-1}(t)$ is a {\em page}.

An oriented link $L$ in $M_{(S, \phi)}$ is called a {\em closed braid} with respect to the open book $(S,\phi)$ if $L$ is disjoint from the binding $B$ and positively transverse to each page $S_{t}$. %A closed braid $L$ is naturally regarded as a transverse link in $(M,\xi)$.  Conversely, thanks to Bennequin~\cite{Ben} and Pavelescu~\cite{Pav}, any transverse link in a contact 3-manifold $(M, \xi)_{(S, \phi)}$ can be transversely isotoped to a closed braid in $(S,\phi)$. 

%Let $L \subset M=M_{(S,\phi)}$ be a (possibly empty) positive null-homologous closed braid with respect to $B$, and $F$ be a Seifert surface of $L$ or an oriented closed surface that lies in $M\setminus L$.
Let $F \subset M_{(S, \phi)}$ be an embedded, oriented surface possibly with boundary. 
If $F$ has boundary, $\partial F$, we require that $\partial F$ is a closed braid with respect to $(S, \phi)$. 
%Consider the singular foliation $\mF=\mF(F)$ on $F$ induced by the intersection of the pages $\{S_t\}$ and $F$. 
%We call each connected component of $F \cap S_t$ a {\em leaf}.  
Up to perturbation of $F$ the singular foliation $$\F(F)=\left\{ F \cap S_t \ | \ t \in [0, 1] \right\}$$ admits the following conditions (see Theorem 2.5 of \cite{ik1-1}). 

%\begin{definition}\label{def of OBF}
%We say that the above $\mF$ is an {\em open book foliation}, denoted by $\F(F)$, if the following four conditions are satisfied.
\begin{description}
\item[($\mF$ i)] 
The binding $B$ pierces the surface $F$ transversely in finitely many points. 
Moreover, $p \in B \cap F$ if and only if there exists a disc neighborhood $N_{p} \subset \Int(F)$ of $p$ on which the foliation $\F(N_p)$ is radial with the node $p$, see Figure~\ref{fig:sign}-(1, 2). 
We call $p$ an {\em elliptic} point. 

\item[($\mF$ ii)] 
The leaves of $\F(F)$ along $\partial F$ are transverse to $\partial F$. 

\item[($\mF$ iii)] 
All but finitely many fibers $S_{t}$  intersect $F$ transversely.
Each exceptional fiber is tangent to $F$ at a single point $\in\Int(F)$.
In particular, $\F(F)$ has no saddle-saddle connections.

\item[($\mF$ iv)] 
All the tangencies of $F$ and fibers are of saddle type, see Figure~\ref{fig:sign}-(3, 4). 
We call them {\em hyperbolic} points.

\end{description}

\begin{definition}
We call each connected component of $F \cap S_t$ a {\em leaf}.  
%We say that a page $S_{t}$ is {\it regular} if $S_{t}$ intersects $F$ transversely and is {\it singular} otherwise.
%Similarly, we say a leaf $l$ of $\mF$ is {\it regular} if $l$ does not contain a tangency point and is {\it singular} otherwise.
We say a leaf $l$ of $\F(F)$ is {\it regular} if $l$ does not contain a tangency point and is {\it singular} otherwise.
The regular leaves are classified into the following three types:
\begin{enumerate}
\item[a-arc]: An arc where one of its endpoints lies on $B$ and the other lies on $\partial F$.
\item[b-arc]: An arc whose endpoints both lie on $B$.
\item[c-circle]: A simple closed curve.
\end{enumerate} 
\end{definition}

In order to study topology and geometry of 3-manifolds $M_{(S, \phi)}$ it is often important to take the following homotopical properties of leaves into account.

\begin{definition}\label{def of essential}\cite{ik2}
We say that a b-arc $b \subset S_{t}$ is {\em essential} (resp. {\em strongly essential}) if $b$ is not boundary-parallel in $S_{t}\setminus(S_{t} \cap \partial F)$ (resp. $S_t$). 
%A c-circle $c \subset S_{t}$ is called {\em essential} if $c$ is an essential simple closed curve in $S_{t}\setminus(S_{t} \cap \partial F)$. 
%\marginpar{\tiny removed essential c-circle to avoid confusion in Lem7.5}
An elliptic point $v$ is called {\em strongly essential} if every $b$-arc that ends at $v$ is strongly essential.
An open book foliation $\F(F)$ is called ({\em strongly}) {\em essential} if all the $b$-arcs are (strongly) essential. %($c$-circles need not be essential).
\end{definition}

For a b-arc the conditions `{\em boundary parallel in $S_t$}' and `{\em non}-strongly essential' are equivalent.  In this paper we prefer to use the former. 

Essentiality is a natural condition in the sense that if $F$ is incompressible then applying an isotopy that fixes $\partial F$ (if it exists) $F$ admits an essential open book foliation \cite{ik2}.
%In \cite{ik2} we prove that if $F$ is incompressible, then applying an isotopy that fixes $\partial F$ (if it exists) $F$ admits an essential open book foliation.

\begin{definition}\label{def:separating} 
We say a b-arc $b$ in the page $S_t$ is {\em separating} if $b$ separates the page $S_t$ into two regions. 
\end{definition} 

Clearly an inessential or boundary-parallel b-arc is separating. 
We will use this separating condition in Proposition~\ref{prop:sufficient-conditions}, Lemmas \ref{lemma:degeneratebc} and \ref {lemma:sign} below. 
%As we will see later, existence of separating b-arcs often adds useful constraints to open book foliations. See Lemmas \ref{lemma:degeneratebc} and \ref {lemma:sign} for example.

%A surface $F$ always admits an open book foliation.

%\begin{theorem}
%\begin{enumerate}
%\item \cite[Theorem 2.5, Proposition 2.6]{ik1-1}
%By isotopy that fixes the boundary $\partial F$ $($if it  exists$)$ every surface $F$ admits an open book foliation $\F(F)$. Moreover, with further perturbation of $F$ fixing $\partial F$ we can make $\F(F)$ having no c-circles. 

%\item \cite{ik2} %[Theorem *.*]{ik2} 
%If $F$ is incompressible, then applying an isotopy that fixes $\partial F$ $($if it exists$)$ $F$ admits an essential open book foliation.
%\end{enumerate}
%\end{theorem}

We say that an elliptic point $p$ is {\em positive} (resp. {\em negative}) if the binding $B$ is positively (resp. negatively) transverse to $F$ at $p$.
The sign of the hyperbolic point $q$ is {\em positive} (resp. {\em negative}) if the positive normal direction of $F$ at $q$ agrees (resp. disagrees) with the direction of $t$.
We denote the sign of a singular point $v$ by $\sgn(v)$.
See Figure \ref{fig:sign}.
%, where we describe an elliptic point by a hollowed circle with its sign inside, a hyperbolic point by a dot with the sign nearby, and positive normals to $F$, $\vec n_F$, by dashed arrows. 
%
\begin{figure}[htbp]
\begin{center}
%\ShowGrid
\SetLabels
(.3*.92) (1)\\
(.83*.92) (2)\\
(.3*.4) (3)\\
(.83*.4) (4)\\
\endSetLabels
\strut\AffixLabels{\includegraphics[width=130mm]{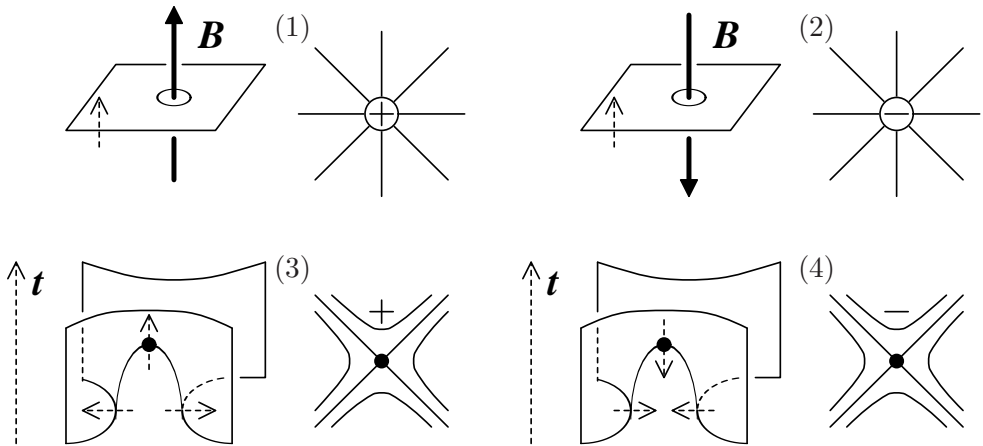}}
\end{center}
\caption{
(1, 2) elliptic points. (3, 4) hyperbolic points. 
Dashed arrows indicate normal vectors to $F$.}\label{fig:sign}
\end{figure}

%We see a hyperbolic singularity as a process of switching the configuration of leaves. As $t$ increases, two regular leaves $l_{1}$ and $l_{2}$ approach along an arc $\gamma$ (the dashed arc in Figure~\ref{fig:hyperbolic}) connecting $l_{1}$ and $l_{2}$. At a critical moment $l_{1}$ and $l_{2}$ form a hyperbolic singularity, then the configuration is changed. See the passage in Figure~\ref{fig:hyperbolic}. 
%\begin{figure}[htbp]
%\SetLabels
%(.1*.78) $\gamma$\\
%(.05*.86) $l_1$\\
%(.18*.86)   $l_2$\\
%(.46*.59) $\gamma$\\
%(.36*.55) $l_1$\\
%(.58*.55)   $l_2$\\
%\endSetLabels
%\strut\AffixLabels{\includegraphics*[width=100mm]{hyperbolic.eps}}\caption{A description arc (dashed) for a hyperbolic singularity.}\label{fig:hyperbolic}
%\end{figure}

%The hyperbolic singularity is determined by the isotopy class of $\gamma$. We call $\gamma$ a {\em description arc} of the hyperbolic singularity and use a dashed arc. We will denote the sign of a singular point $x$ of $\F$ by $\sgn(x)$.

Hyperbolic singularities in $\F(F)$ are classified into  
six types, according to the types of nearby regular leaves: Type $aa$, $ab$, $bb$, $ac$, $bc$, and $cc$ as depicted in Figure ~\ref{region}.
\begin{figure}[htbp]
\begin{center}
\SetLabels
(0.15*0.55)  $aa$-tile\\
(0.5*0.55)    $ab$-tile\\
(0.84*0.55)  $bb$-tile\\
(0.15*0.04)  $ac$-annulus\\
(0.5*0.04)    $bc$-annulus\\
(0.84*0.04)  $cc$-pants\\
\endSetLabels
\strut\AffixLabels{\includegraphics*[scale=0.5, width=90mm]{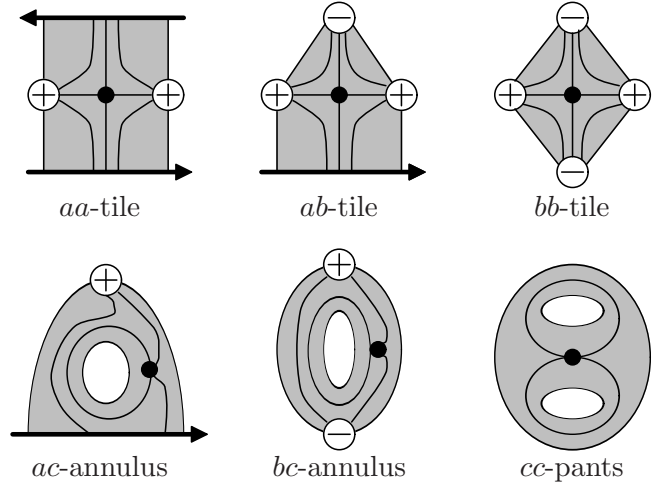}}
\caption{Six types of regions.}\label{region}
\end{center}
\end{figure}
Such a model neighborhood is called a {\em region}.  
We denote by $\sgn(R)$ the sign of the hyperbolic point contained in the region $R$. 

\section{B-arc foliation change}\label{sec:foliation-change}

In this section we generalize Birman-Menasco's {\em foliation change} of braid foliations \cite[p.123]{BM6} to  a {\em b-arc foliation change} of open book foliations (Theorem~\ref{thm:folchange}).

Here is the set up for a $b$-arc foliation change: 
Let $(S,\phi)$ be an open book decomposition of a $3$-manifold $M$ and $\F(F)$ the open book foliation on $F$, where $F$ is a closed surface in the complement of a closed braid $L$ or a Seifert surface of a closed braid $L$.

We will use the underlined letter ``${\underline a}$'' to indicate the image of an arc $a \subset S_t$ superimposed on $S$ by a natural projection $(p, t) \in S_t \mapsto p \in S$. This allows us to compare leaves in different pages.
We assume that the region decomposition of $F$ contains two tiles $R_{1}, R_{2}$ satisfying the following conditions (i)--(iv). See also  Figure~\ref{fig:folchange}-(a):
\begin{description}
\item[(i)] 
$R_i$ $(i=1,2)$ is either an $ab$-tile or a $bb$-tile.
\item[(ii)] 
$\sgn(R_{1})=\sgn(R_{2})=\e \in \{+1, -1\}.$
\item[(iii)] 
$R_{1}$ and $R_{2}$ are adjacent exactly at one b-arc, $b$.
\end{description}
\begin{figure}[htbp]
 \begin{center}
 \SetLabels
(.01*.73) (a)\\ 
(0.1*.66)    $l_4$\\ %b_2$\\
(0.36*.66)  $l_3$\\ %b_1$\\
(.2*.73) $v$\\
(.2*.25) $A$\\ %$w$\\
(.44*.63)   $B$\\ %$v_1$\\
(-.01*.61)  $C$\\ %$v_2$\\
(.44*.5) $l_2$\\
(.32*.3) $l_1$\\
(.1*.3)   $l_6$\\
(0*.47)  $l_5$\\ %l_2$\\
(.2*.5)  $b$\\
(.1*.53) $\e$\\
(.35*.53) $\e$\\
(0.35*0.38)  $R_{1}$\\
(0.13*0.38)  $R_{2}$\\
(.53*.95) (b)\\
(0.65*.94) $l_4$\\ %$b_2$\\
(0.91*.94)   $l_3$\\ %$b_1$\\
(.75*1) $v$\\
(.75*.53) $A$\\ %$w$\\
(1*.85)   $B$\\ %$v_1$\\
(.55*.85)  $C$\\ % $v_2$\\
(0.89*.57)   $l_1$\\
(.55*.75)  $l_5$\\ %$l_2$\\
(1*.75) $l_2$\\
(.66*.57) $l_6$\\ 
(.73*.67) $\e$\\
(.81*.85) $\e$\\
(0.9*0.8)  $R_{2}'$\\
(0.67*0.7)  $R_{1}'$\\
(.77*.77) $l$\\
(.53*.05) (c)\\
(0.65*.39) $l_4$\\ %$b_2$\\
(0.91*.39)  $l_3$\\ %$b_1$\\
(.75*.44) $v$\\
(.75*0) $A$\\ %$w$\\
(1*.3)   $B$\\ %$v_1$\\
(.55*.3)  $C$\\ %$v_2$\\
(0.89*.04)   $l_1$\\
(.55*.2)  $l_5$\\ %$l_2$\\
(1*.2) $l_2$\\ 
(.66*.04) $l_6$\\ 
(.73*.31) $\e$\\
(.81*.14) $\e$\\
(0.93*0.2)  $R_{2}'$\\
(0.8*0.3)  $R_{1}'$\\
(.77*.23) $l$\\
\endSetLabels
\strut\AffixLabels{\includegraphics*[scale=0.5, width=100mm]{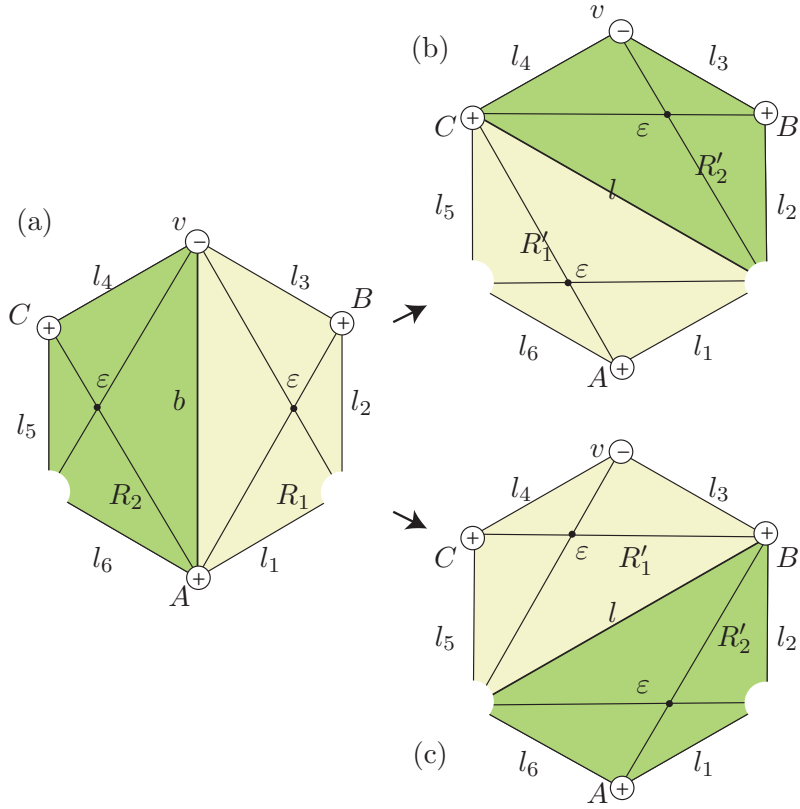}}
\caption{$b$-arc foliation change.}\label{fig:folchange}
\end{center}
\end{figure}
Let $v$ (resp. $A$) be the negative (resp. positive) elliptic point at the end of $b$, and %$b_{i}\subset \partial R_{i}$ the b-arc that ends at  $v$ and is different from $b$. 
$l_1, \cdots, l_6$ be boundary arcs of $R_1 \cup R_2$ as depicted in Figure~\ref{fig:folchange}-(a). 
%We may assume that $b \subset S_{t_1}$ and $\partial (R_{1} \cup R_2) \subset (S_{t_0} \cup S_{t_2})$ where $0<t_{0}<t_{1}<t_{2}<1$.
Suppose that $l_k \subset S_{t_k}$ where $k=1,\cdots,6$ and $t_k \in [0,1)$, and the hyperbolic point of $R_i$ is sitting on the page $S_{\tau_i}$. 
The open book foliation $\F(R_1 \cup R_2)$ imposes the following relations. 
%\marginpar{should we say ``cyclic order''??}
$$
\begin{array}{rcccccl}
&&\tau_1  & < &&& t_2  \\
\max\{t_1, t_3\} &< & \tau_1 &<& \tau_2 & <& \min\{ t_4, t_6\}\\
t_5 &&& < & \tau_2 &&
\end{array} 
$$
In addition to the above conditions (i, ii, iii) we further require that $$\max\{t_1, t_3, t_5\} < \tau_1 < \tau_2  < \min\{t_2, t_4, t_6\}, \mbox{ or}$$ 
\begin{description}
\item[(iv)] 
$t_1= t_3 = t_5 <\tau_1 < \tau_2  <  t_2= t_4= t_6$.
\end{description}

Let $\gamma_i$ denote the describing arc for the hyperbolic point in $R_i$ ($i=1,2$). 
We may assume that $\ul{\gamma_1}$ joins $\ul{l_1}$ and $\ul{l_3}$. See Figure~\ref{fig:tree}. 
\begin{figure}[htbp]
\begin{center}
\SetLabels
(0*.8) $(\e=+1)$\\ 
(.21*.8) $\ul{l_1}$\\
(.1*.5) $\ul{\gamma_1}$\\
(.25*.27) $\ul{\gamma_2}$\\
(0.33*.6)    $\ul{l_5}$\\%2}$\\
(0.24*.96) $A$\\%$w$\\
(0*.25) $B$\\%v_1$\\
(.14*.25) $\ul{l_3}$\\%\\b_1}$\\
(.25*0) $v$\\
(.4*.27) $C$\\%v_2$\\
(.6*.8) $(\e=-1)$\\ 
(.59*.25) $A$\\%w$\\
(0.84*.96) $B$\\%v_1$\\
(0.62*.6)    $\ul{l_1}$\\
(0.77*.29)    $\ul{l_5}$\\%2}$\\
(.72*.6) $\ul{\gamma_1}$\\
(.82*.4) $\ul{\gamma_2}$\\
(.85*.7) $\ul{l_3}$\\%b_1}$\\
(.85*0) $C$\\%v_2$\\
(1.02*.51) $v$\\
\endSetLabels
\strut\AffixLabels{\includegraphics*[scale=0.5, width=120mm]{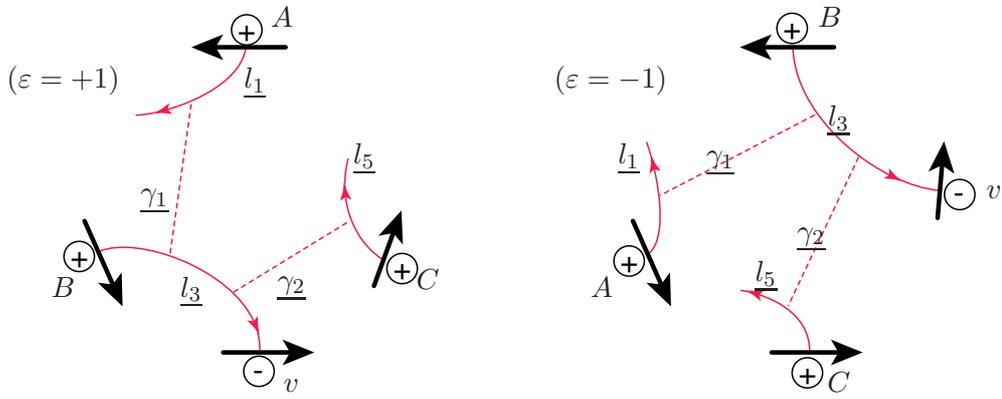}}
\caption{The superimposed graph in $S$.}\label{fig:tree}
\end{center}
\end{figure}
By sliding $\ul{\gamma_2}$ along $\ul b$, we can further  assume that $\ul{\gamma_2}$ joins $\ul{l_3}$ and $\ul{l_5}$. 
Since $\sgn(R_1)=\sgn(R_2) = \e$, if we walk along $\ul{l_3}$ from $B$ to $v$, regardless of the sign $\e$, we meet $\ul{\gamma_1}$ first then $\ul{\gamma_2}$ and both $\ul{\gamma_1}, \ul{\gamma_2}$ lie on the same side of $\ul{l_3}$.
In general the arcs $\ul{l_1}, \ul{l_3}, \ul{b_5}, \ul{\gamma_1}, \ul{\gamma_2}$ may intersect each other.

\begin{theorem}[b-arc foliation change]\label{thm:folchange}
Assume that $R_1, R_2$ satisfy the above conditions {\bf(i)}--{\bf(iv)}. 
Suppose that the graph $\ul{l_1} \cup \ul{l_3} \cup \ul{l_5} \cup \ul{\gamma_1} \cup \ul{\gamma_2}$, see Figure~\ref{fig:tree}, is a %properly embedded 
tree in $S$. 
Then there is an ambient isotopy $\Phi_{\tau}:M \rightarrow M$ supported on $M \setminus B$ such that: 
\begin{enumerate}
\item
$F'=\Phi_{1}(F)$ admits an open book foliation $\F(F')$. If $\F(F)$ is essential, then so is $\F(F')$. 
%\marginpar{needs proof}
\item 
The region decomposition of $\F(F')$ contains regions $R'_{1}, R'_{2}$ (see Figure~\ref{fig:folchange}-b,c)
\begin{enumerate}
\item 
of type either $aa$, $ab$, or $bb$-tile,
\item 
$\sgn(R'_{1})=\sgn(R'_{2})=\e$ as in {\rm(ii)},
\item 
$\Phi_1(R_1 \cup R_2)= R_1' \cup R_2'$
\item 
$R'_{1}\cap R'_{2}$ is exactly one leaf $l$ of type $a$ or $b$,
\item
the numbers of the hyperbolic points connected to $v$ and $A$ by a singular leaf decrease both by one, though the total number of hyperbolic points remains the same. 
\end{enumerate}
\item 
$\Phi_t$ preserves the region decomposition of $F\setminus(R_{1} \cup R_{2})$. 
\item 
If $\partial F$ is non-empty $\Phi_{t}(\partial F)$ is a closed braid w.r.t. $(S, \phi)$ for all $t \in [0,1]$, i.e., $L=\partial F$ and $L' = \partial F'$ are braid isotopic. 
\end{enumerate}
\end{theorem}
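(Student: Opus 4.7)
The strategy is to realize $\Phi_t$ as an ambient isotopy supported in a 3-ball $N \subset M \setminus B$ chosen so that $F \cap N = R_1 \cup R_2$. Set $T := \ul{l_1} \cup \ul{l_3} \cup \ul{l_5} \cup \ul{\gamma_1} \cup \ul{\gamma_2} \subset S$. By the tree hypothesis, $T$ has a disk regular neighborhood $D \subset S$, which after shrinking may be taken to be disjoint from every leaf and every hyperbolic point of $\F(F)$ not already adjacent to $R_1 \cup R_2$. Using condition (iv) (or its stated alternative), pick a height interval $I = (\tau_1 - \delta, \tau_2 + \delta)$ containing no $t_k$, and let $N$ be the solid obtained by lifting $D$ into the pages parameterized by $I$. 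Then $N$ is a 3-ball, $N \subset M \setminus B$, and $F \cap N = R_1 \cup R_2$.

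\textbf{Local modification.} Inside $N \cong D \times I$, I replace the describing pair $(\ul{\gamma_1}, \ul{\gamma_2})$ by a new pair $(\ul{\gamma_1'}, \ul{\gamma_2'}) \subset D$ realizing one of the two trees on the right of Figure~\ref{fig:folchange}; the choice between (b) and (c) is forced by the position of $\ul b$ relative to $\ul{\gamma_1} \cup \ul{\gamma_2}$. These new describing arcs, placed at heights $\tau_1' < \tau_2'$ inside $I$ and assembled with the regular leaves along $\partial N$, produce a new surface $R_1' \cup R_2' \subset N$ with $(R_1' \cup R_2') \cap \partial N = (R_1 \cup R_2) \cap \partial N$. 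Since $N$ is a 3-ball and both $R_1 \cup R_2$ and $R_1' \cup R_2'$ are properly embedded disks agreeing on $\partial N$, an Alexander-type argument supplies an ambient isotopy of $N$ rel $\partial N$ taking one to the other. Extending by the identity outside $N$ gives the required $\Phi_t : M \to M$.

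\textbf{Verification and main obstacle.} Properties (1)--(4) then follow directly. The region types in (2a) and the shared-leaf type in (2d) are read off from the combinatorics of $(\ul{\gamma_1'}, \ul{\gamma_2'})$ in $D$; the sign condition (2b) is automatic because the signs of the hyperbolic points are determined by the $t$-parity of the adjacent regular leaves, which is unchanged; (2e) is immediate since $b$ is no longer a shared arc; (3) holds by construction of $N$; and for (4), if $\partial F \cap N$ is non-empty (only when some $l_k$ is an $a$-arc), it consists of arcs transverse to each page $D \times \{t\}$ and remains transverse throughout the straight-line isotopy, so $\partial F$ stays braided. Essentiality of $\F(F')$ is preserved because the tree hypothesis confines all modified b-arcs to $D$, so any new b-arc is homotopic in $D$ to a subarc of $T$ and therefore remains non-boundary-parallel in its page when $\F(F)$ is essential. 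The technical heart is Step 1: without the tree condition, arcs of $T$ could self-cross or be unavoidably pierced by other leaves of $\F(F)$, and no 3-ball $N$ with $F \cap N = R_1 \cup R_2$ would exist. The tree condition is precisely the combinatorial input that turns the local swap into a global ambient isotopy.
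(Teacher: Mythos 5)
Your overall strategy coincides with the paper's: both localize the two saddles in a cylinder over a planar regular neighbourhood of the tree $T=\ul{l_1}\cup\ul{l_3}\cup\ul{l_5}\cup\ul{\gamma_1}\cup\ul{\gamma_2}$ and then argue that, inside that cylinder, the order of the two saddles can be exchanged. The divergence is in how the ambient isotopy is produced. The paper embeds the planar neighbourhood $N\subset S$ into a disc $D^2$ with $\iota(\partial S\cap N)\subset\partial D^2$, observes that the induced foliation on $(\iota\times\mathrm{id})(R_1\cup R_2)\subset D^2\times[t_1,t_2]$ is a braid foliation, and then cites Birman--Finkelstein's Theorem~2.1, which supplies an isotopy with all of the required side conditions already built in.

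You instead assert that $R_1\cup R_2$ and the replacement $R_1'\cup R_2'$ are two properly embedded discs in the $3$-ball $N$ agreeing on $\partial N$, and invoke an ``Alexander-type argument'' to produce $\Phi_t$. This is where the gap lives. First, a generic Alexander isotopy between two discs rel boundary in a $3$-ball gives no control over how the rest of $F\cap N$, or any strands of $\partial F$ passing through $N$, move during the isotopy; conclusions (3) and (4) are constraints on every $\Phi_t$, not merely on $\Phi_1$, and an arbitrary rel-boundary isotopy does not keep braid arcs transverse to pages nor leave the region decomposition of $F\setminus(R_1\cup R_2)$ alone. You notice this in the final paragraph and appeal to a ``straight-line isotopy'' to argue transversality, but that claim is inconsistent with the Alexander step you actually used; you have not constructed a straight-line isotopy nor shown that one exists rel $\partial N$ between these two discs. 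Second, you have not verified the hypothesis of the Alexander argument: that the two surfaces really agree on $\partial N$. With $I=(\tau_1-\delta,\tau_2+\delta)$ the horizontal boundary $D\times\partial I$ meets each surface in its cross-section at heights $\tau_1-\delta$ and $\tau_2+\delta$, and it must be checked that the new describing arcs $\gamma_1',\gamma_2'$ reproduce exactly the same cross-sections there before the rel-$\partial N$ isotopy can even be posed. Both points are precisely the content that the paper outsources to Birman--Finkelstein's theorem, so without a substitute for that citation your argument has a genuine hole at its central step.
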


\begin{proof}
Let $N=N(\ul{l_1} \cup \ul{l_3} \cup \ul{l_5} \cup \ul{\gamma_1} \cup \ul{\gamma_2}) \subset S$ be a regular neighborhood of the graph $G=\ul{l_1} \cup \ul{l_3} \cup \ul{l_5} \cup \ul{\gamma_1} \cup \ul{\gamma_2}$. 
Since $G$ is a tree, $N$ is planar and there is an embedding $\iota: N\hookrightarrow D^{2}$ such that $\iota(\partial S \cap N) \subset \partial D^{2}$. See Figure~\ref{fig:embed}.
\begin{figure}[htbp]
\begin{center}
%\ShowGrid
\SetLabels
(0.18*.55)    $\ul{\gamma_1}$\\
(0.1*.27)    $\ul{b_1}$\\
(0.28*.83)   $\ul{l_1}$\\
(.2*.35) $\ul{l_3}$\\
(0.35*.78)    $\ul{l_5}$\\
(0.28*.43)   $\ul{\gamma_2}$\\
(0.52*.52)  \Large $\iota$\\
\endSetLabels
\strut\AffixLabels{\includegraphics*[scale=0.5, width=100mm]{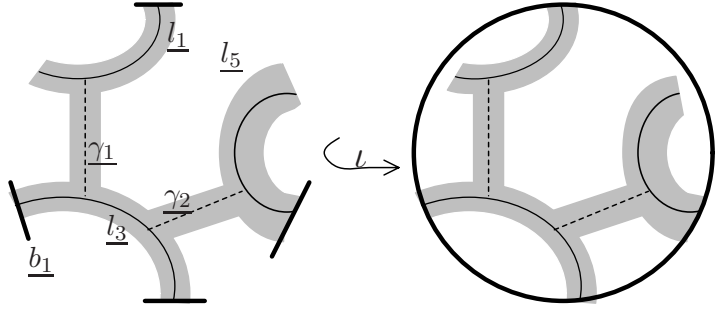}}
\caption{An embedding $\iota: N \hookrightarrow D^2$ when $\e=+1$.}%$R_1, R_2$ are bb-tiles and $\e=+1$.}
\label{fig:embed}
\end{center}
\end{figure}
We may assume that the region $R_1 \cup R_2$ is embedded in $N\times [t_1, t_2]$, hence also in $D^2 \times [t_1, t_2]$. 
The foliation on the surface $(\iota \times id)(R_1 \cup R_2) \subset D^2 \times  [t_1, t_2]$ induced by the family of discs $\{D^{2}  \times \{t\} | t \in [t_1, t_2]\}$ is the same as that on $\F(R_1 \cup R_2)$. 
Theorem 2.1 of Birman and Finkelstein \cite{bf} guarantees the existence of a desired isotopy $\Phi_t$.

Here we sketch the transition of $\Phi_t(R_1 \cup R_2)$ from $t=0$ to $t=1$ when $\e=-1$.  
Figure~\ref{fig:folc_1}-(a) depicts the interior of $R_1\cup R_2$, where the two saddles lie on the different pages $S_{\tau_1}$ and $S_{\tau_2}$ of the open book.
\begin{figure}[htbp]
\begin{center}
%\ShowGrid
\SetLabels
(0.08*1)   (a)\\
(0.4*1) (b)\\
(0.72*1)  (c)\\
(-0.02*0.1) $t$\\
(.15*1) $l_1$\\
(.1*.6) $l_5$\\
(.3*1) $l_3$\\
(.24*.07) $l_6$\\
(.28*.27) $l_2$\\
(.2*-.1) $l_4$\\
\endSetLabels
\strut\AffixLabels{\includegraphics[width=130mm]{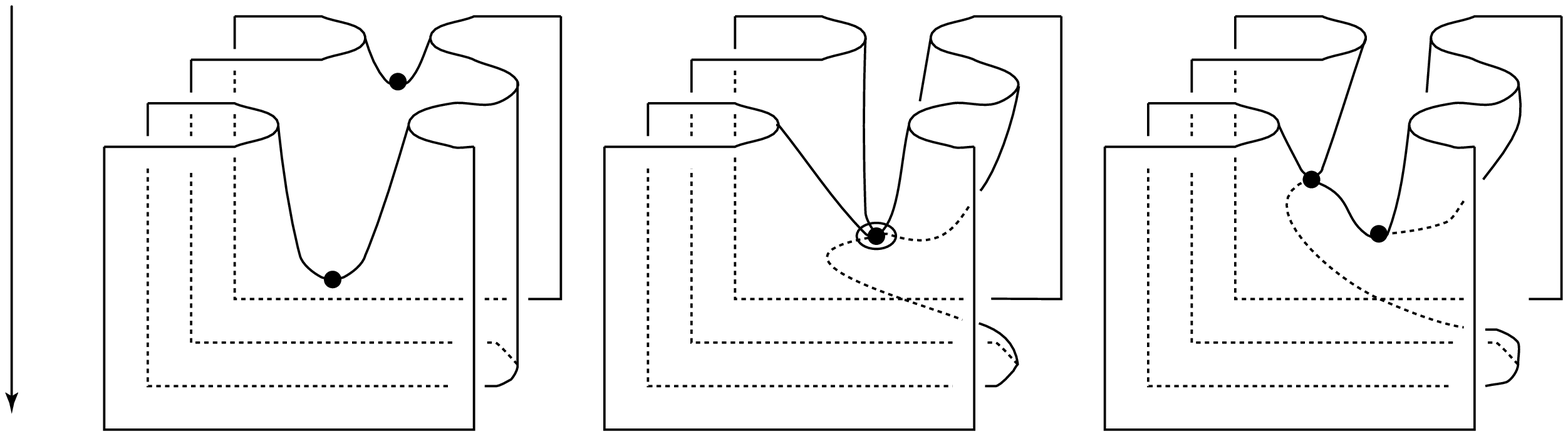}}
\caption{Isotopy $\Phi_t$ of $R_1\cup R_2$ that realizes b-arc foliation change.}
\label{fig:folc_1}
\end{center}
\end{figure}
We perturb the surface so that the saddles get closer until amalgamated to a monkey saddle, or a valence $6$ saddle, see Figure~\ref{fig:folc_1}-(b). 
By further perturbation the singular point splits into two hyperbolic points as shown in Figure~\ref{fig:folc_1}-(c).
The isotopy replaces $\gamma_1, \gamma_2$ (the top row of Figure \ref{fig:folc_0})
\begin{figure}[htbp]
\begin{center}
%\ShowGrid
\SetLabels
(0.32*.6)    $v$\\
(0.62*.6)    $v$\\
(.93*.6) $v$\\
(0.06*.82)   $A$\\%w$\\
(.69*.8) $A$\\
(0.12*.96) $l_1$\\
(0.27*.96) $l_3$\\
(0.37*.83)   $A$\\%w$\\
(0.6*.8)   $b$\\
(.5*.42) $l$\\
(.21*.8) $\gamma_1$\\
(.52*.71) $\gamma_2$\\
(.8*.91) $l_2$\\
(.8*.6) $l_4$\\
(.8*.8) $l_6$\\
(.12*.57)   $l_5$\\
(.44*.57)   $l_5$\\
(.21*.25) $\gamma_1'$\\
(.55*.25) $\gamma_2'$\\
\endSetLabels
\strut\AffixLabels{\includegraphics*[scale=0.5, width=120mm]{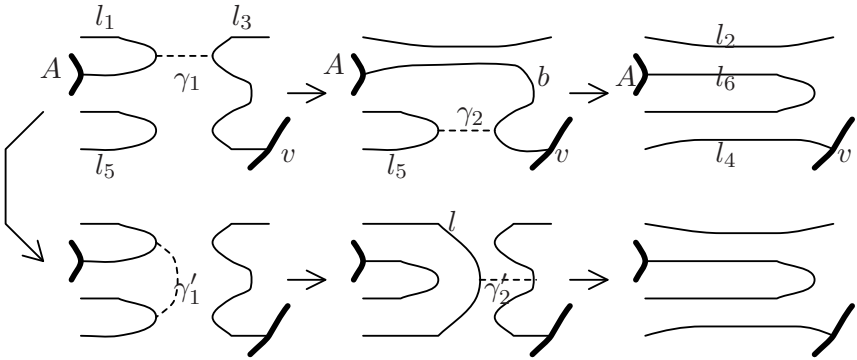}}
\caption{Replacing describing arcs where $\e = -1$.}
\label{fig:folc_0}
\end{center}
\end{figure}
with $\gamma_1', \gamma_2'$ (cf. the bottom row). 
This results in change in the open book foliation of $R_1 \cup R_2$ as depicted in Figure~\ref{fig:folchange}. 
For example, Figure~\ref{fig:folc_0} corresponds to the transition (a) $\rightarrow$ (b) in Figure~\ref{fig:folchange}.

Finally it is easy to see the assersion (1): if $\F(F)$ is essential, then so is $\F(F')$. 
If $\F(F')$ is inessential then the leaf $l=R_1' \cap R_2'$ must be inessential, which implies at least one of the leaves $l_i$ must be inessential. 
(In the case of Figure~\ref{fig:folc_0}, the leaves $l_3$ or $l_6$ is inessential.) 
%\marginpar{\tiny ``In the case Figure \ref{fig:folc_0}, $l$ is inessential implies that $l_{6}$ is inessential'' is restated.} 
%\marginpar{Added a short explanation why $\F(F')$ is essential}
\end{proof}

In general, checking the assumption of Theorem~\ref{thm:folchange} is not so simple, but there is one sufficient condition which is easier to check: %To state the sufficient condition, we introduce the following: 

%\begin{definition}[Separating b-arc]
%We say a b-arc leaf $b$ in the page $S_t$ is {\em separating} if $b$ separates the page $S_t$ into two connected regions. 
%\end{definition} 

%Clearly an inessential or boundary-parallel b-arc is separating. As we will see later, existence of separating b-arcs often adds useful constraints to open book foliations. See Lemmas \ref{lemma:degeneratebc} and \ref {lemma:sign} for example.

\begin{proposition}\label{prop:sufficient-conditions}
In addition to the conditions {\bf(i)}--{\bf(iv)}, assume further that the common b-arc $b$ of the tiles $R_1$ and $R_2$ is separating in the sense of Definition~\ref{def:separating}.
Then the graph $\ul{l_1} \cup \ul{l_3} \cup \ul{l_5} \cup \ul{\gamma_1} \cup \ul{\gamma_2}$ is a %properly embedded 
tree in $S$.
\end{proposition}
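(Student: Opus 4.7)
The plan is to verify the two defining properties of a tree: connectedness and acyclicity. Connectedness of $G = \underline{l_1} \cup \underline{l_3} \cup \underline{l_5} \cup \underline{\gamma_1} \cup \underline{\gamma_2}$ is immediate from the setup, since $\underline{\gamma_1}$ joins $\underline{l_1}$ to $\underline{l_3}$ and $\underline{\gamma_2}$ joins $\underline{l_3}$ to $\underline{l_5}$, so all five arcs are linked via $\underline{l_3}$. The real content of the proposition is therefore to show that $G$ contains no cycle in $S$.

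My approach is to exploit $\underline{b}$ itself as a separating curve in $S$. Since $b$ is separating in its page, $\underline{b}$ is a properly embedded arc separating $S$ into two subsurfaces $S^+$ and $S^-$ whose common elliptic endpoints $v, A$ lie on $\partial S$. By condition (iv), $l_1, l_3, l_5$ lie in a single page and are therefore pairwise disjoint leaves of the open book foliation there; consequently $\underline{l_1}, \underline{l_3}, \underline{l_5}$ are pairwise disjoint embedded b-arcs in $S$. Next, combining the standard hyperbolic-saddle model at the singular points of $R_1$ and $R_2$ with the embeddedness of these tiles, I would show that the projection of $R_1 \setminus b$ to $S$ sits in a regular neighborhood of $\underline{b}$ on one side (say $\overline{S^+}$), so that $\underline{l_1} \cup \underline{l_3} \cup \underline{\gamma_1} \subset \overline{S^+}$ meets $\underline{b}$ only at $v, A$; symmetrically, the projection of $R_2 \setminus b$ lies in $\overline{S^-}$, so $\underline{l_5} \subset \overline{S^-}$. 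Finally, the describing arc $\underline{\gamma_2}$, having been slid along $\underline{b}$ to end on $\underline{l_3}$, runs from $\overline{S^+}$ to $\overline{S^-}$ and so crosses the interior of $\underline{b}$ an odd number of times.

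With this local picture, acyclicity follows from a parity argument. Suppose for contradiction that $G$ contains a simple closed curve $C$. Because $\underline{b}$ separates $S$, any closed curve meets the interior of $\underline{b}$ an even number of times; but the only edge of $G$ meeting the interior of $\underline{b}$ is $\underline{\gamma_2}$, and it does so an odd number of times. Hence $C$ must avoid $\underline{\gamma_2}$, so $C \subset \underline{l_1} \cup \underline{l_3} \cup \underline{l_5} \cup \underline{\gamma_1}$. Since $\underline{l_5} \subset \overline{S^-}$ is disjoint from $\underline{l_1} \cup \underline{l_3} \cup \underline{\gamma_1} \subset \overline{S^+}$ in this subgraph (they can only meet at the elliptic points $v, A$, and the combinatorics of the hexagon force each of $v, A$ to be an endpoint of at most one of $\underline{l_1}, \underline{l_3}, \underline{l_5}$), the curve $C$ is forced into the ``Y''-shape $\underline{l_1} \cup \underline{\gamma_1} \cup \underline{l_3}$, which is manifestly a tree. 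This contradiction shows $G$ has no cycle.

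The main obstacle is the local-to-global step claiming that the projections of $R_1$ and $R_2$ sit on opposite sides of $\underline{b}$ in $S$. Justifying this requires combining the standard saddle model near each hyperbolic point (so that $\underline{l_1} \cup \underline{\gamma_1} \cup \underline{l_3}$ is isotopic rel $\partial S$ to $\underline{b} \cup \underline{l_2}$ inside a neighborhood of the projection of $R_1$, and likewise for $R_2$) with the separating hypothesis on $b$, which is precisely what prevents the projected neighborhood of $\underline{b}$ from wrapping around to the wrong side and producing spurious intersections. Once this is in place, the rest of the argument reduces to the parity count above.
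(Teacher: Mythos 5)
Your overall strategy -- using $\ul{b}$ as a separating arc in $S$ and sorting the five arcs of the graph $G$ onto its two sides -- is the same idea the paper uses, and your localization of $\ul{l_1}\cup\ul{l_3}\cup\ul{\gamma_1}$ to one side and $\ul{l_5}$ to the other matches the paper's picture. Where you diverge is the parity argument, and that is where the gap lies. You assert that after the slide $\ul{\gamma_2}$ "runs from $\overline{S^+}$ to $\overline{S^-}$ and so crosses the interior of $\ul{b}$ an odd number of times." This does not follow and, in the relevant model, is false: the describing arc $\gamma_2$ is slid \emph{along} $\ul{b}$ so that after the slide it remains on one side of $\ul{b}$; its foot lands on $\ul{l_3}$ at (or arbitrarily near) the shared endpoint $v=\ul{l_3}\cap\ul{b}$, which lies on $\partial D$ rather than in $\Int(D')$. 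So $\ul{\gamma_2}$ can be taken disjoint from $\Int(\ul{b})$ and your "$\gamma_2$ has odd intersection with $\ul{b}$" premise is not available. Once the crossing count is $0$, the parity test is vacuous: a hypothetical cycle through $\ul{\gamma_2}$ would also have zero (even) crossings with $\Int(\ul{b})$, giving no contradiction.

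The paper instead observes that $\ul{l_1}\cup\ul{l_3}\cup\ul{\gamma_1}$ is a tree lying (up to closure) on the $D'$-side of $\ul{b}$, that $\ul{\gamma_2}\cup\ul{l_5}$ is a tree lying on the $D$-side, and that these two trees can touch each other only along $\ul{b}$, where they meet at a single point. A union of two trees glued along a single point is a tree, and the proof closes. You could repair your proposal along these lines rather than via parity: show $\ul{\gamma_2}\cup\ul{l_5}\subset\overline{D}$, show it touches $\ul{b}$ only at the one point where $\gamma_2$ was slid to meet $l_3$, and argue that a simple cycle in $G$ would have to leave and re-enter $D'$ through two distinct points of $G\cap\ul{b}$, which the two trees do not supply. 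Note also that your last step, which claims removing $\ul{\gamma_2}$ leaves the Y-shape $\ul{l_1}\cup\ul{\gamma_1}\cup\ul{l_3}$, drops $\ul{l_5}$ without justification; what is actually left is a two-component forest, still acyclic, but you should say that rather than discard $\ul{l_5}$.
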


\begin{proof}
Suppose that $\e=+1$ (for the case $\e= -1$ a parallel argument holds).  
Let $S \setminus \ul{b} = D \sqcup D'$, where 
$D$ (resp. $D'$) is the region on the left (right) side of $\ul b$ as we walk along $\ul b$ from the positive elliptic point $A$ to the negative elliptic point $v$. 
See Figure~\ref{tree-and-disc}. 
\begin{figure}[htbp]
\begin{center}
%\ShowGrid 
\SetLabels
(0*.95) Region $D$\\ 
(.2*.67) $D$\\
(.28*0) $v$\\
(0.27*.96) $A$\\%w$\\
(0*.25) $B$\\%v_1$\\
(.46*.27) $C$\\%v_2$\\
(.21*.8) $\ul{l_1}$\\
(0.35*.6)    $\ul{l_5}$\\%2}$\\
(.1*.5) $\ul{\gamma_1}$\\
(.25*.36) $\ul{\gamma_2}$\\
(.14*.25) $\ul{l_3}$\\%b_1}$\\
%(.32*.3) $\ul{l_4}$\\%b_2}$\\
%
(.6*.95) Region $D'$\\ 
(.65*.27) $D'$\\
(.84*0) $v$\\
(0.82*.96) $A$\\%w$\\
(.59*.25) $B$\\%v_1$\\
(1.02*.27) $C$\\%v_2$\\
(0.72*.3)  $\ul{l_3}$\\%b_1}$\\
(.72*.6) $\ul{\gamma_1}$\\
(.82*.4) $\ul{\gamma_2}$\\
(.88*.51) $\ul{l_5}$\\%2}$\\
(.77*.8) $\ul{l_1}$\\
\endSetLabels
\strut\AffixLabels{\includegraphics*[scale=0.5, width=110mm]{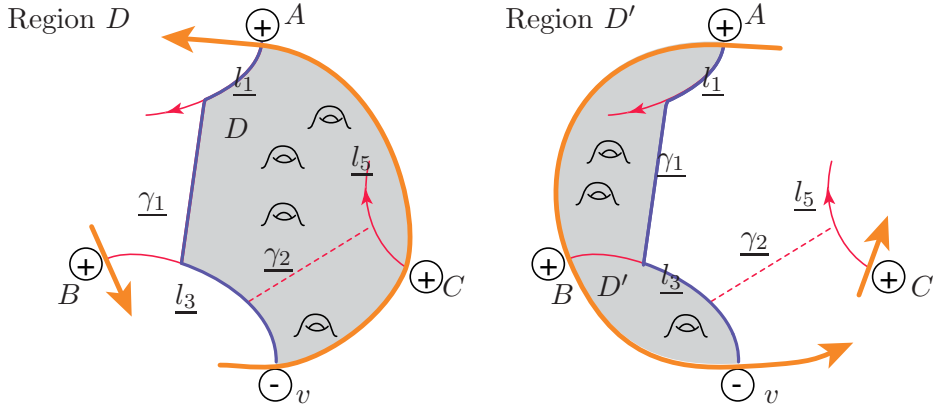}}
\caption{The regions $D$ and $D'$ when $\e=+1$. Vertices $B$ and $C$ may not be on the same binding component where $A$ and $v$ lie.}\label{tree-and-disc}
\end{center}
\end{figure}
Note that $v$ and $A$ lie on the same boundary component of $S$ because $b$ is separating. %\marginpar{\tiny added} 
The vertex $C$ (resp. $B$) lies on $\partial D$ (resp. $\partial D'$) but not necessarily on the same boundary component on which $v$ and $A$ lie. 
Since $l_1, \gamma_1$ and $l_3$ are contained in the same page $S_{t_1}$, their images $\ul{l_{1}}, \ul{l_3},  \ul{\gamma_{1}}$ form a tree in $S$, and the tree $\ul{l_{1}} \cup \ul{l_3} \cup \ul{\gamma_{1}}$ is disjoint from $\Int(D)$. 
The tree $\ul{\gamma_{2}} \cup \ul{l_5}$ is contained in $D$. 
Hence the graph $(\ul{l_{1}} \cup \ul{l_3} \cup \ul{\gamma_{1}}) \cup (\ul{\gamma_{2}} \cup \ul{l_5})$ is a tree in $S$. 
\end{proof}

The above argument implies the following: 

\begin{corollary}\label{claim p-iness}
%If $b$ is separating then $b_1$ or $b_2$ is also separating.
If $b$ is boundary parallel (so $D$ or $D'$ is a disc region) then $l_3$ or $l_4$ is boundary parallel. 
\end{corollary}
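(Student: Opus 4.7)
The plan is to specialize the dichotomy established in the proof of Proposition~\ref{prop:sufficient-conditions} under the stronger hypothesis that $\ul b$ bounds a disc on one side, then combine this with the elementary fact that a b-arc trapped in a disc region of the page is automatically boundary-parallel.

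\textbf{First step.} Unpack the hypothesis. By definition, $b \subset S_t$ is boundary-parallel in $S$ iff its projection $\ul b$ cobounds, together with some arc $\alpha \subset \partial S$, an embedded disc in $S$. Equivalently, exactly one of the two components of $S \setminus \ul b$, say $D$ without loss of generality (the case that $D'$ is the disc is entirely symmetric), is an open disc whose closure has boundary $\ul b \cup \alpha$. Treat the case $\e = +1$; the case $\e = -1$ runs in parallel as in Proposition~\ref{prop:sufficient-conditions}.

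\textbf{Second step.} Invoke from the proof of Proposition~\ref{prop:sufficient-conditions} the explicit containment statements: the sub-tree $\ul{l_1} \cup \ul{l_3} \cup \ul{\gamma_1}$ sits in $\overline{D'}$, and the sub-tree $\ul{\gamma_2} \cup \ul{l_5}$ sits in $D$. Apply the same argument to the page $S_{t_2}$ just above the two saddles, replacing the lower describing arcs $\gamma_1, \gamma_2$ with the upward describing arcs and the lower leaves $l_1, l_3, l_5$ with the upper leaves $l_2, l_4, l_6$. Since $\sgn(R_1) = \sgn(R_2) = \e$, the resulting combinatorics of the upper tree mirrors that of the lower tree, so the upper-page b-arc bounding $R_1$ (respectively $R_2$) sits in $\overline{D'}$ (respectively $\overline{D}$). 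Depending on which of $D, D'$ is the disc, the relevant arc among $l_3$ (lower, bounding $R_1$) or $l_4$ (upper, bounding $R_1$) then ends up properly embedded in the disc side.

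\textbf{Third step.} Finish with the local observation: if $l$ is a b-arc of some page lying in a disc $\Delta \subset S$ whose boundary is $\ul b \cup \alpha$ with $\alpha \subset \partial S$, then the two endpoints of $\ul{l}$ lie on $\partial \Delta \cap \partial S = \alpha \cup \{A, v\}$, so $\ul l$ cobounds with a sub-arc of $\alpha$ a sub-disc of $\Delta$, realizing $l$ as boundary-parallel in $S$. Applying this in each case of the dichotomy, either $l_3$ (when $D'$ is the disc) or $l_4$ (when $D$ is the disc) is boundary-parallel.

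The only subtle step is the bookkeeping in the second step, i.e., verifying that the upper-page analog of the tree argument in Proposition~\ref{prop:sufficient-conditions} places $l_4$ on the opposite side of $\ul b$ from $l_3$ in the disc-configuration of interest. This is routine given the hexagon combinatorics of $R_1 \cup R_2$ together with the sign condition $\sgn(R_1) = \sgn(R_2) = \e$, but it is the one place where we go beyond what is literally stated in Proposition~\ref{prop:sufficient-conditions}.
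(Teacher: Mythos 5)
Your plan matches the paper's: the corollary is stated as an immediate consequence of the proof of Proposition~\ref{prop:sufficient-conditions}, and your reading of it is the intended one. You correctly extract the containments $\ul{l_1}\cup\ul{l_3}\cup\ul{\gamma_1}\subset\overline{D'}$ and $\ul{\gamma_2}\cup\ul{l_5}\subset D$ from that proof, run the time-reversed analogue to place $\ul{l_4}$ (and $\ul{l_6}$) in $\overline{D}$, and finish with the standard observation that a properly embedded arc in a disc with both endpoints on $\partial S$ is boundary-parallel.

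One slip: in your concluding sentence of the second step you write ``$l_4$ (upper, bounding $R_1$),'' which contradicts the correct attribution you made in the sentence before it --- $l_4$ is an upper b-arc of $R_2$, which is why the time-reversed argument places $\ul{l_4}$ in $\overline{D}$ (not $\overline{D'}$). The conclusion survives once this is corrected. Also, ``apply the same argument to the page $S_{t_2}$'' is a fair shorthand, but note that it is really a reversal of the time parameter (which flips $\e$ and interchanges the roles of $D$ and $D'$), not a literal repetition on a page that actually contains $b$; the page above $\tau_2$ no longer contains $b$, so the directly stated form of Proposition~\ref{prop:sufficient-conditions} does not apply verbatim. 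You flag this yourself as the ``one place where we go beyond what is literally stated,'' which is accurate, and the paper is equally terse on this point.
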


\begin{remark}
The essential point in the above proof is that $\Int(\ul{\gamma_2})$ and %$\ul{l_3}$ 
$\ul{l_1}\cup \ul{l_3}$ 
are disjoint, so our problem is reduced to a problem in braid foliation theory, a theory for the {\em trivial} open book $(D^2, id)$. 
Suppose that $\ul{\gamma_{2}}$ is parallel to $\ul{\gamma_{1}}$ as in Figure \ref{fig:nestedsaddle}. 
\begin{figure}[htbp]
\begin{center}
%\ShowGrid
\SetLabels
(0.08*.48)    $\gamma_{1}$\\
(0.55*.2)   $\gamma_{2}$\\
(.05*.3) $l_3$\\
(.04*.63) $l_1$\\
(.25*.79) $l_5$\\
\endSetLabels
\strut\AffixLabels{\includegraphics*[scale=0.5, width=120mm]{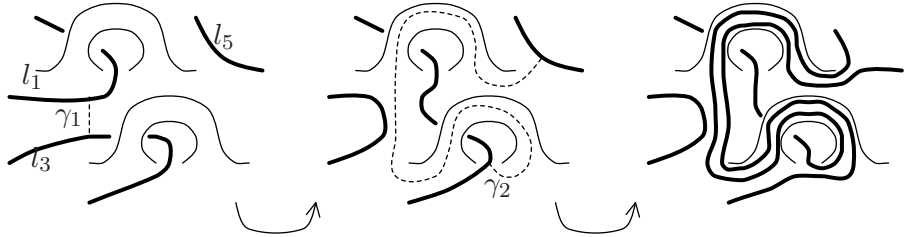}}
\caption{Nested saddles.}
\label{fig:nestedsaddle}
\end{center}
\end{figure}
The right sketch shows the saddles for $\gamma_1$ and $\gamma_2$ are {\em nested}. 
The saddle of $\gamma_{2}$ can exist only after the saddle of $\gamma_{1}$, so the trick of replacing the order of describing arcs (cf. Figure~\ref{fig:folc_0}) does not work. 

The existence of nested saddles is a unique feature of open book foliations.  
In braid foliation theory no b-arcs are strongly essential because the page $S$ is a disc, so by Proposition~\ref{prop:sufficient-conditions} if $\sgn(R_{1})=\sgn(R_{2})$ the graph $\ul{l_1} \cup \ul{l_3} \cup \ul{l_5} \cup \ul{\gamma_1} \cup \ul{\gamma_2}$ is always a tree in $S$ and nested saddles do not exist. 
\end{remark}

\begin{remark}
One might consider an {\em a-arc} foliation change 
under a similar setting where two tiles of the same sign are adjacent along an a-arc, instead of a b-arc. However, ``a-arc foliation change'' does {\em not} work in general. % under similar assumptions of Theorem~\ref{thm:folchange}.
This is why we call our operation {\em b-arc} foliation change, rather than simply calling it foliation change. We thank Bill Menasco for pointing this out and informing us importance of the separating condition on the b-arc $b$ in Proposition~\ref{prop:sufficient-conditions}.
\end{remark}

\section{Bypass move}\label{sec: Bypass move}

In the setting of a b-arc foliation change the two adjacent tiles $R_1, R_2$ must have the same sign.
Now a natural question arise: how about the case that two adjacent regions have {\em opposite} signs? 

It has been observed by Birman and Menasco in braid foliation theory that the opposite sign case is more complicated than the same sign case: 
They found that the complement of the hexagon region $F\setminus (R_1\cup R_2)$ or a closed braid may prevent the desired height exchange of the saddles (see \cite[Fig 11b]{BM4}). Thus validity of similar moves in open book foliation theory %can by no means be reduced to a local problem and it 
should reflect global feature of the surface $F$.

In this section we study when the `heights' of hyperbolic points of opposite signs are exchangeable.  
A short answer to this question would be ``when there exists a bypass-rectangle'', which we define shortly. 
We start by defining {\em dividing sets} whose idea comes from Giroux's {\em dividing sets} for convex surfaces \cite[\S 2]{Giroux-convex}, see also Honda's \cite[\S 3.1.3]{Honda}. 

\begin{definition}[Dividing set]\label{def:pos-neg-region}
Let $F \subset M_{(S, \phi)}$ be a surface admitting an open book foliation $\F(F)$ with no c-circles. (In \cite{ik1-1}  we prove that by finger moves we can always get rid of c-circles.) 
Let $\Gamma \subset F$ be a set of properly embedded arcs and circles that decompose $F$ into regions $F_+$ and $F_-$ such that 
\begin{itemize}
\item
$F \setminus \Gamma = F_+ \sqcup F_-$.
\item
As sets (forgetting orientations) $\Gamma = \partial F_+ \setminus \partial F = \partial F_- \setminus \partial F$. 
\item
The leaves of $\F(F)$ along $\Gamma$ are oriented out of the region $F_+$ and into $F_-$. 
\item 
$F_+$ contains all the positive singularities of $\F(F)$.
\item 
$F_-$ contains all the negative singularities of $\F(F)$.
\end{itemize} 
We call $\Gamma$ the {\em dividing set} of $\F(F)$. 
\end{definition} 

Given an open book foliation $\F(F)$ with no c-circles, the region $F_-$ can be identified, up to isotopy, with a collar neighborhood of the graph $G_{--}$ of $\F(F)$ (see \cite{ik1-1} for definition), hence $\Gamma$ is uniquely determined up to isotopy.

Next we define a bypass rectangle inspired by Honda's {\em bypass half-disc} \cite[\S 3.4]{Honda}. 

\begin{definition}[Bypass rectangle]

Let $\tilde \D\subset M_{(S, \phi)}$ be a disc admitting an open book foliation such that $\F(\tilde \D)$ is a degenerate aa-tile. 
Hence $\F(\tilde \D)$ contains two positive elliptic points and one hyperbolic point of sign $\varepsilon \in \{\pm1\}$. 
Let $\D \subset \tilde \D \subset M_{(S, \phi)}$ be a rectangle region such that 
\begin{enumerate}
\item $\D$ contains the hyperbolic pint of $\F(\tilde \D)$. 
\item The boundary $\partial \D$ consists of four piecewise smooth curves $\delta_1, \cdots, \delta_4$ such that 
\begin{enumerate}
\item
$\delta_1, \delta_3 \subset \partial \tilde \D \cap \partial \D$,  
\item 
$\delta_2, \delta_4$ are properly embedded arcs in $\tilde \D$. 
\end{enumerate} 
\item The leaves of $\F(\tilde \D)$ transversely intersect each $\delta_i$. The orientation of the leaves is pointing out of (resp. into) $\D$ along $\delta_1, \delta_3$ (resp. $\delta_2, \delta_4$). 
\end{enumerate} 
\begin{figure}[htbp]
\begin{center}
\SetLabels
(.45*0) $\delta_1$\\
(.45*1) $\delta_3$\\
(.27*.4) $\delta_4$\\
(.72*.4) $\delta_2$\\
(.52*.55) $\varepsilon$\\
(.27*.04) $p$\\
(.73*.04) $q$\\
(.27*1) $p'$\\
(.73*1) $q'$\\
\endSetLabels
\strut\AffixLabels{\includegraphics*[width=120mm]{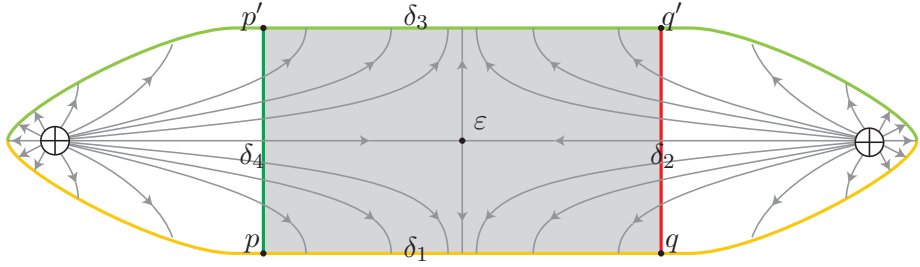}}
\caption{Bypass rectangle $\D$ (shaded) and degenerate aa-tile $\tilde \D$.}\label{fig:bypass}
\end{center}
\end{figure} 
See Figure~\ref{fig:bypass}. 
Denote the four vertices of $\D$ by $p, p', q, q'$. 
We call $\D$ a {\em bypass rectangle} of sign $\e:=\sgn(\D)$. 
\end{definition}

\begin{definition}[{\tt Type1}, {\tt Type2} hexagon $R$]
Let $F \subset M_{(S, \phi)}$ be a surface admitting an open book foliation. 
Suppose that $F$ contains a hexagon region $R$ consisting of two bb-tiles of opposite signs meeting along a b-arc as in Sketch (1) of Figure~\ref{fig:bypass-hexagon}. 
We name the vertices (elliptic points) $A, B, C, D, E, F$ counterclockwise. We may assume that 
$\sgn(A)=\sgn(C)=\sgn(E)=+1$ and $\sgn(B)=\sgn(D)=\sgn(F)=-1$. 
We require that the boundary b-arcs $\overline{AB}, \overline{CD}, \overline{EF}$ lie on the same page of the open book, and likewise $\overline{BC}, \overline{DE}, \overline{FA}$ lie on another same page. 
\begin{figure}[htbp]
\begin{center}
\SetLabels
(0*.73) (1) hexagon $R$\\ 
(.22*.74) $A$\\
(0*.63) $B$\\
(.085*.46) ${\bf p}$\\
(.37*.5) ${\bf q}$\\
(0*.35) $C$\\ 
(.22*.23) $D$\\
(.43*.35) $E$\\
(.43*.63) $F$\\
(.53*.58) {\tiny retrograde}\\
(.53*.4) {\tiny prograde}\\
(.53*.97) (2)\\% {\tt Type1}\\
(.76*1) $A$\\
(.56*.89) $B$\\
(.56*.62) $C$\\ 
(.76*.52) $D$\\
(1*.62) $E$\\
(1*.9) $F$\\
(.53*.03) (3)\\% {\tt Type2}\\
(.76*.46) $A$\\
(.56*.35) $B$\\
(.56*.08) $C$\\ 
(.74*0) $D$\\
(1*.08) $E$\\
(1*.35) $F$\\
\endSetLabels
\strut\AffixLabels{\includegraphics*[width=110mm]{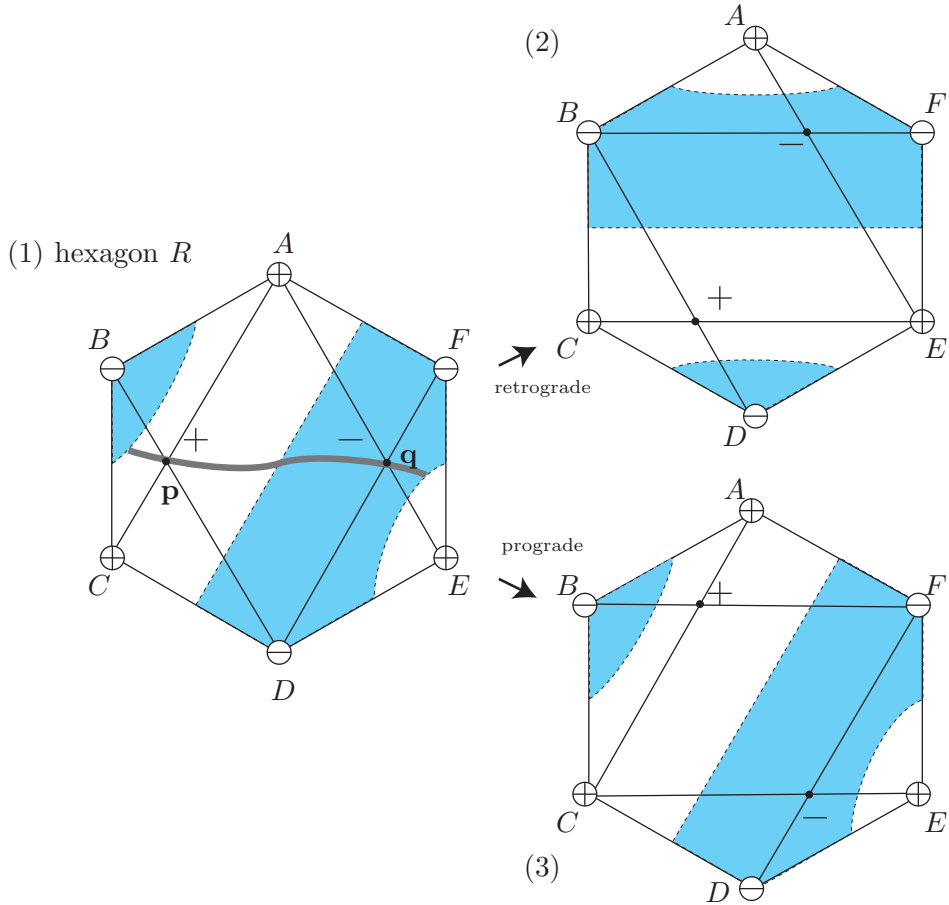}}
\caption{
(1) Original hexagon $R$. 
(2) Hexagon after retrograde bypass move.
(3) Hexagon after prograde bypass move.
Dashed arcs are dividing sets. Shaded regions are negative regions and unshaded regions are positive regions. }
\label{fig:bypass-hexagon}
\end{center}
\end{figure} 

Let  ${\bf p}, {\bf q}$ denote the two hyperbolic points of $R$. 
From now on we assume that $$\sgn({\bf p})=+1, \qquad \sgn({\bf q}) = -1.$$ 
(If $\sgn({\bf p})=-1$, $\sgn({\bf q}) = +1$ similar statements hold.) 
With this sign assumption there are two possible movie presentations realizing the open book foliation $\F(R)$ on $R$. See Figure~\ref{fig:bypass-movie}. We call them {\tt Type1} and {\tt Type2}. 
\begin{figure}[htbp]
\begin{center}
\SetLabels
(.05*.94) \small\tt{Type1}\\
(.05*.03) \small\tt{Type2}\\
(.11*.67) \small{$\bf p$}\\
(.54*.83) \small{$\bf q$}\\
(.54*.15) \small{$\bf q$}\\
(.07*.3) \small{$\bf p$}\\
\endSetLabels
\strut\AffixLabels{\includegraphics*[width=130mm]{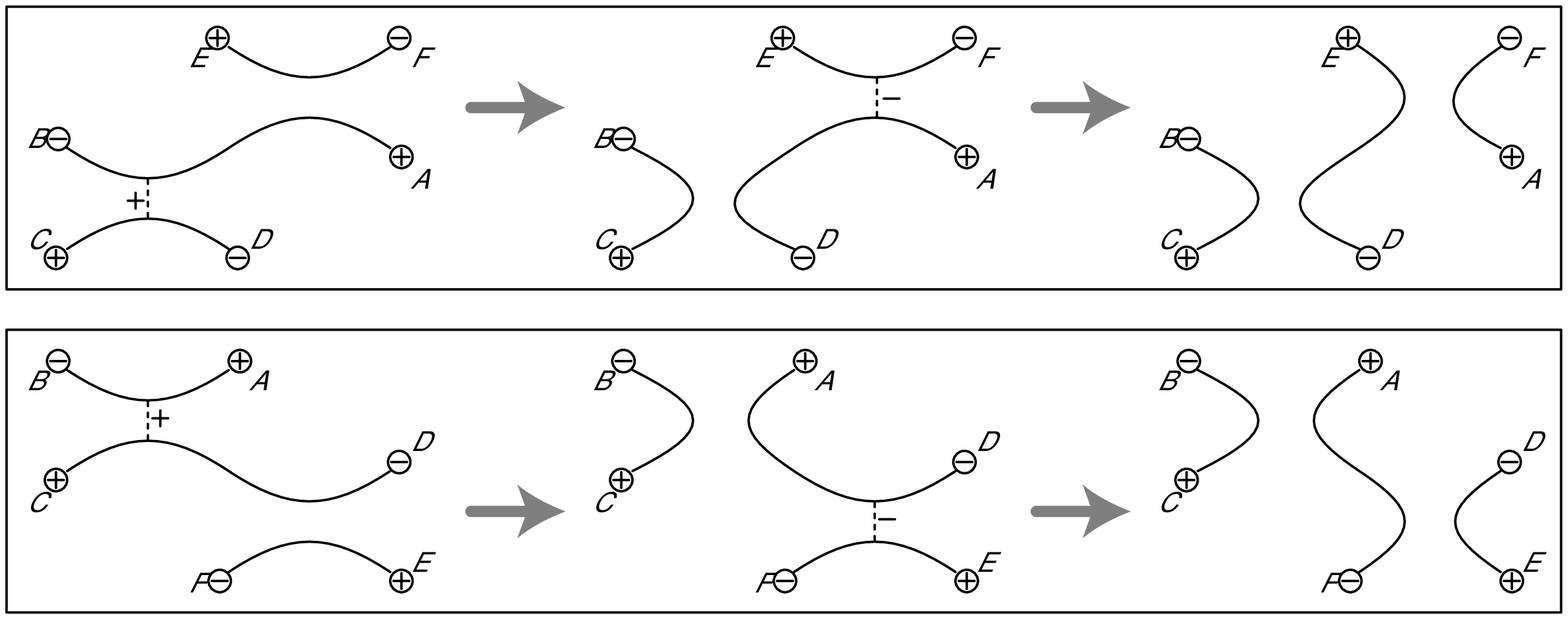}}
\caption{Movie presentations of {\tt Type1} and {\tt Type2} hexagon $R$.}\label{fig:bypass-movie}
\end{center}
\end{figure} 
\end{definition}

\begin{theorem}
If there exists a bypass rectangle $\D$ in $M \setminus F$ such that; 
\begin{enumerate}
\item
the union of arcs $\delta_1\cup \delta_2\cup \delta_4 \subset \partial \D$ is glued to the thick gray arc in Figure~\ref{fig:bypass-hexagon}-(1) that joins the dividing curves and contains {\bf p} and {\bf q}. 
\item
$p \in \D$ is identified with ${\bf p} \in R$
\item
$q \in \D$ is identified with ${\bf q} \in R$
\item 
$p$ and $q'$ live on the same page of the open book {\em(Figure~\ref{fig:bypass-movie-type1}-(1))}
\item
$p'$ and $q$ live on the same page of the open book {\em(Figure~\ref{fig:bypass-movie-type1}-(6))} 
\item $\sgn(\D)=\left\{
\begin{array}{cl}
+1 & \mbox{if $R$ is of {\tt Type1}}\\
-1 & \mbox{if $R$ is of {\tt Type2}}
\end{array}
\right.$
\end{enumerate} 
then by a local perturbation of $F$ supported on a neighborhood of $R \cup \D$, the open book foliation changes in the following ways: 
\begin{enumerate}
\item
If $R$ is of {\tt Type1}, $\F(R)$ changes as in the passage $(1) \rightarrow (2)$ of Figure~\ref{fig:bypass-hexagon} and the dividing set also changes. 
\item
If $R$ is of {\tt Type2}, $\F(R)$ changes as in the passage $(1) \rightarrow (3)$ of Figure~\ref{fig:bypass-hexagon} but the dividing set stays the same. 
\end{enumerate}
\end{theorem}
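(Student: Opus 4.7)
The plan is to construct an explicit ambient isotopy $\Phi_t : M \to M$, supported in a ball neighborhood $N = N(R \cup \D)$, that pushes $F$ across the bypass rectangle $\D$. This is a direct analogue of Honda's bypass attachment adapted to the open book setting; its effect on $\F(F)$ and on the dividing set is then read off from a movie presentation in $N$.

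First I would verify that $N(R\cup\D)$ is a 3-ball. Since $R$ is a hexagonal disc and $\D$ is a disc attached to $R$ along the arc $\delta_1\cup\delta_2\cup\delta_4$, the union is a piecewise smooth disc in $M$ (hypotheses (1)--(3) pin down the attaching arc on $R$ through ${\bf p}$ and ${\bf q}$, while (4),(5) encode that the corner pages of $\D$ are consistent with the pages of $\F(R)$). A regular neighborhood of this disc is a 3-ball $N$ in which the entire isotopy will be supported. Inside $N$, I would define $\Phi_t$ as a finger move using a bicollar $\D\times[-1,1]\subset M$ of $\D$: at time $t=0$, $F$ meets $\D\times\{0\}$ along $\delta_1\cup\delta_2\cup\delta_4$, and by time $t=1$ this attaching arc has been swept across $\D$ to a copy of $\delta_3$. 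Thus $F' = \Phi_1(F)$ equals $F$ outside $N$, while inside $N$ the hexagon $R$ has been replaced by the leftover portion of $R$ together with a parallel copy of $\D$.

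The open book foliation $\F(F')$ is then determined by the foliations of the two pieces and the gluing. Since $\Phi_t$ is supported in $M\setminus B$, the binding, the fibration $\pi$, and the open book structure are preserved. The sign hypothesis (6) is precisely the compatibility that allows the movie of $\F(F')$ to realize the specified rearrangement: for {\tt Type1}, where $\sgn({\bf p})=+1$ precedes $\sgn({\bf q})=-1$ in the original movie of Figure~\ref{fig:bypass-movie}, the positive saddle of $\D$ is inserted at the correct height between the pages of ${\bf p}$ and ${\bf q}$, and the new movie is exactly the one for Figure~\ref{fig:bypass-hexagon}-(2); for {\tt Type2}, the negative saddle of $\D$ yields Figure~\ref{fig:bypass-hexagon}-(3). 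A frame-by-frame comparison verifies that no saddle--saddle connections appear and the axioms ($\F$ i)--($\F$ iv) are satisfied. The new dividing set $\Gamma'$ is then computed from $\F(F')$ via Definition~\ref{def:pos-neg-region}: in {\tt Type1} the new positive saddle recombines the two dividing arcs of $R$ into the pattern (2), whereas in {\tt Type2} the new negative saddle preserves the topological type of $\Gamma$, yielding (3).

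The main obstacle is the movie analysis: one must check that hypotheses (4), (5), and (6) are \emph{exactly} what is needed for the finger move to yield a valid open book foliation with the claimed structure. The page-matching conditions (4) and (5) force the corners of $\D$ to live on the correct pages of the fibration, while an incompatible sign in (6) would force the saddle of $\D$ to appear on a page that either coincides with or wrongly orders the pages of ${\bf p}$ and ${\bf q}$, violating axiom ($\F$ iii) by creating a saddle--saddle connection. Thus the careful sign bookkeeping in the {\tt Type1}/{\tt Type2} dichotomy, together with a detailed comparison of movies before and after $\Phi_1$, is the central part of the argument.
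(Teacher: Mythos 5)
Your plan — identify a 3-ball neighborhood of $R\cup\D$, define a finger move carried by a bicollar of $\D$, then read off the new foliation and dividing set from a movie — is essentially the paper's argument, which likewise realizes the move by sliding the hexagon $R$ across the rectangle $\D$ and tracks the result in a movie presentation (Figures~\ref{fig:bypass-movie-type1} and \ref{fig:bypass-isotopy}). Two points of the paper's reasoning deserve to be made explicit in your write-up. First, the crux of why the slide is possible at all is that $\D\subset M\setminus F$ acts as a \emph{stopper}: the presence of a disc whose interior is disjoint from $F$ and from $L$, and whose boundary arc $\delta_1\cup\delta_2\cup\delta_4$ runs through the dividing set on $R$, certifies that no other sheet of $F$ or strand of $L$ (the $\bigstar$'s in Figure~\ref{fig:bypass-movie-type1}) blocks the passage of the hexagon across $\D$. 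This is precisely the obstruction that Birman and Menasco identified in the braid-foliation setting for opposite-sign adjacent saddles, and it is why existence of $\D$ must be taken as a hypothesis. Your finger move ``in a bicollar'' implicitly uses this, but the argument should name it. Second, your description of $F'$ as ``the leftover portion of $R$ together with a parallel copy of $\D$'' and the phrase ``the new positive saddle'' of $\D$ misrepresents what happens: no saddle of $\D$ is adjoined to $F'$, and the singularity count of $\F(R)$ is unchanged. Rather, the isotopy slides the existing hyperbolic points ${\bf p}$ along $\delta_4$ from $p$ to $p'$ and ${\bf q}$ along $\delta_2$ from $q$ to $q'$, so the new $R$ contains $\delta_3$ where the old one contained $\delta_1$. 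Hypothesis (6) is then exactly what ensures this height interchange can happen without forcing a saddle--saddle connection, which is the correct intuition behind your remark on axiom ($\F$~iii).
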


\begin{proof}
We study {\tt Type1} case carefully. 
Similar arguments work for {\tt Type2} case.

Figure~\ref{fig:bypass-movie-type1} shows a movie presentation of the bypass rectangle attached to a {\tt Type1} hexagon $R$ along $\delta_1, \delta_2, \delta_3$.
\begin{figure}[htbp]
\begin{center}
\SetLabels
(-.05*.12) (1)\\
(1.05*.12) (6)\\
(.15*.08) $p={\bf p}$\\
(.08*.02) $\bigstar$\\
(.35*.2) $q'$\\
(0*.02) $C$\\
(0*.21) $B$\\
(.17*.02) $D$\\
(.4*.02) $A$\\
(.23*.2) $E$\\
(.42*.2) $F$\\
(.63*.1) $p'$\\
(.96*.1) $q={\bf q}$\\
(.85*.12) $\bigstar$\\
(.79*.96) \tiny$(+)$\\
(.03*.5) $\delta_4$\\
(.08*.43) $\bigstar$\\
(-.05*.5) (2)\\
(1.05*.5) (5)\\
(.83*.55) $\bigstar$\\
(-.05*.88) (3)\\
(.12*.88) $\bigstar$\\
(1.05*.88) (4)\\
(.73*.9) $\bigstar$\\
(.14*.47) $\delta_1$\\
(.35*.58) $\delta_2$\\
(.2*.57) $\delta_3$\\
\endSetLabels
\strut\AffixLabels{\includegraphics*[width=120mm]{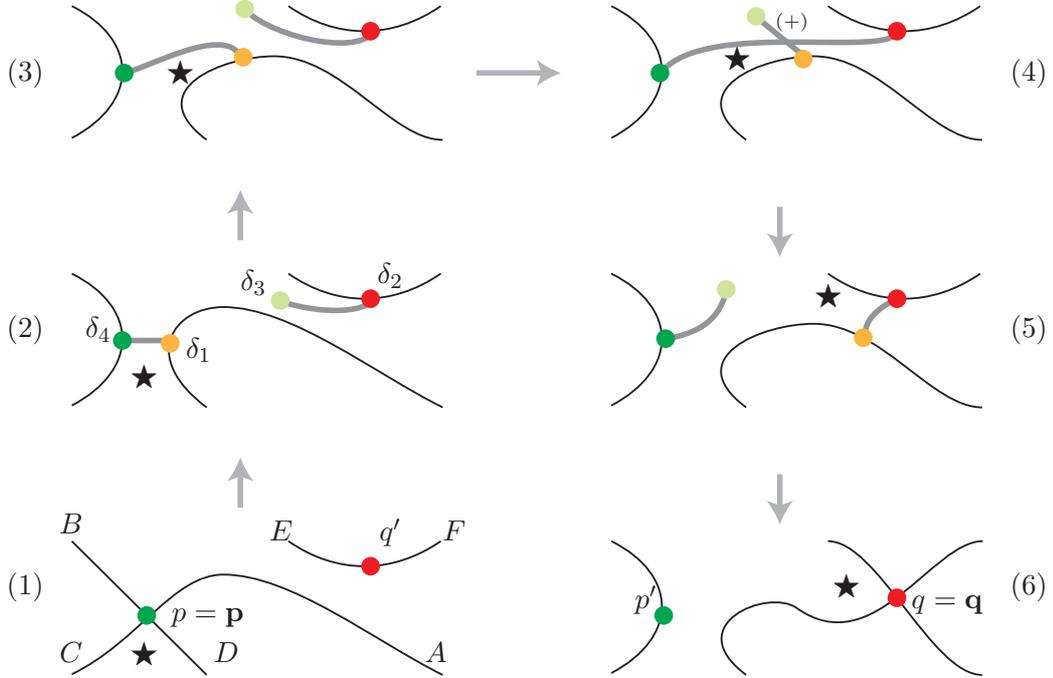}}
\caption{Movie presentation of a bypass rectangle $\D$ attached to a {\tt Type1} hexagon $R$. Orange dot $=\delta_1$, red $=\delta_2$, light green $=\delta_3$, green $=\delta_4$.}\label{fig:bypass-movie-type1}
\end{center}
\end{figure} 
Locally the bypass rectangle $\D$ and the hexagon $R$ are embedded as in the left sketch of Figure~\ref{fig:bypass-isotopy}. 
\begin{figure}[htbp]
\begin{center}
\SetLabels
(.25*-.1) \small{before perturbation $p={\bf p}, q={\bf q}$}\\
(.75*-.1) \small{after perturbation $p'={\bf p}, q'={\bf q}$}\\
(.1*.27) $p$\\
(.385*.67) $q$\\
(.07*.63) $p'$\\
(.42*.38) $q'$\\
(.08*.4) $\delta_4$\\
(.2*.4) $\delta_1$\\
(.2*.6) $\delta_3$\\
(.41*.53) $\delta_2$\\
(.25*.55) \tiny$\e=+1$\\
(.65*.27) $p$\\
(.9*.65) $q$\\
(.6*.6) $p'$\\
(.88*.3) $q'$\\
(.77*.55) \tiny$\e=+1$\\
\endSetLabels
\strut\AffixLabels{\includegraphics*[width=150mm]{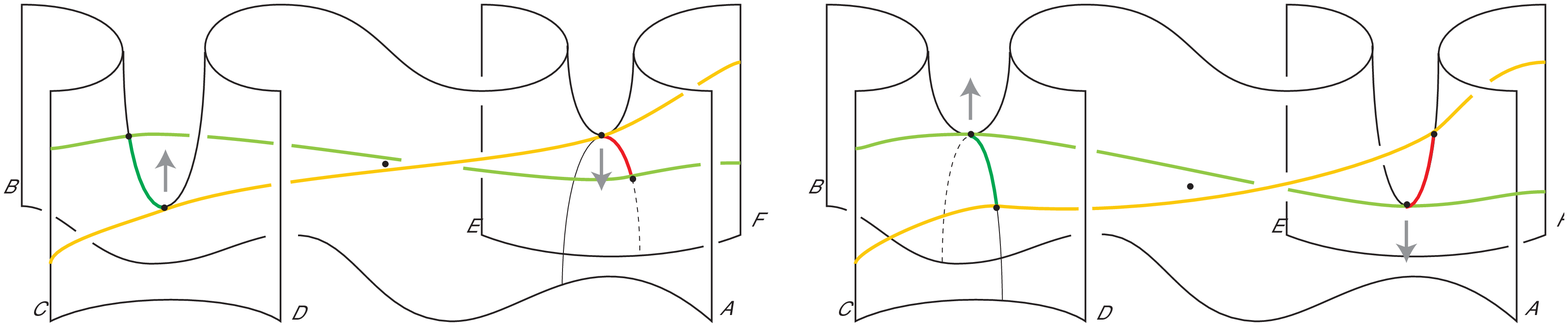}}
\caption{{\tt Type1} hexagon $R$ slid along the bypass rectangle $\D$.}\label{fig:bypass-isotopy}
\end{center}
\end{figure} 
The bypass plays a role of `stopper' that blocks other surfaces or braids (indicated by ``$\bigstar$'' in Figure~\ref{fig:bypass-movie-type1}) come from the region between $C$ and $D$, move through $p$ and $q$, and escape into the region between $A$ and $F$. 
Therefore we can slide the hexagon $R$ along the rectangle $\D$. 
This perturbation slides the hyperbolic point ${\bf p}$ along the arc $\delta_4$ from $p$ to $p'$. Similarly ${\bf q}$ is slid along $\delta_2$ from $q$ to $q'$.  
After the perturbation the arc $\delta_3$ sits on the new $R$ but $\delta_1$ no longer sits on the new $R$. 
See Figure~\ref{fig:bypass-move}.
\begin{figure}[htbp]
\begin{center}
%\ShowGrid
\SetLabels
(.5*.5) retrograde bypass move\\
(.53*.79) $\delta_1$\\
(.56*.95) $\delta_2$\\
(.455*.89) $\delta_3$\\
(.425*.74) $\delta_4$\\
(.57*.09) $\delta_1$\\
(.59*.245) $\delta_2$\\
(.49*.2) $\delta_3$\\
(.46*.01) $\delta_4$\\
\endSetLabels
\strut\AffixLabels{\includegraphics*[width=145mm]{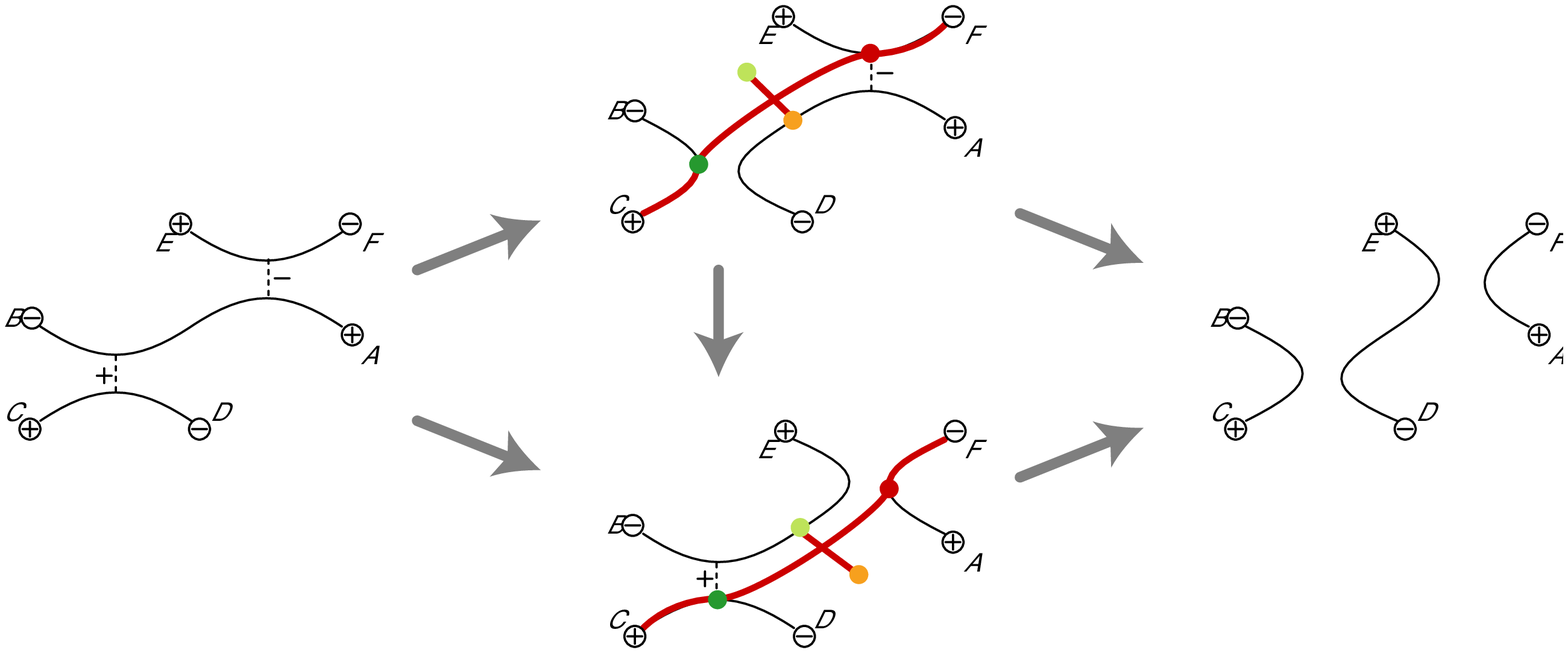}}
\caption{A bypass rectangle $\mathcal D$ (indicated by red arcs) before/after a retrograde bypass move applied on a {\tt Type1} hexagon. $\sgn(\mathcal D)=+1$. }\label{fig:bypass-move}
\SetLabels
(.5*.53) prograde bypass move\\
(.53*.82) $\delta_1$\\
(.56*.65) $\delta_2$\\
(.455*.92) $\delta_4$\\
(.45*.74) $\delta_3$\\
(.57*.08) $\delta_2$\\
(.57*.245) $\delta_1$\\
(.49*.19) $\delta_3$\\
(.46*.35) $\delta_4$\\
\endSetLabels
\strut\AffixLabels{\includegraphics*[width=145mm]{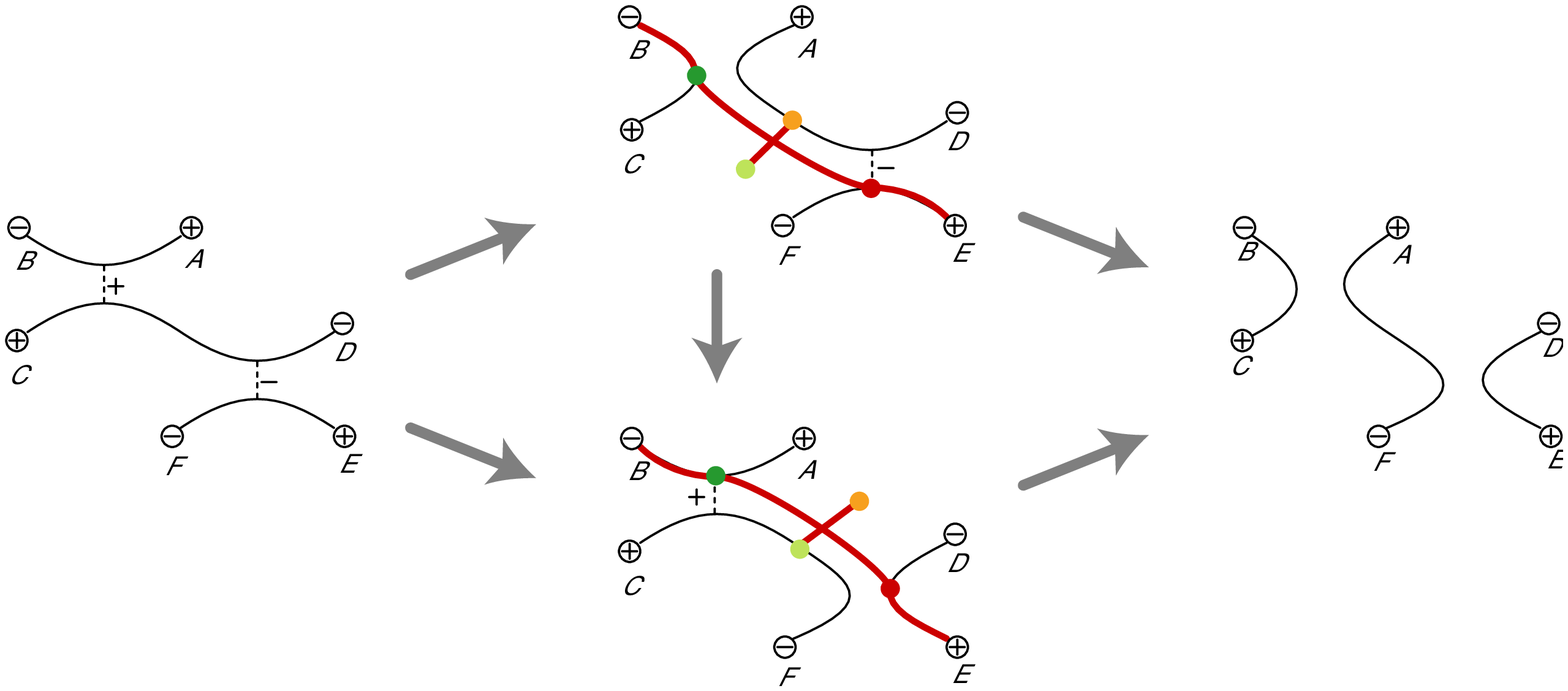}}
\caption{A bypass rectangle $\mathcal D$ before/after a prograde bypass move applied on a {\tt Type2} hexagon. $\sgn(\mathcal D) =-1$.}
\end{center}
\end{figure} 
\end{proof}

\begin{definition}[Retrograde/prograde bypass moves]
The above perturbation of {\tt Type1} hexagon region interchanges  the `heights' of the hyperbolic points {\bf p}, {\bf q}.  At one moment {\bf p}, {\bf q} have the same height. That is, {\bf p} and {\bf q} lay on the same page of the open book and they are joined by a singular leaf of the open book foliation. 
The singular leaf is oriented from {\bf q} to {\bf p}. 
Recall that $\sgn({\bf p})=+1$, $\sgn({\bf q}) = -1$.
Namely the singular leaf is oriented from a negative hyperbolic point to a positive hyperbolic point. 
Such a singular leaf is called a {\em retrograde saddle-saddle connection}.  
So we call the foliation change depicted in (1)$\rightarrow$(2) of Figure~\ref{fig:bypass-hexagon} {\em retrograde bypass move}.

On the other hand, for a corresponding perturbation of {\tt Type2} hexagon, the saddle-saddle connection is prograde, that is, the singular leaf is oriented from a positive hyperbolic point to a negative hyperbolic point. So we call the change in foliation depicted in (1)$\rightarrow$(3) of Figure~\ref{fig:bypass-hexagon} {\em prograde bypass move}. 
\end{definition}

We name the rectangle $\D$ {\em bypass} because our  retrograde bypass move and Honda's bypass attachment in convex surface theory yield exactly the same configuration change in dividing sets (compare Honda's \cite[Figure 6]{Honda} with our Figure~\ref{fig:bypass-hexagon}).

\begin{remark}
In his thesis \cite[p.123-124]{L}, LaFountain observes that Birman-Menasco's ``non-standard'' change of braid foliation that does change the graph $G_{++}$ is accomplished through a bypass.

In \cite{DP}, Dynnikov and Prasolov introduce a bypass for a rectangular diagram, a certain diagrammatic expression of (Legendrian) knots in the standard contact $S^{3}$. 
Their bypass can be turned into a Honda's bypass for the corresponding Legendrian link. 
%They proved various results on simplifications of rectangular diagrams and in particular the generalized Jones' conjecture. 

Although in \cite{L, DP} techniques of braid foliations are extensively used we remark that our bypass and their bypass have difference. For example, we have two types of bypass moves, prograde and retrograde.  
\end{remark}

\section{Exchange moves}
\label{sec:exchangemove}

In this section we study {\em exchange moves} of open book foliations and closed braids.

First we recall the exchange move in braid foliation theory, which is one of the most fundamental operations on braid foliations and has numerous applications to study of knots and links in $S^3$ and transverse links in the standard contact $S^3$ \cite{bm2}.

An {\em exchange move} is a move of a closed braid in $S^{3}=M_{(D^2, id)}$ as depicted in Figure~\ref{fig:exchangemove}. 
It is a composition of a positive stabilization, braid isotopy and a positive destabilization. 
Suppose that braids $L, L'$ are related to each other by an exchange move. 
The conjugacy classes of $L$ and $L'$ are different in general but $L$ and $L'$ have clearly the same braid index and the same transverse link type \cite{bm2}.
\begin{figure}[htbp]
 \begin{center}
% \ShowGrid
 \SetLabels
  (0.5*0.68) Exchange\\
  (0.5*0.58) move\\
  (0*.2) $L$\\
  (1*.2) $L'$\\
  (0.08*0.55)  \rotatebox{180}{\LARGE $A$}\\
    (0.66*0.55)  \rotatebox{180}{\LARGE $A$}\\
  (0.35*0.46) \LARGE $B$\\
  (0.92*0.46) \LARGE $B$\\
\endSetLabels
\strut\AffixLabels{\includegraphics*[scale=0.5, width=100mm]{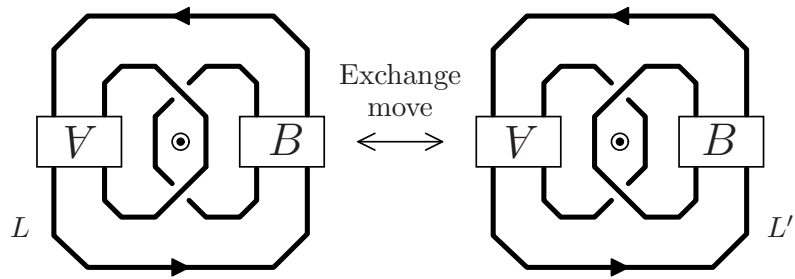}}
 \caption{Exchange move of a closed braid in $S^{3}$.}
 \label{fig:exchangemove}
  \end{center}
\end{figure}

Let $F$ be a Seifert surface of $L$ or an incompressible closed surface in $S^{3}\setminus L$. 
An exchange move of $L$ is related to an isotopy of the surface $F$. 
Consider a situation as depicted in Figure~\ref{fig:exchange2}-(1).  
\begin{figure}[htbp]
 \begin{center}
% \ShowGrid
 \SetLabels
  (0.05*0.95) (1)\\
  (0.4*0.95) (2)\\
  (0.75*0.95) (3)\\
  (.05*.35) (4)\\
  (0.7*0.35) (5)\\
  (0.31*0.61) $L$\\
  (0.26*0.44) $F$\\
  (1*0.44) $F'$\\
  (-.05*.05) $\F(F)$\\ 
  (1.05*.05) $\F(F')$\\
\endSetLabels
\strut\AffixLabels{\includegraphics*[scale=0.5, width=100mm]{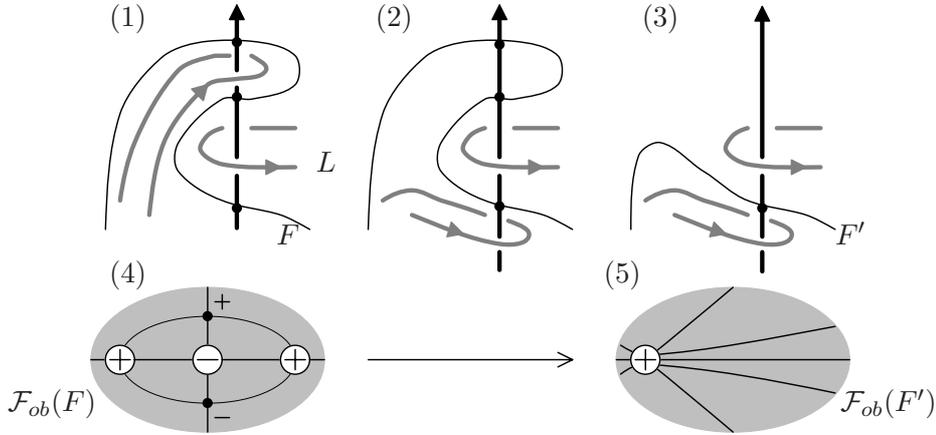}}
 \caption{Braid foliation before and after an exchange move of $L$.}
 \label{fig:exchange2}
  \end{center}
\end{figure}
We isotope $L$ as shown in passage $(1) \to (2)$ of Figure~\ref{fig:exchange2}. 
As a consequence inessential $b$-arcs appear in the braid foliation.  
Next we push down the surface to remove the inessential b-arcs (Figure~\ref{fig:exchange2}-(3)). 

The exchange move simplifies the braid foliation of $F$: It removes two elliptic points of opposite signs and two hyperbolic points of opposite signs in the (shaded) disc region of $F$, as described in Sketch (4)$\to$(5) of Figure~\ref{fig:exchange2} but it preserves the braid foliation on the rest of the surface.

The next theorem generalizes Birman-Menasco's exchange move. 

\begin{theorem}[Exchange moves in general open books]
\label{theorem:localex}
Let $L$ be a closed braid in $M_{(S,\phi)}$ and $F$ be a Seifert surface of $L$.
Assume that there exists a non-strongly-essential elliptic point $v \in \F(F)$ where exactly two regions $R_{1}$ and $R_{2}$ meet and satisfy the following: 
\begin{itemize}
\item 
$\sgn(R_1)=-\sgn(R_2)$.
\item
${\rm type} (R_1)= {\rm type} (R_2)=$ bb when $\sgn(v)=+1$.
\item
${\rm type} (R_1), {\rm type} (R_2) \in \{ ab,  bb\}$ when $\sgn(v)=-1$.
\end{itemize}
Then there exists an isotopy $\Phi_t: M\to M$ that takes $F=\Phi_0(F)$ to $F'=\Phi_1(F)$ and $L=\Phi_0(L)$ to $L' = \Phi_1(L)$ with the following properties: 
\begin{enumerate}
\item 
There exist discs $D\subset F$ and $D'\subset F'$ such that: 
\begin{enumerate}
\item 
$\F(F\setminus D)$ is topologically conjugate to $\F(F' \setminus D')$. 
\item 
$\F(D)$ has $\pm$ elliptic points and $\pm$ hyperbolic points as in Figure~\ref{fig:fol_exchange}-(1), but $\F(D')$ has no singularities as in Figure~\ref{fig:fol_exchange}-(3). 
\end{enumerate} 
\item 
$L$ and $L'$ have the same braid index w.r.t. the open book $(S, \phi)$ (but they may not be  isotopic in the complement of the binding). 
\item 
$L$ and $L'$ are transversely isotopic links in the contact structure $\xi_{(S, \phi)}$.
\end{enumerate}
\end{theorem}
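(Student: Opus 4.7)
The plan is to realise the desired isotopy inside a $3$-ball $B^{3}\subset M_{(S,\phi)}$ on which the open book restricts to a trivial open book $(D^{2},\id)$, and then appeal to Birman--Menasco's exchange move in braid foliation theory. The hypothesis that $v$ is non-strongly-essential says that every b-arc of $\F(F)$ ending at $v$ is boundary-parallel in its page. In particular, the common b-arc $b$ of $R_{1}$ and $R_{2}$ cobounds with an arc on the binding an embedded disc in the page $S_{t_{0}}$ containing it, and the same holds for the other b-arcs at $v$. Thickening these discs along the fibration direction produces a $3$-ball $B^{3}$ for which $B^{3}\cap S_{t}$ is a disc for every $t$, and which contains $R_{1}\cup R_{2}$, the elliptic point at the opposite end of $b$, the singular leaves at $v$, and the arcs of $L$ passing through the hyperbolic points of $R_{1}$ and $R_{2}$. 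Inside $B^{3}$ the foliation $\F(F\cap B^{3})$ is therefore a classical braid foliation and $L\cap B^{3}$ is a classical braid.

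\textbf{Applying the classical move.}
Inside $B^{3}$ the local configuration is exactly the one treated in \cite{BM4, bm2}: a valence-two vertex $v$ at which two regions of opposite signs meet, whose types match the hypothesis (both bb when $\sgn(v)=+1$, and each in $\{ab,bb\}$ when $\sgn(v)=-1$). Birman--Menasco's exchange move then yields an ambient isotopy $\Phi_{t}:B^{3}\to B^{3}$, fixed on $\partial B^{3}$, which collapses a disc $D\subset F\cap B^{3}$ carrying the four singularities (the elliptic point $v$, the elliptic point at the other end of $b$, and the two hyperbolic points of $R_{1},R_{2}$) onto a disc $D'$ foliated entirely by regular leaves, and which carries $L\cap B^{3}$ to a new braid of the same braid index. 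Extending $\Phi_{t}$ by the identity outside $B^{3}$ gives the global isotopy $\Phi_{t}:M\to M$.

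\textbf{Verifying the three conclusions.}
Assertion (1) is immediate from locality: since $\Phi_{t}$ is supported in $B^{3}$, the foliations $\F(F\setminus D)$ and $\F(F'\setminus D')$ are topologically conjugate, while $\F(D)$ and $\F(D')$ are as shown in Figure~\ref{fig:fol_exchange}. For (2), the braid index of a closed braid in $M_{(S,\phi)}$ equals its geometric intersection number with a regular page; since $\Phi_{t}$ is supported in $B^{3}$ and $B^{3}$ meets each page in a disc, the restricted move is the classical exchange move, which preserves braid index. For (3), the restriction of $\Phi_{t}$ to $B^{3}$ factors as a positive stabilization, a braid isotopy, and a positive destabilization, so $L$ and $L'$ are transversely isotopic with respect to the contact structure $\xi_{(S,\phi)}$ compatible with the open book.

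\textbf{Main obstacle.}
The technical heart of the proof is producing the $3$-ball $B^{3}$ with the required properties. The non-strongly-essential condition supplies boundary-parallel discs in individual pages, but one must verify that these discs can be arranged to vary consistently in $t$ and to bound a single $3$-ball meeting each page in a disc, without enclosing any extraneous singularities of $\F(F)$. It is here that the type restrictions on $R_{1},R_{2}$ are used: they control which singular leaves terminate at $v$ and therefore which leaves must be absorbed into $\partial B^{3}$. The asymmetry between the cases $\sgn(v)=+1$ and $\sgn(v)=-1$ (the former forbidding the $ab,ab$ configuration) is forced precisely by this bookkeeping, and checking both cases uniformly is the main case analysis.
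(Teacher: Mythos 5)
Your reduction-to-a-$3$-ball strategy captures the right instinct — use non-strong-essentiality to produce discs $\Delta_t$ in pages and hand the problem to braid-foliation technology — but there is a genuine gap in the reduction, which you yourself flag as the ``technical heart'' without actually resolving.

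First, the $3$-ball $B^3$ you describe does not exist in the form required. The b-arcs at $v$ exist only for $t$ strictly between the singular pages $S_{t_1}$ and $S_{t_2}$ carrying the hyperbolic points of $R_1, R_2$; outside $(t_1,t_2)$ there is no b-arc at $v$ to bound a disc. The natural region (the one the paper actually works with) is $X=\bigcup_{t_1<t<t_2}\Delta_t \cong D^2\times(t_1,t_2)$, which meets only a partial range of pages and is not a ball. You would need to close $X$ off at both ends without enclosing unwanted structure, and you give no construction.

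Second, and more importantly, the solid region obtained by thickening the $\Delta_t$ in general contains many other pieces of $F$ and other strands of $L$, since $F$ can intersect the discs $\Delta_t$ in arbitrarily complicated collections of arcs --- these are the nonempty boxes in the paper's movie presentation near $X$. One cannot choose the $\Delta_t$ to avoid them. Consequently, even if you had such a $B^3$, the intersection $F\cap B^3$ would not be the tidy disc $D$ on which the classical move is visible, but an a priori complicated sub-surface, and applying ``the'' classical exchange move as a black box, locally and relative to $\partial B^3$, is not something Birman–Menasco's theorem hands you. The substantive work in the paper is exactly the global isotopy (its Steps 1–2, invoking Birman–Menasco and Birman–Finkelstein results for surfaces in $D^2\times[0,1]$) that sweeps all of $F\cap X$ out of $X$; only after that do the push-along-$\Delta_t$ and flattening of Step 3 become available. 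Your proposal omits this step and, in the ``Main Obstacle'' paragraph, misattributes the difficulty to the type restrictions on $R_1, R_2$ (which merely rule out degenerate tile configurations at a valence-two vertex) rather than to the extraneous contents of $X$. Finally, for conclusion (3), the factorization of the isotopy of $L$ into positive stabilization, braid isotopy and positive destabilization is asserted but never produced; the paper's Step 4 writes this factorization explicitly, using the stabilization along $b_0$.
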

\begin{figure}[htbp]
\begin{center}
\SetLabels
(0*.95) (1) $\F(F)$\\ (.6*.95) (3) $\F(F')$\\ (.13*.1) (2)\\
(0.06*.8)    $w_D$\\
(.07*.53) $D$\\ (.37*.8) $w$\\ (.93*.8) $w$\\ (.62*.29) $w$\\
(.21*.58) $\tiny \e$\\ (.21*.83) $\tiny -\e$\\
(.13*.8) $R_2$\\ (.13*.65) $R_1$\\
(.18*.7) $\tiny v$\\
(0.65*.7)   $D'$\\
\endSetLabels
\strut\AffixLabels{\includegraphics*[scale=0.5, width=110mm]{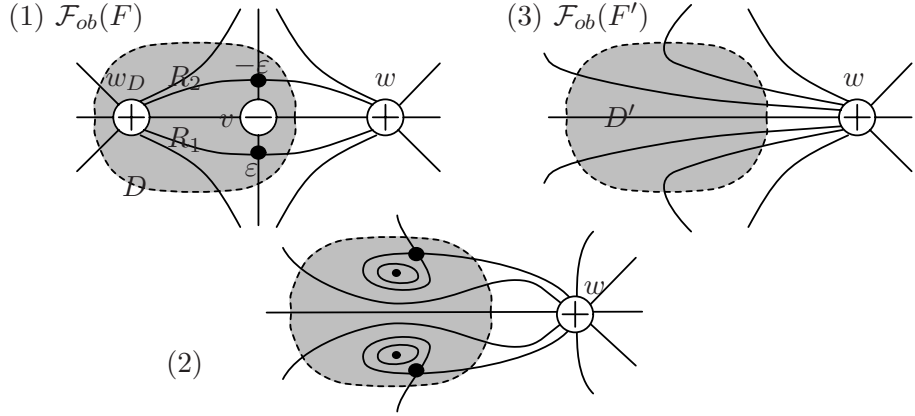}}\caption{An exchange move of open book foliation.} 
\label{fig:fol_exchange}
\end{center}
\end{figure}

\begin{definition}[Exchange moves]
\label{def:exchange}
{$ $}
\begin{enumerate}
\item[(i)]
We call the change $\F(F)\to\F(F')$ (in Thm~\ref{theorem:localex}) an {\em exchange move} of the open book foliation. 
\item[(ii)]
We call the braid move $L\to L'$ (in Thm~\ref{theorem:localex}) the {\em exchange move of $L$ subordinate to} the exchange move $\F(F)\to\F(F')$.
%\item[(iii)]
%We say that a braid $L$ {\em admits an exchange move} if there exists a Seifert surface $F$ of $L$ or a closed incompressible surface $F$ in $M\setminus L$ such that $\F(F)$ admits an exchange move. 
%\marginpar{Does $F$ have to be incompressible??}
\end{enumerate}
\end{definition}

\begin{remark}
With a slight modification a similar statement as in Theorem~\ref{theorem:localex} holds when $F$ is a closed surface in $M\setminus L$. 
In fact in \S\ref{sec:braid theorem} we study a case where $F \simeq S^2$ and $\F(F)$ admits exchange moves. 
\end{remark}

\begin{remark}
Although in braid foliation theory $\F(F)$ in necessarily  essential, here we do not require essentiality of $\F(F)$. 
\end{remark}

\begin{proof}
We may assume that $\sgn(v)=-1$ and $\sgn(R_1)=-\sgn(R_2)=+1$. Similar arguments hold for other cases. 
Here is an outline of the proof:
\begin{description}
\item[(Step 1)] We define the surface $F''$ embedded in $M_{(S, \phi)}$ such that $\F(F'') = \F(F)$ topologically conjugate.
\item[(Step 2)] We find a continuous family of surfaces $\{F_t\}$ embedded in $M_{(S, \phi)}$ such that $F_0=F$ and $F_1=F''$. 
\item[(Step 3)] We construct $F'$ from $F''$ and verify (1) and (2). 
\item[(Step 4)] We verify (3). 
\end{description}

\noindent{\bf(Step 1)} 
For $i=1, 2$ let $h_{i}$ denote the hyperbolic point in $R_{i}$ and let $S_{t_{i}}$ be the singular fiber that contains $h_i$. 
For $t \neq t_1, t_2$, let $b_{t} \subset S_t$ be the b-arc of $\F(F)$ that ends at $v$.
Since $v$ is not strongly essential,
we may assume that $0<t_{1}<0.5 < t_{2}<1$, and $b_{t}$ is non-strongly essential for $t \in (t_1, t_2)$, thus $b_{t}$ and a binding component cobound a disc $\Delta_t$ in $S_{t}$.
%In addition, since $\F(F)$ is essential $b_{t}$ is essential, so the braid $L$ pierces $\Delta_{t}$.
%\quad T=\left\{ b_t | t \in (t_2, 1) \cup [0, t_1) \right\} \cong D^2,
%see Figure \ref{fig:lex_1}. 
%
%\begin{figure}[htbp]
%\begin{center}
%\SetLabels
%(0.08*.5)    $b_{0}$\\
%(0.08*.1)   $S_{0}$\\
%(0.38*.1)   $S_{t_{1}}$\\
%(0.37*.88)   $S_{t_{2}}$\\
%(0.38*.6)   $h_{1}$\\
%(0.6*.4)   $T$\\
%(0.9*.56)   $X$\\
%(0.15*.7)   $L$\\
%(0.26*.62)   $v$\\
%(0.26*.25) $w$\\
%\endSetLabels
%\strut\AffixLabels{\includegraphics*[width=90mm]{lex_1.eps}}\caption{Regions $X$ and $T$.}\label{fig:lex_1}
%\end{center}
%\end{figure}
%
%Let $T$ be a tubular neighborhood of the $b$-arc $b_{0}$. 
Figure~\ref{movie-F} 
\begin{figure}[htbp]
\begin{center}
\SetLabels
(.07*1) $S_{t_1 - \epsilon}$\\ (.4*1) $S_{t_1}$\\ (.8*1) $S_{0.5}$\\ 
(.07*0) $S_{t_2 - \epsilon}$\\ (.4*0)   $S_{t_2}$\\ (.77*0) $S_0$\\
(.29*.62)   $v$\\ (.30*.82)   $w_D$\\ (-.02*.7) $w$\\ 
(0.13*.76)$\scriptscriptstyle{(+)}$\\
(.14*.23)$\scriptscriptstyle{(-)}$\\
(.07*.8)$A_1$\\ (.23*.72)$B_1$\\ (.13*.62)$D_1$\\ (.18*.89)$E_1$\\
(.43*.8)$A_2$\\ (.59*.72)$B_2$\\ (.49*.62) $D_2$\\ (.54*.89)$E_2$\\
(.79*.8)$A_3$\\ (.95*.72)$B_3$\\ (.862*.76)$C_3$\\
(.07*.26)$A_4$\\ (.23*.18)$B_4$\\ (.13*.13)$D_4$\\ (.155*.33)$E_4$\\ 
(.43*.26)$A_5$\\ (.59*.18)$B_5$\\ (.49*.07) $D_5$\\ (.54*.34)$E_5$\\ 
(.84*.24)$B_6$\\ (.85*.075) $D_6$\\ (.9*.34)$E_6$\\ (.9*.1) $b_0$\\
\endSetLabels
\strut\AffixLabels{\includegraphics*[width=140mm]{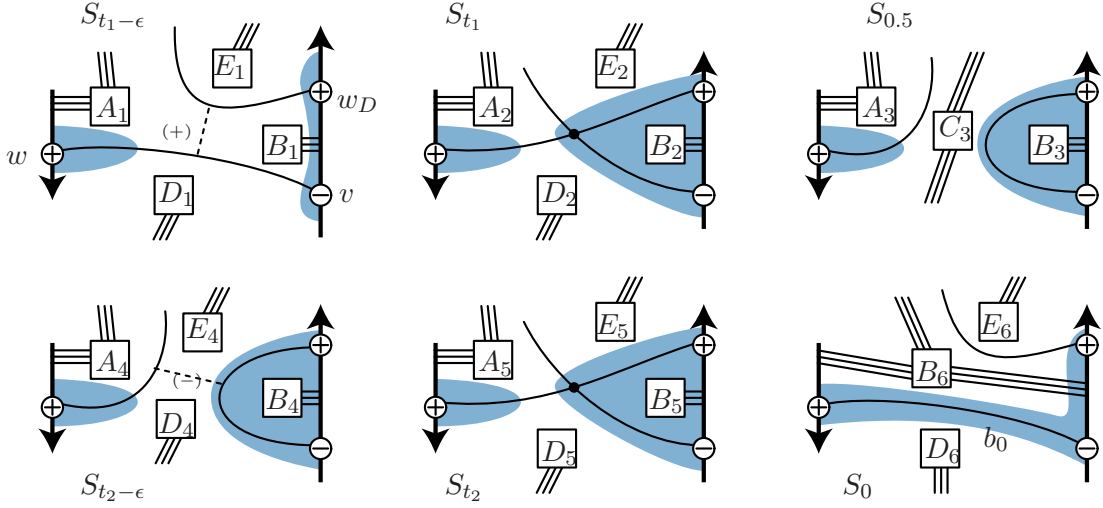}}
\caption{A movie presentation of $F$ near $X\cup T$.}
\label{movie-F}
\end{center}
\end{figure}
shows a movie presentation of $F$ in a neighborhood of $R_1\cup R_2$, where $\epsilon>0$ is a very small number, $w, w_D$ denote the positive elliptic points from which  $b_0, b_{0.5}$ start, and each box may contain  part of a-arcs, b-arcs, c-circles, and a singular leaf, or be empty. 
Triple parallel arcs represent some number (possibly zero) of  arcs, and the shaded regions indicate %the neighborhood $\nu(X\cup T)$ of $X\cup T$. 
a neighborhood of $X \cup b_0$ where 
$$
X= \bigcup_{t_{1}< t < t_{2}} \Delta_{t} \cong D^{2} \times (t_{1},t_{2}). 
$$  

We define the surface $F''$ by replacing the part of $F$ described in Figure~\ref{movie-F} by the description in Figure~\ref{movie-F''}, 
\begin{figure}[htbp]
\begin{center}
\SetLabels
(.*1) $S_{t_1 - \epsilon}$\\ (.5*1) $S_{t_1}$\\ (.85*1) $S_{0.5}$\\ 
(.0*0) $S_{t_2 - \epsilon}$\\ (.4*0)   $S_{t_2}$\\ (.77*0) $S_0$\\
(.29*.62)   $v$\\ (.30*.82)   $w_D$\\ (-.02*.7) $w$\\ 
(0.13*.76)$\scriptscriptstyle{(+)}$\\
(.14*.23)$\scriptscriptstyle{(-)}$\\
(.07*.87)$A_1$\\ (.07*.77)\rotatebox[origin=c]{180}{$B_1$}\\ (.13*.62)$D_1$\\ (.18*.89)$E_1$\\
(.43*.87)$A_2$\\ (.43*.77)\rotatebox[origin=c]{180}{$B_2$}\\ (.49*.62) $D_2$\\ (.54*.89)$E_2$\\
(.79*.87)$A_3$\\ (.79*.77)\rotatebox[origin=c]{180}{$B_3$}\\ (.862*.76)$C_3$\\
(.07*.33)$A_4$\\ (.07*.24)\rotatebox[origin=c]{180}{$B_4$}\\ (.13*.13)$D_4$\\ (.15*.33)$E_4$\\ 
(.43*.33)$A_5$\\ (.43*.24)\rotatebox[origin=c]{180}{$B_5$}\\ (.49*.07) $D_5$\\ (.54*.34)$E_5$\\ 
(.815*.29)$B_6$\\ (.85*.075) $D_6$\\ (.9*.34)$E_6$\\ 
\endSetLabels
\strut\AffixLabels{\includegraphics*[width=140mm]{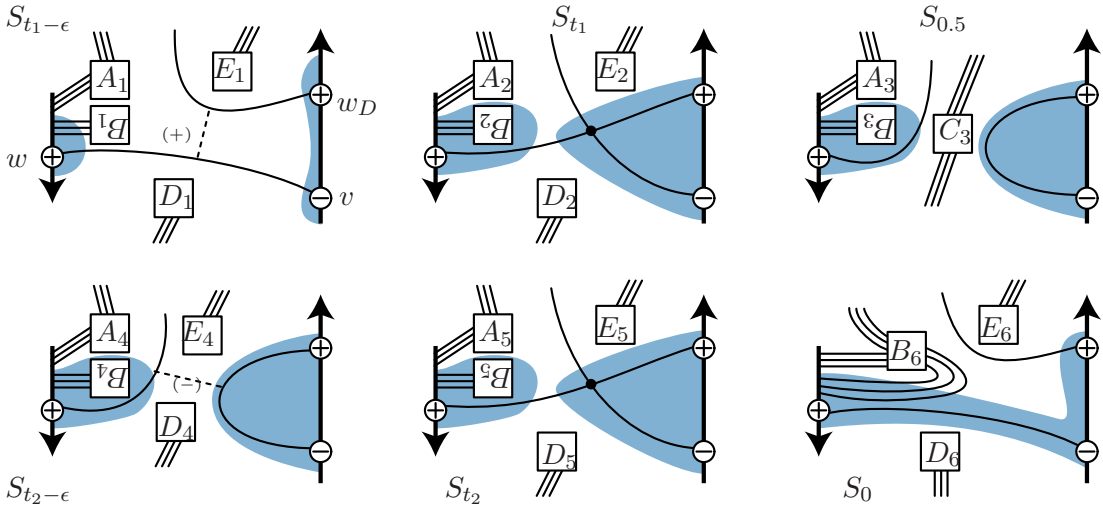}}
\caption{(Step 1) A movie presentation of $F''$ near $X\cup T$.}\label{movie-F''}
\end{center}
\end{figure}
which is obtained by moving the  boxes $B_1,\cdots,B_6 $ and their foot between $v$ and $w_D$ to the negative side of $w$.

By the construction, $F$ and $F''$ are homeomorphic and their open book foliations $\F(F)$ and $\F(F'')$ are topologically conjugate. 
If the open book foliation $\F(F)$ is essential then $B_i$ are non-empty, but in general all of $B_i$ can be empty and in that case $F''=F$. 
In any case the braids $L=\partial F$ and $\partial F''$ have the same braid index. 

\noindent{\bf(Step 2)}
By \cite[Lemma 4]{BM4} and \cite[Lemma 5]{BM5}, (see also \cite[Theorem 2.2, Figure 2.19]{bf}), there is an isotopy $\Phi'_t:M \to M$ that takes $F\cap X$ out of $X$ and move along $b_0$ down to the other tip as described in $(1)\to (6)$ of Figure~\ref{fig:local_braid}. 
We have $\Phi'_1(F)=F''$. 
%This isotopy is described in Figure \ref{fig:lex_2}: we slide the braided strands inside $X$ to the neighborhood of $w$ along $b_0$.
%
%\begin{figure}[htbp]
%\begin{center}
%\SetLabels
%\endSetLabels
%\strut\AffixLabels{\includegraphics*[scale=0.5, width=110mm]{lex_2.eps}}
%\caption{Sliding the braided strands along $b_0$ (Step 2). }
%\label{fig:lex_2}
%\end{center}
%\end{figure}

\noindent{\bf(Step 3)}
By the construction of $F''$ the b-arcs $b_t$ of $\F(F'')$ for $t \in (t_{1},t_{2})$ are inessential (see Figure~\ref{movie-F''}). 
Push $F''$ along a disc $\Delta_t$ for some $t\in (t_1, t_2)$ as shown in Figure ~\ref{fig:removeb} to remove the inessential b-arcs and the elliptic points $w$ and $v$. Call the surface $F'''$. 
%(We remark that the interior of the disc $\Delta_t$ might intersects with $F''$, so we need additional argument to pushing $F''$ along $\Delta'_t$. see \cite{ik2} for detailed argument of how to remove inessential b-arcs.) 
%
\begin{figure}[htbp]
\begin{center}
\SetLabels
(.18*.75) $w_D$\\
(.08*.25) $v$\\
(0.25*0.58) $\Delta_t$\\ 
(.3*.78) $F''$\\
(.85*.78) $F'''$\\
\endSetLabels
\strut\AffixLabels{\includegraphics*[scale=0.5, width=80mm]{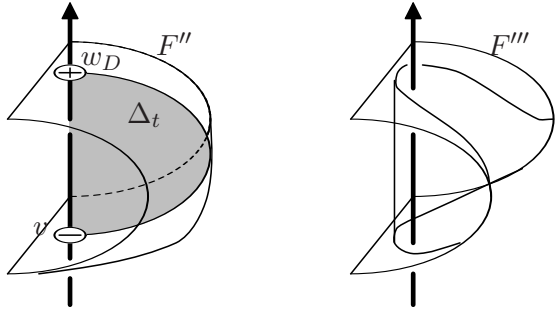}}
\caption{(Step 3) Push $F''$ along $\Delta_t$.}\label{fig:removeb}
\end{center}
\end{figure}

The surface $F'''$ does not admit an open book foliation as it has two local extrema, see Figure~\ref{fig:fol_exchange}-(2). 
Flatten the two pairs of local extremum and saddle tangency and call the resulting surface $F'$, whose open book foliation is depicted in Figure~\ref{fig:fol_exchange}-(3). 
This concludes the statement (1). 

If the boxes $B_1,\ldots,B_6$ are empty, the surface change $F \to F'$ is (the inverse of) what is called a {\em finger move} in \cite{ik1-1}.  
% \marginpar{added} 

During the process $F'' \stackrel{\mbox{\tiny pushing}}{\longrightarrow} F''' \stackrel{\mbox{\tiny flatten}}{\longrightarrow} F'$ the boundary is fixed, so $L'=\partial F'$ and $\partial F''$ have the same braid index.  
With the observation at the end of Step 1, we verify the statement (2).

\noindent{\bf(Step 4)}
It remains to show the statement (3), that is, $L=\partial F$ and $L'=\partial F'$ are indeed transverse isotopic. 
So far we have three isotopies; $\Phi'_t$, pushing along $\Delta_{t}$, and the flattening. 
Denote the concatenation of the three by $\Phi_t:M \to M$, hence $\Phi_0(F)=F$ and $\Phi_0(F)=F'$. 
Note that $L_t=\Phi_t(L)$ may not be in a braid position relative to the open book for some $t\in (0,1)$.

To prove $L$ and $L'$ are transversely isotopic, we relate them by a sequence of positive (de)stabilizations and braid isotopy, all of which operations preserve transverse link types. 
We use an idea in Birman and Menasco's paper \cite[p.421]{bm1}: 
First we positively stabilize the part of $L$ that goes through $X$ along the $b$-arc $b_{0}$.
See Figure \ref{fig:local_braid}, 
\begin{figure}[htbp]
\begin{center}
%\ShowGrid
\SetLabels
(0*.98) (1)\\
(.5*.98) (2)\\
(.88*.98) (3)\\
(0*.4) (4)\\
(.5*.4) (5)\\
(.88*.4) (6)\\
(0.3*.98)  stabilization\\
(0.5*0.5)  braid isotopy\\
(0.67*.98)  braid isotopy\\
(0.67*.4)  destabilization\\
(0.3*.4)  braid isotopy\\
(.12*.77) $b_0$\\ 
(.27*.87) $X$\\
(1.02*.3) $X$\\
(.22*.82) $v$\\
(.22*.72) $w$\\
(.22*.92) $w_D$\\
\endSetLabels
\strut\AffixLabels{\includegraphics*[scale=0.5, width=120mm]{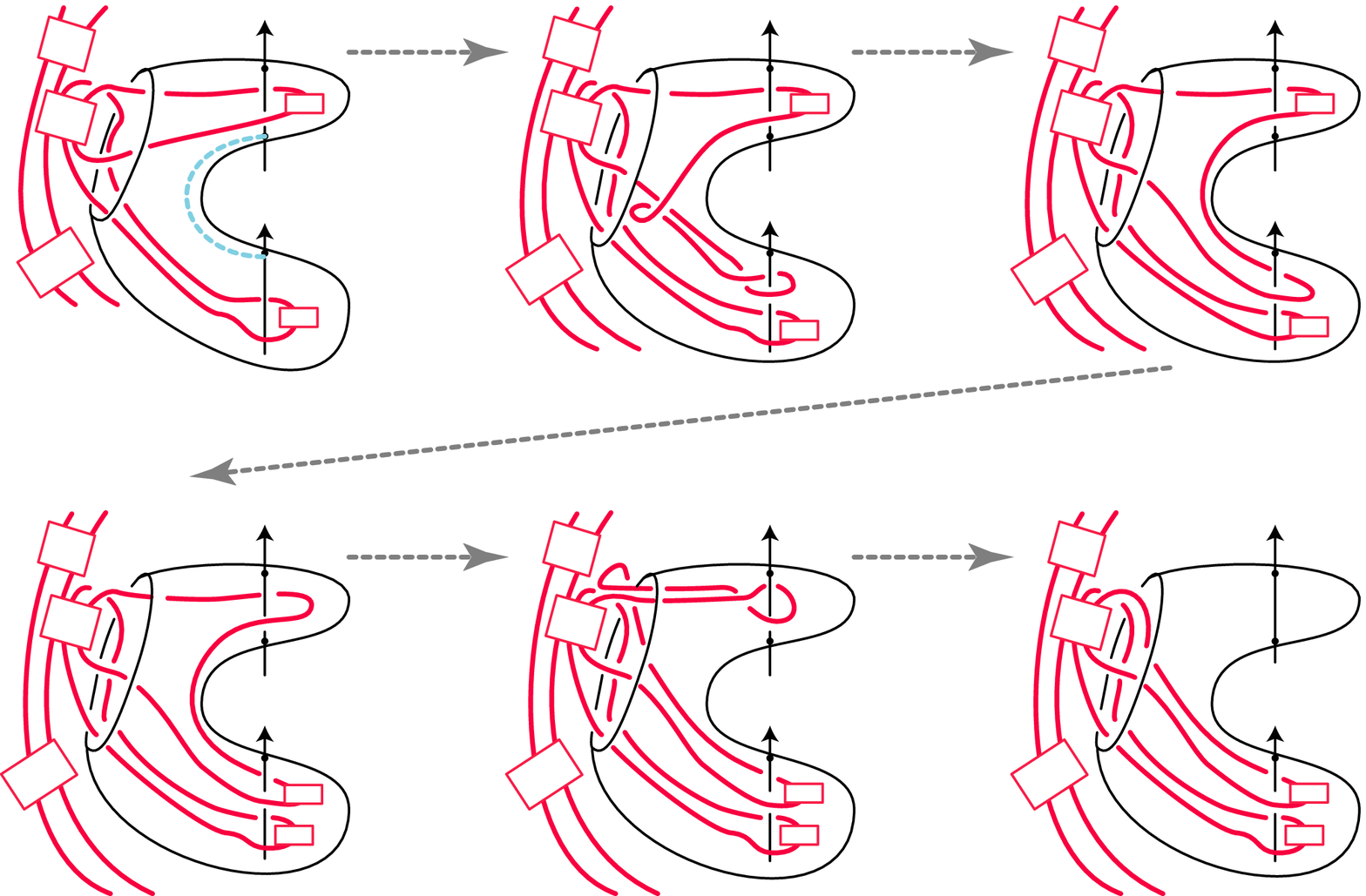}}
\caption{(Step 4) Realizing exchange move as transverse isotopy.}\label{fig:local_braid}
\end{center}
\end{figure}
where all the braid strands (red arcs) may be weighted and boxes contain braidings. 
After braid isotopy, we positively destabilize it so that the resulting braid $L'$ does not go through $X$. 
If $L$ does not go through $X$ then clearly $L=L'$. 
\end{proof}

Our exchange move is related to Giroux's elimination of a pair of elliptic and hyperbolic points of the same sign and connected by a singular leaf in a characteristic foliation.
In a neighborhood of $R_1 \cup R_2$ we may identify the open book foliation and the  characteristic foliation by the structural stability theorem in \cite{ik1-1}.
We see two elimination pairs in the shaded region of Figure~\ref{fig:fol_exchange}-(1). 
Applying Giroux elimination twice, we get a characteristic foliation topologically conjugate to Figure~\ref{fig:fol_exchange}-(3). 
Despite this fact, an exchange move and a Giroux elimination are different in the following sense: 
\begin{itemize}
\item 
A Giroux elimination can be achieved by a $C^{0}$-small perturbation that is supported on a small neighborhood of %$D$,
the singular leaf joining the elimination-pair, 
whereas the exchange move requires global isotopy (i.e., not $C^{0}$-small and not supported on a small neighborhood of $D$). 
Moreover the latter might change the braid isotopy class of $L=\partial F$ though it preserves the transverse link type.
\item 
One can apply a Giroux elimination without non-strongly essential condition on the elliptic point $v$, but for an  exchange move this assumption is necessary.  
\item
An exchange move on $\F(F)$ eliminates two pairs of elliptic and hyperbolic points at the same time. It is, in general, impossible to eliminate only one of the two pairs. 
But a Giroux elimination can apply to each pair separately. 
(In braid/open book foliation theory an operation called   destabilization of a closed braid eliminates one pair.) 
\end{itemize}

\section{Stabilization and open book foliations}
\label{sec:stabilization}

Let $(S, \phi)$ be an open book. Let $\alpha \subset S$ be a properly embedded arc in $S$. 
Let $S'$ denote the surface $S$ with an annulus $A$ plumbed along $\alpha$. 
Let $$\phi'_\pm:= D_\alpha^{\pm} \circ \tilde\phi \in {\rm Diff}^+(S', \partial S')$$
where $D_\alpha$ is the positive Dehn twist along a core circle of the attached annulus $A$, and $\tilde\phi:S'\to S'$ is an extension of $\phi:S\to S$ such that $\tilde\phi = \phi$ on $S$ and $\tilde\phi = id$ on $S' \setminus S.$ 
We call the new open book $(S', \phi'_{\pm})$ a {\em positive/negative  stabilization} of $(S, \phi)$ and the arc $\alpha$ a {\em stabilization arc}. 

It is known that (see Etnyre's survey \cite{e} for example)
$$M_{(S', \phi')} \simeq M_{(S, \phi)} \# M_{(A, D_\alpha)} \simeq M_{(S, \phi)} \# S^3 \simeq M_{(S, \phi)}.$$
so we may identify $M_{(S', \phi')}$ with $M_{(S,\phi)}$ by a homeomorphism  $\Theta: M_{(S,\phi)} \rightarrow M_{(S',\phi')}$ that preserves the pages,  $\Theta(S_t) = S'_{t}$ for $t \in [0,1)$.
 
%We say surfaces $F \subset M_{(S,\phi)}$ and $F' \subset M_{(S',\phi')}$ are {\em homeomorphic} if we may choose page-preserving homeomoprhism $\Theta(F)=F'$.

%\marginpar{5/11 Added. I tried to clarify the meaning that $F$ and $F'$ are homeomorphic). However, the notion ``homeomorphic'' seems to be confusing (it sounds they are homeomorphic as topological spaces), so how about using other notion, like ``equivalent'' or something ?}

In this section we study how the open book foliation $\F(F)$ of a surface $F \subset M_{(S, \phi)}$ changes under a stabilization of the open book $(S, \phi)$.
We start with a trivial case. 
Let $\alpha_t := \alpha \times \{t\} \subset S_t$. % denote a  copy of the stabilization arc $\alpha$ in the page $S_t$. 

\begin{proposition}
If $F\subset M_{(S, \phi)}$ does {\em not} intersect $\alpha_0$ then there exists a surface $F' \subset M_{(S', \phi')}$ such that $F \simeq F'$ homeomorphic and that  $\F(F)\simeq\F(F')$ topologically conjugate. 
\end{proposition}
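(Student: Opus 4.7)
The plan is to exhibit $F'$ as the image of $F$ under the page-preserving homeomorphism $\Theta$ from the excerpt, and then to use the hypothesis $F \cap \alpha_0 = \emptyset$ together with compactness of $F$ to check that $F$ lies in the region of $M_{(S, \phi)}$ where $\Theta$ literally preserves pages.

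First I would describe $\Theta$ as a local modification of the open book. The stabilization affects the open book only near the arc $\alpha$: the plumbed annulus $A$ is attached along $\alpha$; the new monodromy $\phi'_\pm = D_\alpha^{\pm} \circ \tilde\phi$ agrees with $\phi$ outside a thin regular neighborhood $N \subset S$ of $\alpha$ (since $\tilde\phi|_S = \phi$ and $D_\alpha^\pm$ is supported in $N$); and the added Hopf-band ``$S^3$-summand'' is contained in a $3$-ball $B' \subset M_{(S', \phi'_\pm)}$ obtained by thickening $N \cup A$. Its preimage $B := \Theta^{-1}(B') \subset M_{(S, \phi)}$ is a $3$-ball that can be taken to be an arbitrarily thin tubular neighborhood of $\alpha_0$, and $\Theta$ restricts to a page-preserving homeomorphism
$$\Theta \colon M_{(S, \phi)} \setminus B \;\longrightarrow\; M_{(S', \phi'_\pm)} \setminus B',$$
coming from the natural identification $S \setminus N = S' \setminus (N \cup A)$ and the coincidence $\phi|_{S \setminus N} = \phi'_\pm|_{S \setminus N}$.

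Next I would use the hypothesis and compactness to arrange $F \cap B = \emptyset$. Since $F \cap S_0$ is a finite union of a-arcs, b-arcs, c-circles and singular leaves of $\F(F)$ all disjoint from the compact arc $\alpha_0$, there is $\e > 0$ such that $F \cap S_0$ avoids the $\e$-tubular neighborhood of $\alpha_0$ in $S_0$. By compactness of $F$ and continuity, choosing $B$ to be a sufficiently thin tubular neighborhood of $\alpha_0$ in $M_{(S, \phi)}$ (thin in all three directions) gives $F \cap B = \emptyset$ directly, without any ambient isotopy. Setting $F' := \Theta(F) \subset M_{(S', \phi'_\pm)} \setminus B'$ then yields a surface homeomorphic to $F$, and page-preservation of $\Theta$ on $M_{(S, \phi)} \setminus B$ gives $F' \cap S'_t = \Theta(F \cap S_t)$ for every $t \in [0,1)$, so that $\Theta|_F \colon F \to F'$ carries regular leaves to regular leaves, singular leaves to singular leaves, and preserves the signs of elliptic and hyperbolic singularities. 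This provides the desired topological conjugacy $\F(F) \simeq \F(F')$.

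The main technical point is justifying that the stabilization is supported in a $3$-ball neighborhood of the single page-slice $\alpha_0$, not of the swept disc $\bigcup_t \alpha_t$. This follows from the standard fact that plumbing a Hopf band realizes $M_{(S, \phi)} \# S^3 \cong M_{(S, \phi)}$ with the $S^3$-summand contained in an arbitrarily small ball, combined with a careful comparison of $\phi$ and $\phi'_\pm$ on $S \setminus N$ showing that the extra ``twist by $D_\alpha^\pm$'' can be absorbed entirely inside the thickening of $N \cup A$; this is the only step where one has to be careful about the choice of representative of the monodromy class.
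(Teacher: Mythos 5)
Your key claim, namely that the page-preserving homeomorphism $\Theta$ restricts on the complement of a thin $3$-ball $B$ around the single arc $\alpha_0 \subset S_0$ to the ``literal'' identification induced by $S\setminus N = S'\setminus(N\cup A)$, cannot be right, and this is precisely the step you flag as the main technical point. For $t$ bounded away from $0$, the entire page $S_t$ lies in $M_{(S,\phi)}\setminus B$ and the entire page $S'_t$ lies in $M_{(S',\phi')}\setminus B'$. A page-preserving homeomorphism of these complements would thus have to carry $S_t$ homeomorphically onto $S'_t$, but $S$ and $S'$ are not homeomorphic (plumbing changes the Euler characteristic). The locus where the two open books genuinely differ is the mapping-torus tube over the attached annulus $A$ swept through all $t\in[0,1]$, not a neighborhood of a single page-slice $\alpha_0$; your appeal to ``absorbing the twist inside the thickening of $N\cup A$'' does not close this gap, because that thickening is not a small ball near $\alpha_0$. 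In short, the hypothesis $F\cap\alpha_0=\emptyset$ does not by itself put $F$ in a region where $\Theta$ is the natural inclusion, so defining $F':=\Theta(F)$ as you propose is not justified by the argument you give.

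The paper's proof avoids the homeomorphism $\Theta$ entirely. It defines $F'$ page by page via $F'\cap S'_t := \iota(F\cap S_t)$, where $\iota\colon S_t\hookrightarrow S'_t$ is the tautological inclusion, and then verifies that this collection of slices closes up into a surface by checking the monodromy gluing condition
$F'\cap S'_0 = D_\alpha^{\pm1}(\iota(F\cap S_0)) = D_\alpha^{\pm1}\circ\iota\circ\phi(F\cap S_1) = D_\alpha^{\pm1}\circ\tilde\phi\circ\iota(F\cap S_1) = \phi'(F'\cap S'_1)$.
The hypothesis $F\cap\alpha_0=\emptyset$ is used exactly once, in the first equality, to make the Dehn twist $D_\alpha^{\pm1}$ act trivially on $\iota(F\cap S_0)$ (after taking $N$ thin enough, by the compactness argument you correctly invoke). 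Everything else is bookkeeping of the monodromies. If you want to save your approach, you must either drop the ``small ball around $\alpha_0$'' picture and instead construct the page-wise identification directly as the paper does, or carefully define $\Theta$ on the full mapping-torus tube over $N\cup A$ and show it carries $F$ appropriately, which in the end reduces to the same computation.
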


\begin{proof}
Let $\iota : S_t \hookrightarrow S_t'$ denote the natural inclusion map. 
We construct $F'$ so that $F' \cap S_t' = \iota(F \cap S_t)$ is satisfied for every $t\in[0,1)$. 
Then $F'$ does not intersect the arc $\iota(\alpha_0) \subset S_0'$.  Therefore
$$F' \cap S_0' = D_\alpha^{\pm1}( F' \cap S_0' ) =D_\alpha^{\pm1} \circ \iota \circ \phi( F \cap S_1 ) = D_\alpha^{\pm1} \circ \tilde \phi \circ \iota(F \cap S_1) = \phi'(F' \cap S_1'),$$
so we can identify the curves $F' \cap S_0'$ and $F' \cap S_1'$ by the monodromy $\phi'$ and obtain a surface $F' \subset M_{(S', \phi')}$. 
By the construction clearly $F \simeq F'$ and $\F(F)=\F(F')$
\end{proof}

Next we consider the case where $F$ intersects the stabilization arc $\alpha_0$ in the page $S_0$. 
Let $\overline\alpha_t \subset S_t$ be a collar neighborhood of $\alpha_t$. 
Assume that $F$ intersects $\overline\alpha_0$ in $m$ disjoint arcs $\beta_i \times\{0\}$;
$$F \cap \overline\alpha_0 = (\beta_1 \cup \cdots \cup \beta_m) \times \{0\}$$ 
where 
\begin{itemize}
\item
$\beta_i \subset S$ is an arc traversing the plumbed annulus $A$ (see Figure~\ref{fig:stabilization})
\item
the geometric intersection number $i(\beta_i, \alpha)=1$ and 
\item 
$\beta_i\times \{0\}$ is a sub-arc of some b-arc $b_i$ of the open book foliation $\F(F)$, possibly $b_i=b_j$ for some $i \neq j$. 
\end{itemize}

Then we construct two surfaces $F'$ and $F''$ in the stabilized open book $(S',\phi')$ that are homeomorphic to $F$.

\begin{proposition}\label{prop:F' and F''}
Suppose that $F\subset M_{(S, \phi)}$ intersects non-trivially the stabilization arc $\alpha_0$ in $m$ points. 
We further assume that $b_i\neq b_j$ for $i \neq j$, that is, every b-arc in $S_{0}$ intersects $\alpha_0$ in at most one point.
Then there exist surfaces $F'$ and $F''\subset M_{(S', \phi')}$ such that 
\begin{equation}\label{homeo}
F \simeq F' \simeq F''
\end{equation}
homeomorphic, and 
\begin{equation}\label{top_conjugate}
\F(F\setminus \mathcal D)\simeq\F(F' \setminus \mathcal D')\simeq\F(F'' \setminus \mathcal D'')
\end{equation}
topologically conjugate, where 
\begin{itemize}
\item
$\mathcal D \subset F$ is a disjoint union of $m$ bi-gons  foliated only by b-arcs,  
\item
$\mathcal D' \subset F'$ is a disjoint union of $m$ bi-gons each of which consists of two adjacent bb-tiles of opposite signs, 
\item
$\mathcal D'' \subset F''$ is exactly the same as $D'$ after exchanging the signs of the bb-tiles for each bi-gon. 
See Figure~\ref{fig:stabilization-tiles}.
\end{itemize}
\begin{figure}[htbp]
\begin{center}
\SetLabels
(0*1) $(1)$  $\mathcal D$\\
%(0*.5) $\beta_i \times \{0\}$\\
(-.02*.4) $b_i$\\
(.4*1) $(2)$  $\mathcal D'$\\
(.35*.05) $\beta_i' \times \{0\}$\\
(.65*.9) $\tau_i'\times\{0\}$\\
(.7*.8) $=\tau_i\times\{1\}$\\
(.7*.05) $\beta_i\times\{\e\}$\\
(.42*.5) $p_i'$\\
(.58*.5) $q_i'$\\
(.8*1) $(3)$  $\mathcal D''$\\
(.8*.5) $p_i''$\\
(.97*.5) $q_i''$\\
\endSetLabels
\strut\AffixLabels{\includegraphics*[width=110mm]{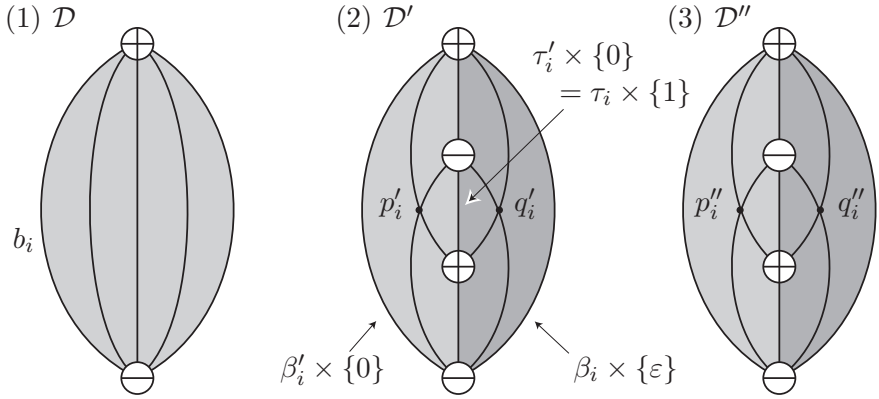}}
\caption{
(1) A bi-gon in $\mathcal D \subset F$.
(2) Two adjacent bb-tiles forming a bi-gon in $\mathcal D' \subset F'$.
(3) Two adjacent bb-tiles forming a bi-gon in $\mathcal D'' \subset F''$.
The hyperbolic points satisfy $\sgn(p'_i) = - \sgn(p_i'')= -\sgn(q_i') = \sgn(q_i'')$ for $i=1, \cdots, m$. 
}\label{fig:stabilization-tiles}
\end{center}
\end{figure}
\end{proposition}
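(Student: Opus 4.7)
The plan is to perform a local modification of $F$ supported near the stabilization region and to read off the resulting foliation from an explicit movie presentation. Outside a small neighborhood of $\alpha_0$, the old pages $S_t$ and the new pages $S'_t$ coincide canonically via the page-preserving homeomorphism $\Theta$, so $F'$ and $F''$ will both agree with $\Theta(F)$ there, and both the homeomorphism type and the open book foliation are automatically preserved on the complement of the bigons. All the real work is concentrated in the neighborhoods of the arcs $\beta_i \times \{0\}$.

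First I would set up $m$ pairwise disjoint local models. Using the hypothesis $b_i \neq b_j$ for $i \neq j$, I can choose pairwise disjoint $3$-ball neighborhoods $U_i \subset M_{(S,\phi)}$, each containing a collar of $\beta_i \times \{0\}$, so that $U_i \cap F$ is a disc foliated by parallel b-arcs (a sub-region of the tile containing $b_i$). Under $\Theta$ these identify with neighborhoods $U'_i \subset M_{(S',\phi')}$ containing the plumbed annulus $A$ together with a new strand of binding running through and a half-twist coming from $D_\alpha^{\pm 1}$.

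Next, inside each $U'_i$ I would replace $U_i \cap F$ in two different ways: push the disc across the annulus $A$ so that its intersection with $S'_0$, previously equal to $\beta_i \times \{0\}$, becomes an arc wrapping once around the new binding strand, in one of the two possible directions. Extending this modification monotonically in $t$ and closing up using the identification $\phi' = D_\alpha^{\pm} \tilde\phi$ at $t=1$ produces the two candidate surfaces $F'$ and $F''$. The required local surface isotopy is of the finger-move type used in \cite{ik1-1}. By inspection of the movie presentation, the wrapping introduces two new elliptic points of opposite sign and forces exactly two hyperbolic singularities, arranged as two adjacent bb-tiles sharing a b-arc and forming a bigon as in Figure~\ref{fig:stabilization-tiles}(2) (resp.\ (3)).

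The main obstacle will be the sign bookkeeping. One has to verify, using the conventions of Figure~\ref{fig:sign}, that within each new bigon the two hyperbolic points have opposite signs, and moreover that reversing the wrapping direction (passing from $F'$ to $F''$) reverses the positive normal direction of $F$ at each new hyperbolic point and hence flips both signs, yielding $\sgn(p'_i)=-\sgn(p''_i)$ and $\sgn(q'_i)=-\sgn(q''_i)$. One must also confirm that the Dehn twist factor in $\phi'_\pm$ is correctly absorbed by the monodromy identification, so that $F'$ and $F''$ close up into surfaces homeomorphic to $F$; this is where the disjointness assumption $b_i \neq b_j$ is essential, as it lets each $U'_i$ be treated independently.
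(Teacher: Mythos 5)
Your broad outline (localize near each $\beta_i\times\{0\}$ using the hypothesis $b_i\neq b_j$, do an independent modification in each neighborhood, then compare signs) is the same as the paper's, but the construction you describe in the middle step does not resolve the actual difficulty and is too vague to carry the proposition.

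The crux is that a naive extension $F'\cap S_t'=\iota(F\cap S_t)$ fails to close up because $\phi'=D_\alpha^{\pm}\tilde\phi$ twists each $\iota(\beta_i)$ across the plumbed annulus: $F'\cap S_0'$ is forced to be $D_\alpha(\iota(F\cap S_0))$, which differs from $\iota(F\cap S_0)$. What the paper does is not a local isotopy of $F$, but a genuinely new movie: for $\varepsilon\le t\le 1$ it sets $F'\cap S_t' = \iota(F\cap S_t)\cup\bigcup_i(\tau_i\times\{t\})$, where $\tau_i$ is a \emph{new} essential arc of the annulus $A$ — each $\tau_i$ pierces the new binding in two points and accounts for the one positive / one negative pair of new elliptic points in each bigon — and then, in the thin slab $0\le t\le\varepsilon$, reconnects the twisted arcs $\beta_i'=D_\alpha(\beta_i)$ with the $\tau_i'=D_\alpha(\tau_i)$ via two hyperbolic saddles $p_i,q_i$ per bigon. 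Your phrase ``an arc wrapping once around the new binding strand'' does not produce intersections of $F'$ with the binding and hence produces no new elliptic points; what you need is an extra b-arc crossing the new binding, plus the two hyperbolic reconnections. Asserting ``by inspection \dots exactly two hyperbolic singularities'' skips exactly the content that must be supplied.

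The distinction between $F'$ and $F''$ is also misidentified. In the paper both surfaces use the \emph{same} arcs $\tau_i$ with a fixed orientation (opposite to $\beta_i$'s); they differ only in the choice of describing arcs and the order ($i=1,\dots,m$ versus $i=m,\dots,1$) used to perform the reconnections in the slab $[0,\varepsilon]$, and it is this choice — not a ``wrapping direction'' chosen independently in each $U_i'$ — that flips all the hyperbolic signs simultaneously, so that $\sgn(p_i')=-\sgn(p_i'')=-\sgn(q_i')=\sgn(q_i'')$. Your local-direction reading would in principle give $2^m$ surfaces rather than the two that the proposition singles out, and gives no reason why all $m$ pairs of hyperbolic signs flip coherently. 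You should identify the $\tau_i$ arcs as the source of the new elliptic points, exhibit the two hyperbolic reconnections explicitly, and explain that $F'$ versus $F''$ is a global choice of resolution of all $m$ reconnections at once.
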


%\marginpar{Added an explanation for $b_{i_1}=b_{i_2}= \cdots = b_{i_k}$}
\begin{remark}
If $b_{i_1}=b_{i_2}= \cdots = b_{i_k}$ for some $1\leq i_1<\cdots<i_k \leq m$ that is, if some b-arc in $S_{0}$ intersects the stabilization arc $\alpha_0$ in more than one point, after some modification of the descriptions of $\mathcal{D, D', D''}$, the same results (\ref{homeo}) and (\ref{top_conjugate}) still hold: 
For example, $|\mathcal D|=|\mathcal D'|=|\mathcal D''|$ is no longer $m$ but it becomes less than $m$.
Also Sketches (2) and (3) of Figure~\ref{fig:stabilization-tiles} become more complicated and each should contain $2(k+1)$ bb-tiles. 
\end{remark}

\begin{proof}
We prove Proposition~\ref{prop:F' and F''} only for the case $\phi'=\phi'_+$ (positive stabilization) since parallel argument holds for the case  $\phi'=\phi'_-$. 

We may assume that there exists $\e>0$ such that $\F(F)$ has no hyperbolic points in the family of pages $\{S_t\}_{0\leq t < 2\e}$ and that  
$$F \cap \overline\alpha_t = (\beta_1 \cup \cdots \cup \beta_m) \times \{t\} \quad \mbox{ for } 0 \leq t\leq \e.$$
We assume $\beta_1, \ldots, \beta_m$ are lined up from the left to the right as in Figurer~\ref{fig:stabilization}.  
Recall that $\beta_i \times \{0\}$ is a sub-arc of a b-arc $b_i \subset \F(F)$. The orientation of $b_i$ induces an orientation of $\beta_i$. 
Let $\tau_1, \ldots, \tau_m$ be essential arcs of $A$ lined up from the right to the left as in the left sketch of Figure~\ref{fig:stabilization}. 
We orient $\tau_i$ in the opposite direction to the orientation of $\beta_i$. (i.e., if $\beta_i$ is oriented ``upward'' then $\tau_i$ is oriented ``downward'' and vice versa.) 
\begin{figure}[htbp]
\begin{center}
\SetLabels
(.05*.9) $\tau_m$\\
(.1*.9) $\cdots$\\
(.15*.9) $\tau_1$\\
(.22*1) $\beta_1$\\
(.27*1) $\cdots$\\
(.32*1) $\beta_m$\\
(.35*.5) $\alpha$\\
(.4*.2) $A$\\
(.62*.9) $\tau_m'$\\
(.67*.9) $\cdots$\\
(.72*.9) $\tau_1'$\\
(.79*1) $\beta_1'$\\
(.84*1) $\cdots$\\
(.89*1) $\beta_m'$\\
\endSetLabels
\strut\AffixLabels{\includegraphics*[width=130mm]{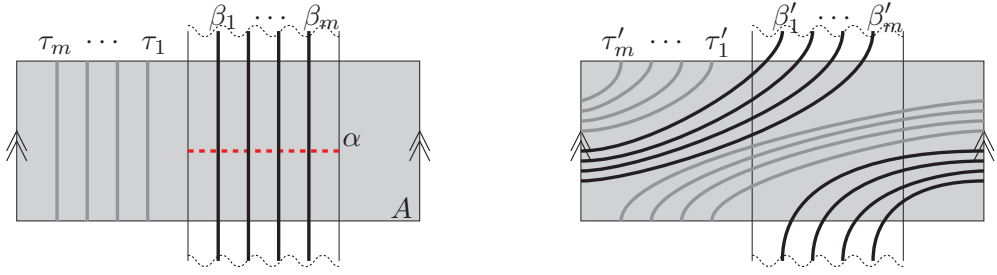}}
\caption{The arcs $\beta_i, \tau_i, \beta_i'= D_\alpha(\beta_i)$ and $\tau_i'= D_\alpha(\tau_i)$. 
The dashed line represents the stabilization arc $\alpha$. 
The shaded rectangle where the left and the right edges  are identified represents the plumbed annulus $A$.}\label{fig:stabilization}
\end{center}
\end{figure}

We construct $F'$ and $F''$ by defining intersection with the pages $S_t'$.

For $\e \leq t \leq 1$ let 
\begin{equation}\label{eq:e}
F' \cap S_t' = F'' \cap S_t' = \iota(F \cap S_t) \cup \bigcup_{i=1}^m (\tau_i \times \{t\}).
\end{equation}
Viewing the arc $\tau_i \times \{t\}$ as a b-arc of the open book foliation, the orientation of $\tau_i$ determines signs of the elliptic points of $\tau_i$.

When $t=0$ let 
$$F' \cap S_0' = F'' \cap S_0' = D_\alpha(\iota(F \cap S_0)) \cup \bigcup_{i=1}^m (\tau_i' \times \{0\}) = \phi'(F' \cap S_0'),$$ 
where $\tau_i' := D_\alpha(\tau_i)$ as in Figure~\ref{fig:stabilization}.

For $0\leq t \leq \e$ we define $F'$ and $F''$ by movie presentations. 
Let $\beta_i' := D_\alpha(\beta_i)$.
We make $\beta_i'$ and $\tau_i'$ come closer  
\begin{itemize}
\item
for $F'$ starting from $i=1$ to $m$ along the describing arcs in Figure~\ref{fig:stabilization-movie}-(1), 
\item
for $F''$ starting from $i=m$ to $1$ along the describing arcs in  Figure~\ref{fig:stabilization-movie}-(2).
\end{itemize}
\begin{figure}[htbp]
\begin{center}
\SetLabels
(0*0) (1) $F' \cap S_0'$\\
(.05*.28) $\tau_m'$\\
(.1*.28) $\cdots$\\
(.15*.28) $\tau_1'$\\
(.22*.32) $\beta_1'$\\
(.27*.32) $\cdots$\\
(.32*.32) $\beta_m'$\\
(1.02*0) (2) $F'' \cap S_0'$\\
(.6*.28) $\tau_m'$\\
(.65*.28) $\cdots$\\
(.7*.28) $\tau_1'$\\
(.77*.32) $\beta_1'$\\
(.82*.32) $\cdots$\\
(.87*.32) $\beta_m'$\\
(0*.38) (3) $F' \cap S_{\e/2}'$\\
(1.02*.38) (4) $F'' \cap S_{\e/2}'$\\
(.32*.9) $\tau_m$\\
(.37*.9) $\cdots$\\
(.41*.9) $\tau_1$\\
(.48*1) $\beta_1$\\
(.53*1) $\cdots$\\
(.58*1) $\beta_m$\\
(.25*1) (5) $F' \cap S_\e'= F'' \cap S_\e'$\\
\endSetLabels
\strut\AffixLabels{\includegraphics*[width=130mm]{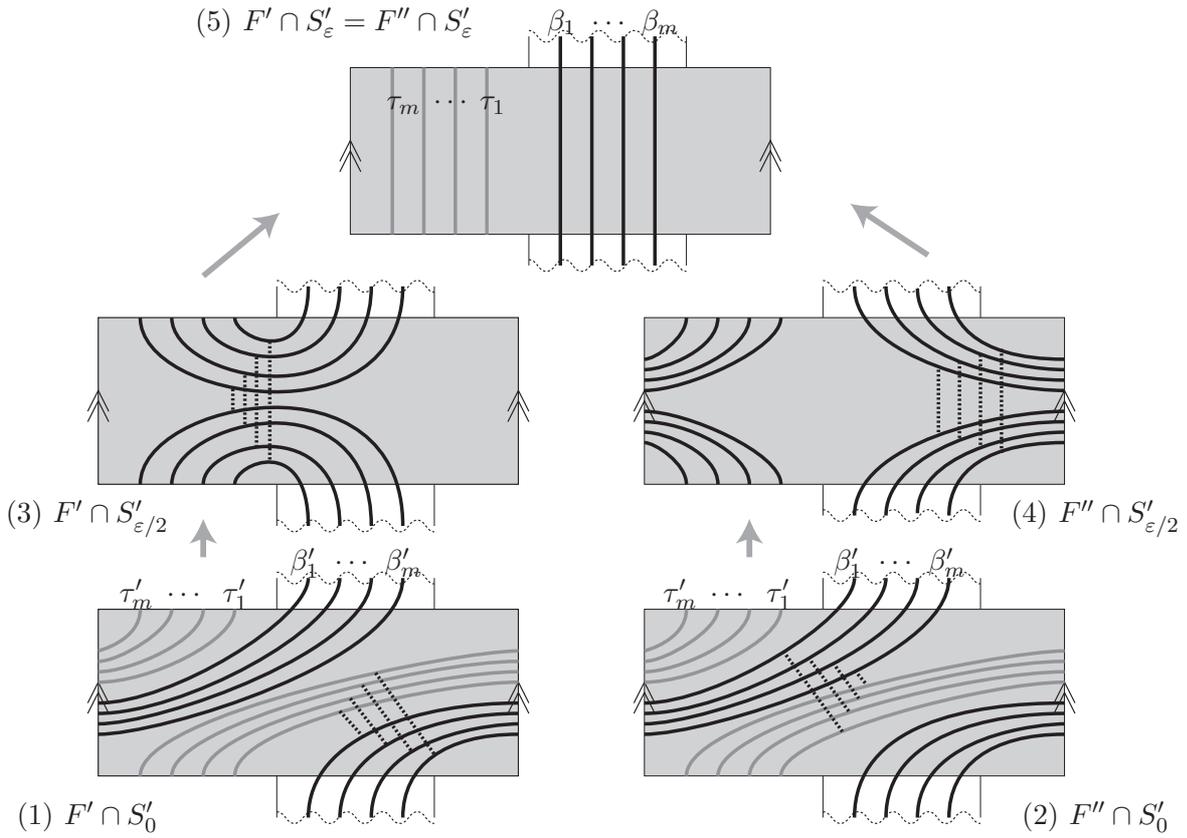}}
\caption{$(1)\to(3)\to(5)$: Movie presentation of $F'$.\\  
$(2)\to(4)\to(5)$: Movie presentation of $F''$. 
}\label{fig:stabilization-movie}
\end{center}
\end{figure}
Call the resulting saddle points $p'_i \in F'$ and $p_i'' \in F''$ respectively. 
Notice that we set the orientation of $\tau_i$ so that the hyperbolic points $p_i'$ and $p_i''$ have opposite signs. 

We further form hyperbolic points $q_m', \cdots, q_1'$ for $\F(F')$ and $q_1'', \cdots, q_m''$ for $\F(F'')$ by using the describing arcs as depicted in Figure~\ref{fig:stabilization-movie}-(3) and (4) respectively. 
On the level $t=\e$ the condition (\ref{eq:e}) is satisfied. 
We have
$$\sgn(p'_i) = - \sgn(p_i'')= -\sgn(q_i') = \sgn(q_i'')$$
and the bb-tiles of $\F(F')$ (resp. $\F(F'')$) containing $p_i'$ and $q_i'$ (resp. $p_i''$ and $q_i''$) are adjacent and form a bi-gon as depicted in Figure~\ref{fig:stabilization-tiles}. 
\end{proof}

We find similarity of the open book foliation $\F(\mathcal{D'})$ and Figure~ \ref{fig:fol_exchange}-(1). 
Pictorially the transition $\F(\mathcal D')\to \F(\mathcal D)$ in Figure~\ref{fig:stabilization-tiles}
caused by the destabilization $(S', \phi')\to(S, \phi)$ is the same as the transition $\F(F')\to\F(F)$ in Figure~ \ref{fig:fol_exchange} caused by an exchange move. 
%Then on the open book foliation, we may regard the destabilization as an operation to reduce the number of singular points, whose effect on the open book foliation is the same as exchange move. 
%In particular, destabilization can be also regarded as an analogue of Giroux elimination. 
Important differences are:  
\begin{itemize}
\item
For an exchange move the leaf corresponding to $\tau'_{i} \times \{0 \}$ must be boundary-parallel, whereas for a destabilization $\tau'_{i} \times \{0 \}$ is an essential arc.
\item
Under an exchange move the open book $(S, \phi)$ stays the same, but not under a destabilization. 
\end{itemize}

We have constructed two different surfaces $F'$ and $F''$ homeomorphic to $F$ in a stabilized open book. They are related to each other in the following way:

\begin{proposition}
\label{prop:F'=F''}
The surfaces $F', F'' \subset M_{(S', \phi')}$ constructed in the proof of Proposition~\ref{prop:F' and F''} are isotopic to each other. 
For example, they can be related to each other by exchange moves and bypass moves. 
See Figure~\ref{fig:stabilization-tile2}. 
$$F' \ {\rm sketch (1)}
 \stackrel{{\rm exchange}^{-1}}{\longrightarrow} 
{\rm sketch (2)} 
\stackrel{\rm bypass}{\longrightarrow} \
\stackrel{\rm bypass}{\longrightarrow}
{\rm sketch (3)} 
\stackrel{\rm exchange}{\longrightarrow} 
F'' \ {\rm sketch (4)}$$

\end{proposition}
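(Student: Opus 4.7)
The plan is to reduce the statement to a local computation. Since both $F'$ and $F''$ agree with $F$ outside collar neighborhoods of $\mathcal{D}'$ and $\mathcal{D}''$, and by Proposition~\ref{prop:F' and F''} their open book foliations are topologically conjugate on the common complement, it suffices to exhibit the isotopy and the explicit sequence of moves inside a neighborhood of a single bi-gon. The bi-gons for distinct indices $i$ are disjoint and sit in disjoint portions of the plumbed annulus $A$, so the moves can be carried out independently and in parallel over $i=1,\ldots,m$.

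I would then realize the four-step sequence of Figure~\ref{fig:stabilization-tile2} explicitly. Starting from $F'$ (sketch (1)), I apply an exchange move in reverse at the negative elliptic point shared by the two bb-tiles of the bi-gon. To invoke Theorem~\ref{theorem:localex} backwards I must check: the elliptic point is non-strongly essential (the relevant b-arc lies in the plumbed annulus $A$ and cobounds a disc with the new binding component, hence is boundary-parallel in $S'_t$), the two adjacent regions are bb-tiles of opposite signs, and the arc along which we stabilize lies in $M\setminus F$. This gives sketch (2), a configuration with two elliptic/hyperbolic pairs of opposite sign separating the bb-tiles. Two successive bypass moves then transpose the hyperbolic points so as to flip the sign pattern of the bi-gon to that of $\mathcal{D}''$, producing sketch (3). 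A final (forward) exchange move eliminates the auxiliary elliptic/hyperbolic pairs and gives sketch (4) $= F''$.

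The step I expect to be most delicate is the construction of the two bypass rectangles in $M\setminus F$. For each bypass I must locate a disc $\mathcal D$ whose degenerate $aa$-tile is attached along the gray arc of Figure~\ref{fig:bypass-hexagon}-(1) joining the dividing curves through the two hyperbolic points to be swapped, with the sign of $\mathcal D$ matching the {\tt Type1} or {\tt Type2} classification of the hexagon. The rectangles are naturally available inside the 1-handle region added by the stabilization: the movie presentation set up in the proof of Proposition~\ref{prop:F' and F''} (compare Figure~\ref{fig:bypass-movie-type1}) lets one read off a family of discs fibered by the pages $S'_t$ with vertices identified as required. Verifying that the resulting bypasses are respectively of the right sign, and that each hexagon really is of the asserted {\tt Type}, reduces to checking the pairing of page-levels of the four boundary points $p, p', q, q'$ against the page-levels of the endpoints of the bb-tiles, which follows from the explicit describing-arc data of Figure~\ref{fig:stabilization-movie}.

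The remaining book-keeping is routine: the auxiliary elliptic/hyperbolic pairs created by the first (inverse) exchange move are confined to the bi-gon, so when the bypass moves have rearranged the local foliation to match sketch (3), the hypotheses of Theorem~\ref{theorem:localex} are visibly satisfied for the closing forward exchange. Since each of the four moves is realized by an ambient isotopy supported near the bi-gon and fixing $\partial F'=\partial F''$ setwise as closed braids, the composition is the desired isotopy carrying $F'$ to $F''$.
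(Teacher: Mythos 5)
Your proof follows essentially the same route as the paper's: localize to a single bi-gon, apply an inverse exchange move to insert two extra bb-tiles, apply a retrograde and then a prograde bypass move to rearrange the local foliation, and finish with a forward exchange move to eliminate two tiles and arrive at $\mathcal D''$. The places where you supply extra prose (checking non-strong-essentiality for the exchange moves, locating the bypass rectangles in the $1$-handle region, matching hexagon types against $\sgn(\D)$) are exactly the points the paper delegates to Figures~\ref{fig:stabilization-tile2} and \ref{fig:bypass-exchange}, so the content is equivalent. A few phrasings are imprecise but not fatal: the inverse exchange move inserts two new elliptic points $E, F$ along a b-arc joining $A$ and $D$, rather than acting ``at the shared negative elliptic point''; the bypass moves do not ``flip the sign pattern'' themselves (they interchange heights of two opposite-sign saddles without changing any signs), the sign flip in passing from $\mathcal D'$ to $\mathcal D''$ comes from discarding the original pair of tiles and retaining the inserted pair; and the blanket claim that each move is ``supported near the bi-gon'' is generally false for exchange moves (as the paper emphasizes they need a global isotopy), though it is harmless here since the relevant b-arcs lie in the plumbed annulus and the region one pushes through is contained in the stabilization handle.
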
 

\begin{figure}[htbp]
\begin{center}
\SetLabels
(0*.95) (1) $\mathcal D'$\\
(.23*.02) exchange\\
(.23*-.04) move inverse\\
(.29*.95) (2) $\mathcal D_1$\\
(.5*.02) bypass\\
(.5*-.04) moves\\
(.56*.95) (3) $\mathcal D_2$\\
(.77*.02) exchange\\
(.77*-.04) move\\
(.83*.95) (4) $\mathcal D''$\\
\endSetLabels
\strut\AffixLabels{\includegraphics*[width=130mm]{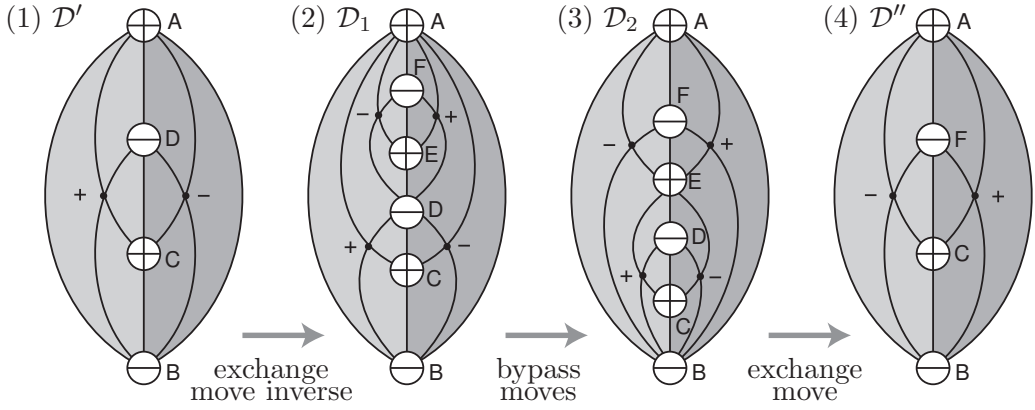}}
\caption{(1) bi-gon $\mathcal D' \subset F'$. 
(2) bi-gon $\mathcal D_1$. 
(3) bi-gon $\mathcal D_2$. 
(4) bi-gon $\mathcal D'' \subset F''$. 
}\label{fig:stabilization-tile2}
\end{center}
\end{figure}

\begin{proof}
For simplicity we assume that $m=1$, i.e., the number of bi-gon regions $|\mathcal D'|=|\mathcal D'|=1$ and we call the bi-gons $\mathcal D'$ and $\mathcal D''$, respectively, by abusing the notations.
(If $m>1$ each arc in Figure~\ref{fig:stabilization-tile2} is   replaced by parallel $m$ arcs and we apply similar constructions.)  
There are many ways to relate $\mathcal D'$ and $\mathcal D''$. In the following we present one of the ways.

Denote the elliptic points of $\mathcal D'$ by $A, B, C, D$ as in Sketch (1) of Figure~\ref{fig:stabilization-tile2} such that $\sgn(A)=\sgn(C)=-\sgn(B)=-\sgn(D)=+1$. 
We apply the inverse of an exchange move to $\mathcal D'$ to insert two adjacent bb-tiles between $A$ and $D$ as in Sketch (2), where $E$ and $F$ denote new positive and negative elliptic points, respectively. 
We call the resulting bi-gon of four bb-tiles $\mathcal D_1$.

Next we apply a retrograde bypass move to the left half of $\mathcal D_1$ and then apply a prograde bypass move to the right half of $\mathcal D_1$. 
Detailed movie presentation and bypass rectangles of the transition from $\mathcal D_1$ to $\mathcal D_2$ is depicted in Figure~\ref{fig:bypass-exchange}.

\begin{figure}[htbp]
\begin{center}
\SetLabels
(.4*0) (1')\\
(0*.2) (2')\\
(1*.2) (2'')\\
(.5*.31) retrograde\\
(.5*.27) bypass move\\
(.4*.4) (3')\\
(0*.6) (4')\\
(1*.6) (4'')\\
(.5*.7) prograde\\
(.5*.64) bypass move\\
(.4*1) (5')\\
\endSetLabels
\strut\AffixLabels{\includegraphics*[width=145mm]{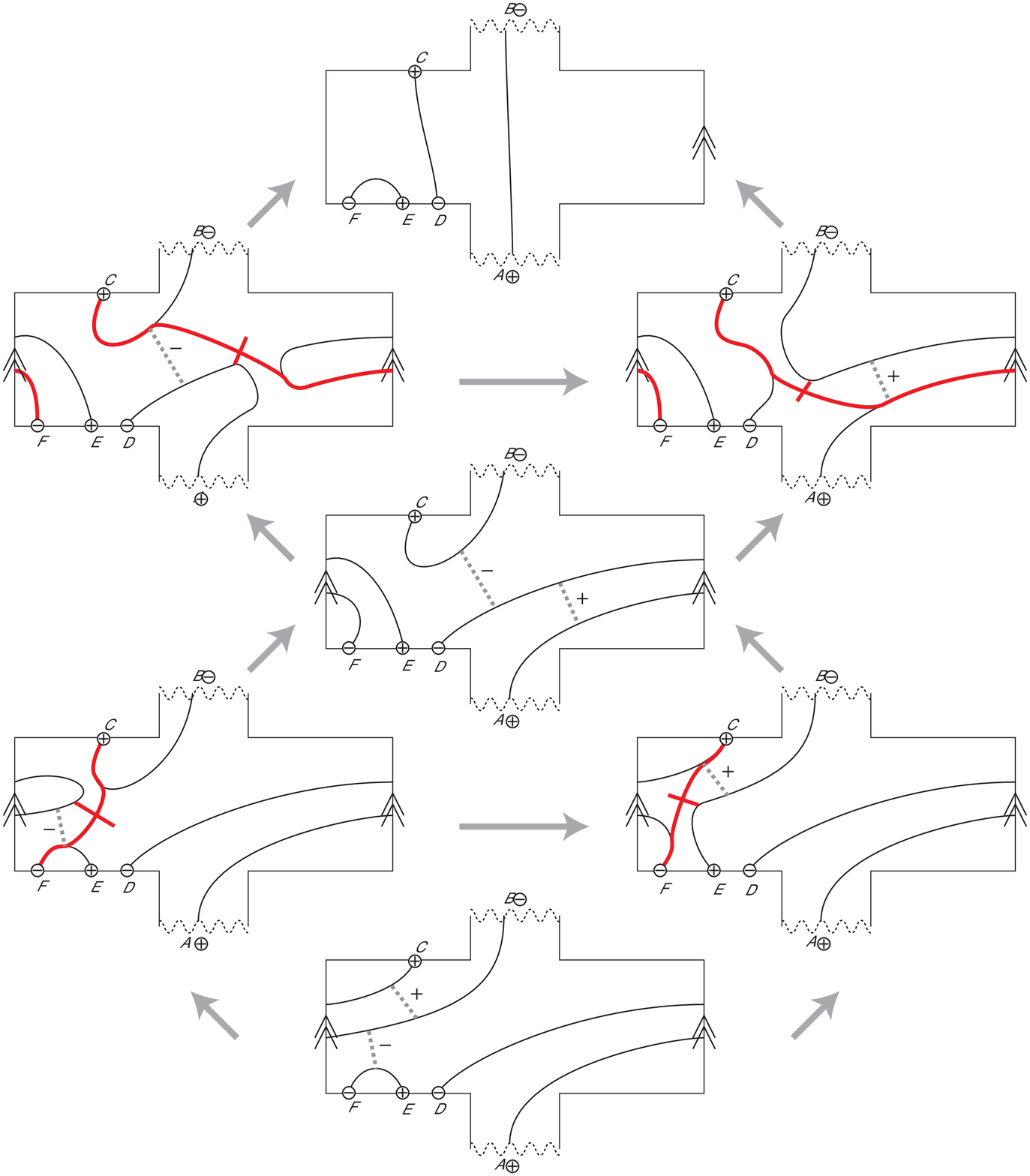}}
\caption{
(1' $\rightarrow$ 2' $\rightarrow$ 3' $\rightarrow$ 4' $\rightarrow$ 5') movie presentation of $\mathcal D_1$.\\
(1' $\rightarrow$ 2'' $\rightarrow$ 3' $\rightarrow$ 4'' $\rightarrow$ 5') movie presentation of $\mathcal D_2$. 
Thick arcs (red) represent bypasses.}
\label{fig:bypass-exchange}
\end{center}
\end{figure} 

Finally we get rid of two bb-tiles of $\mathcal D_2$ that share the elliptic points $D$ and $E$ by an exchange move and we obtain the bi-gon $\mathcal D''$. 
\end{proof}

\section{Split braid theorem and composite braid theorem}
\label{sec:braid theorem}

In this section we prove the split/composite braid theorem 
%In the proof of split and composite closed braid theorem, we will only use 
by using b-arc foliation change and exchange move.

\begin{definition}
Let $L$ be a link in a closed oriented $3$-manifold $M$. 
We say that $L$ is a {\em split link} if there exists a $2$-sphere that separates components of $L$. We call such a sphere a {\em separating sphere} for $L$.

Similarly, we say that $L$ is a {\em composite link} if there exists a $2$-sphere that intersects $L$ in exactly two points and decomposes $L$ as a connected sum of two non-trivial links. We call such a sphere a {\em decomposing sphere} for $L$.
\end{definition} 

The above notions of split/composite link are extended to those for closed braids relative to open books. 
(For braid foliations they are defined in \cite{BM4}.) 

\begin{definition}
Let $L \subset M_{(S, \phi)}$ be a closed braid w.r.t. $(S, \phi)$. 
We say that $L$ is a {\em split/composite closed braid} if there exists a separating/decomposing sphere $F$ for $L$ such that $\F(F)$ has exactly one positive elliptic point, one negative elliptic point and no hyperbolic points, namely $F$ intersects the binding in two points.
\end{definition}

Clearly a split/composite closed braid w.r.t. $(S, \phi)$ is a split/composite link in $M_{(S, \phi)}$, but the converse is not true in general.
This is because a separating/decomposing sphere might be embedded in quite complicated way relative to $(S, \phi)$. 
In fact, for the special case where $M_{(D^2, id)}\simeq S^{3}$ Birman and Menasco construct an example of split link  and its $4$-braid representative that cannot be isotopic to a split closed braid in the complement of the braid axis in \cite[p.116]{BM4}. 
Also in \cite{M} Morton find a $5$-braid representative of a composite link that is not conjugate to a composite $5$-braid. 

However, if we are allowed to use exchange moves the converse holds: 
In \cite{BM4} Birman and Menasco prove that any closed braid representative of a split/composite link in $S^{3}$ with the standard open book $(D^{2},id)$ can be modified to a split/composite braid by applying a sequence of exchange moves. As a corollary, they prove  the additivity of the minimum braid index of  knots and links in $\R^3$.

We extend the above result of Birman and Menasco to closed braids in general open books with additional assumptions. 
Let $C \subset \partial S$ be a boundary component of $S$. We denote by $c(\phi, C)$ the fractional Dehn twist coefficient of $\phi$ w.r.t. $C$, which is defined in \cite{hkm1} (cf.  \cite{GO}).

\begin{theorem}[Split/composite closed braid theorem]% in planar open book]
\label{thm:split/composite}
Let $L$ be a closed braid representative of a split/composite link in $M_{(S,\phi)}$.
Let $F$ be a separating/decomposing sphere for $L$. 
Assume the following: 
\begin{enumerate}
\item
$\F(F)$ is essential and all of whose $b$-arcs are separating.
%The page $S$ is planar.
%\item 
%There is a separating/decomposing sphere $F$ that only intersects the binding component $C_1$.
\item
If the binding component $C\subset \partial S$  intersects $F$ then $|c(\phi, C)| >1$.
\end{enumerate}
Then there exists a sequence of exchange moves of closed braids:
\[ L \rightarrow L_1 \rightarrow \cdots \rightarrow L_m \]
such that $L_m$ is a split/composite closed braid.
\end{theorem}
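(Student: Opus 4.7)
The argument is by strong induction on the number $h(\F(F))$ of hyperbolic points of $\F(F)$. In the base case $h(\F(F))=0$, essentiality rules out c-circles (which would force $bc$- or $cc$-regions, hence hyperbolic points), so every leaf is a b-arc; the Euler-characteristic identity on $F\simeq S^{2}$ gives $e_++e_-=2$, and since $F$ splits or decomposes $L$ it must meet both signs of the binding, forcing $e_+=e_-=1$. Hence $L$ is already a split/composite closed braid.

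For the inductive step, assume $h(\F(F))\geq 1$. The plan is to produce, after finitely many b-arc foliation changes followed by one exchange move, a closed braid $L^{(1)}$ transversely isotopic to $L$ and a sphere $F^{(1)}\simeq S^{2}$ satisfying the hypotheses of the theorem with $h(\F(F^{(1)}))<h(\F(F))$, to which the inductive hypothesis applies. Concretely, we shall apply Theorem~\ref{theorem:localex} at a non-strongly-essential elliptic point $v$ where exactly two regions meet, with opposite signs and admissible types.

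Existence of such a $v$ combines two ingredients. \emph{First}, hypothesis (2) together with the standard relationship in open book foliation theory (cf.\ \cite{ik2}) between strong essentiality of elliptic points on $\partial S$ and the fractional Dehn twist coefficient forces at least one non-strongly-essential elliptic point on $F$: otherwise every elliptic point on a binding component $C$ meeting $F$ would be strongly essential, implying $|c(\phi,C)|\leq 1$ and contradicting (2). \emph{Second}, in general more than two regions may meet at $v$; to reduce the valence at $v$ to two and bring the adjacent regions into opposite-sign, admissible-type configuration, we iterate b-arc foliation changes (Theorem~\ref{thm:folchange}). Each such change preserves the braid isotopy class, the total singularity count, and essentiality, and by Theorem~\ref{thm:folchange}(2)(e) strictly decreases the number of hyperbolic points connected to $v$ by a singular leaf. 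The tree hypothesis of Theorem~\ref{thm:folchange} is furnished by Proposition~\ref{prop:sufficient-conditions}, since every b-arc of $\F(F)$ is separating by hypothesis (1). After finitely many iterations the regions around $v$ alternate in sign, and a non-strongly-essential b-arc incident to $v$ selects the opposite-sign adjacent pair $R_1,R_2$ required by Theorem~\ref{theorem:localex}.

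Applying the exchange move yields $L^{(1)}$ and $F^{(1)}$ with $h(\F(F^{(1)}))=h(\F(F))-2$. Because the move is local and does not alter the open book $(S,\phi)$, hypothesis (2) is inherited automatically and hypothesis (1) is inherited off the simplified disc $D'\subset F^{(1)}$ of Theorem~\ref{theorem:localex}. The main obstacle is to verify rigorously that hypothesis (1) is preserved \emph{globally} throughout the procedure: while Theorem~\ref{thm:folchange}(1) propagates essentiality, the separating property of b-arcs is global and must be tracked through each local rearrangement. In particular, one must check that the new common leaf $R_1'\cap R_2'$ produced by each b-arc foliation change, and any b-arc of $\F(F^{(1)})$ newly bordering $D'$, remain separating, so that Proposition~\ref{prop:sufficient-conditions} continues to apply in subsequent steps. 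Once this bookkeeping is confirmed, the strong induction terminates after at most $h(\F(F))/2$ steps, producing the desired finite sequence of exchange moves.
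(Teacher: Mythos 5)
Your overall strategy (reduce via b-arc foliation changes, then remove a valence-two elliptic point by an exchange move, and induct) matches the paper in spirit, but there are two genuine gaps.

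\textbf{Gap 1: how a non-strongly-essential elliptic point is located.} You assert that if every elliptic point on a binding component $C$ meeting $F$ were strongly essential, Lemma~\ref{lemma:estimate} would give $|c(\phi,C)|\leq 1$, contradicting hypothesis~(2). That inference is not valid. Lemma~\ref{lemma:estimate} gives $-P\leq c(\phi,C)\leq N$ (or $-N\leq c\leq P$), where $P$ and $N$ count the positive and negative hyperbolic points linked to $v$ by singular leaves; for a high-valence vertex these bounds are weak and do not force $|c(\phi,C)|\leq 1$. The bound only becomes contradictory once one has driven a vertex $v$ down to valence $2$ \emph{and} established that its two neighbouring hyperbolic points have opposite signs. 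The paper does this in order: the Euler-characteristic identity~\eqref{eqn:last} produces a vertex of valence $\leq 3$; a b-arc foliation change (valid since the shared b-arc is separating, Proposition~\ref{prop:sufficient-conditions}) lowers it to valence $2$; Lemma~\ref{lemma:sign} (which uses the separating hypothesis again) shows the two incident saddles have opposite signs; and only then does Lemma~\ref{lemma:estimate} combined with hypothesis~(2) force $v$ to be non-strongly-essential so that Theorem~\ref{theorem:localex} applies. Your plan to ``pick a non-strongly-essential $v$ and reduce its valence'' also runs into a further obstruction: a b-arc foliation change requires two adjacent same-sign regions around $v$, and at a vertex of even valence $\geq 4$ the signs may strictly alternate, leaving no admissible pair. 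The Euler-characteristic argument is what guarantees a valence-$\leq 3$ vertex (where pigeonhole supplies an adjacent same-sign pair), and it cannot be skipped.

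\textbf{Gap 2: c-circles in the inductive step.} You dispose of c-circles only in the base case $h=0$ (where the absence of hyperbolic points on a sphere indeed forbids them), but in the inductive step $\F(F)$ may contain c-circle leaves; then the region decomposition includes bc-annuli and cc-pants, the cell-count~\eqref{eqn:last} does not apply directly, and Lemma~\ref{lemma:sign} is unavailable (it assumes all regions at $v$ are bb-tiles). The paper's Case~II handles this: take an innermost bc-annulus $R$ bounding a disc $D$; if $R$ is degenerate, Lemma~\ref{lemma:degeneratebc} contradicts hypothesis~(2); if $R$ is non-degenerate, apply the Case~I argument to $D\setminus R$ (which is tiled by bb-tiles) and reduce $R$ to the degenerate case, again a contradiction. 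This shows c-circles cannot occur under the hypotheses, and that argument is indispensable. Your proposal as written leaves this entirely untreated.

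You are right to flag that preservation of hypothesis~(1) through each move needs care; the paper's Theorem~\ref{thm:folchange}(1) addresses essentiality, and the separating property survives because the new b-arcs produced are sub-arcs of or isotopic to old separating leaves, but that deserves an explicit check. The induction variable ($h$ versus the paper's $e(F)$) is an inessential difference since both drop by $2$ under an exchange move.
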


\begin{remark}
Before proceeding to a proof, we give several remarks on the assumptions and the statement of Theorem \ref{thm:split/composite}.

\begin{enumerate}
\item[(i)] The braid $L_{m}$ is split/composite and transversely isotopic to $L$. However, we do not assert that a separating/decomposing sphere $F_m$ for $L_m$ is isotopic to $F$. 
%\marginpar{\tiny can't understand the 2nd sentence of (i)}
\item[(ii)] 
If $(S, \phi)$ has connected binding then by \cite[Thm 7.2]{ik2} the conditions (1), (2) imply that the sphere $F$ (hence $F_m$) bounds a $3$-ball in $M$. 
%If $(S, \phi)$ has connected binding then by \cite[Thm 7.2]{ik2} the conditions (1), (2) imply that the sphere $F$ bounds a $3$-ball in $M$. Similarly, the conditions (1), (2) imply that a separating/decomposing sphere $F_m$ for $L_m$ always bounds a $3$-ball in $M$.

%\item[(iii)]
%If the page surface $S$ is planar, $\F(F)$ is essential  and the sphere $F$ intersects only one binding component then the condition (1) is satisfied. 
%\marginpar{\tiny (iii) sounds trivial and dull. Should we remove or restate this? OK. I removed previous (iii).}

\item[(iii)] In braid foliation theory the condition (1) always holds but $c(id, \partial D^{2})=0$. 
To treat braid foliation case uniformly, it is often convenient to regard $c(id,\partial D^{2})=+\infty$. 
This is also true for other results like Corollaries 7.3, 7.4, and Theorem 8.3 in \cite{ik2}. 
%can be applied to the case $(D^{2},id)$ under this convention.

%\marginpar{\tiny KK doesn't agree with the 2nd sentence of (iv). She doesn't like to upset HKM. (Aug18) I added more explanation.}  
\end{enumerate}
\end{remark}

%\begin{remark}[A sufficient condition]
%If the page surface $S$ is planar and the sphere $F$ intersects only one binding component then the condition (1) above is satisfied. 
%\end{remark}
%We remark that the assumptions (1) and (2) in the split/composite closed braid theorem imply that $F$ bounds a $3$-ball in $M$, although we do not assume irreducibility on $M$. 

%\begin{remark}
%If $(S, \phi)$ has connected binding then by \cite[Thm 7.2]{ik2} the conditions (1), (2) imply that the sphere $F$ bounds a $3$-ball in $M$. 
%\end{remark}  

%This follows from \cite[Theorem 7.2]{ik2}: If $F$ is an essential 2-sphere in a 3-manifold $M_{(S,\phi)}$ that intersects only one boundary component $C_1$, then $|c(\phi,C_1)| \leq 1$.

%The key feature of our assumption is that the page surface $S$ of the open book is planar and $F$ intersects only a single binding component. This implies that 
%\begin{description}
%\item[$(*)$]
%all the $b$-arcs of $\F(F)$ are separating.
%\end{description}
%We use the property $(*)$ many times below.
%\begin{remark}
%In braid foliation theory the condition (1) always holds  but $c(id, \partial D^{2})=0$. In an open book foliation theory, it is often convenient to regard $c(id,\partial D^{2})=+\infty$. 
%\end{remark}

\begin{example}\label{ex:movie-sphere}
In general without assuming the conditions (1) or (2), there may exist a closed braid representative $L$ of a split/composite link type 
whose separating/decomposing sphere does not admit a sequence of exchange moves that turns $L$ into a split/composite closed braid.

For example let $\phi= \id_S$ (i.e., $c(\phi, C)=0$) and $F$ be a splitting sphere of $L$ defined by the movie presentation in Figure~\ref{fig:movie-sphere}. 
The open book foliation $\F(F)$ consists of two bb-tiles. 
Since all the b-arcs are strongly essential $F$ does not admit exchange moves.

%But there may be a chance that $L$ admits an exchange move in the sense of Definition~ \ref{def:exchange}-(iii). 
\end{example} 
\begin{figure}[htbp]
\begin{center}
%\ShowGrid
\SetLabels
(.07*.45) $\star$\\
(.15*.53) $\star$\\
(.28*.45) $\star$\\
(.3*.53) $\star$\\
(.18*.57) $\diamond$\\
(.22*.45) $\diamond$\\
(.25*.4) $\diamond$\\
(.37*.8) $\star$\\
(.45*.88) $\star$\\
(.58*.8) $\star$\\
(.6*.88) $\star$\\
(.58*.95) $\diamond$\\
(.52*.75) $\diamond$\\
(.55*.75) $\diamond$\\
(.67*.45) $\star$\\
(.7*.53) $\star$\\
(.88*.45) $\star$\\
(.9*.53) $\star$\\
(.78*.55) $\diamond$\\
(.8*.45) $\diamond$\\
(.85*.4) $\diamond$\\
(.37*.1) $\star$\\
(.45*.18) $\star$\\
(.58*.1) $\star$\\
(.6*.18) $\star$\\
(.58*.25) $\diamond$\\
(.52*.05) $\diamond$\\
(.55*.05) $\diamond$\\
\endSetLabels
\strut\AffixLabels{\includegraphics*[width=135mm]{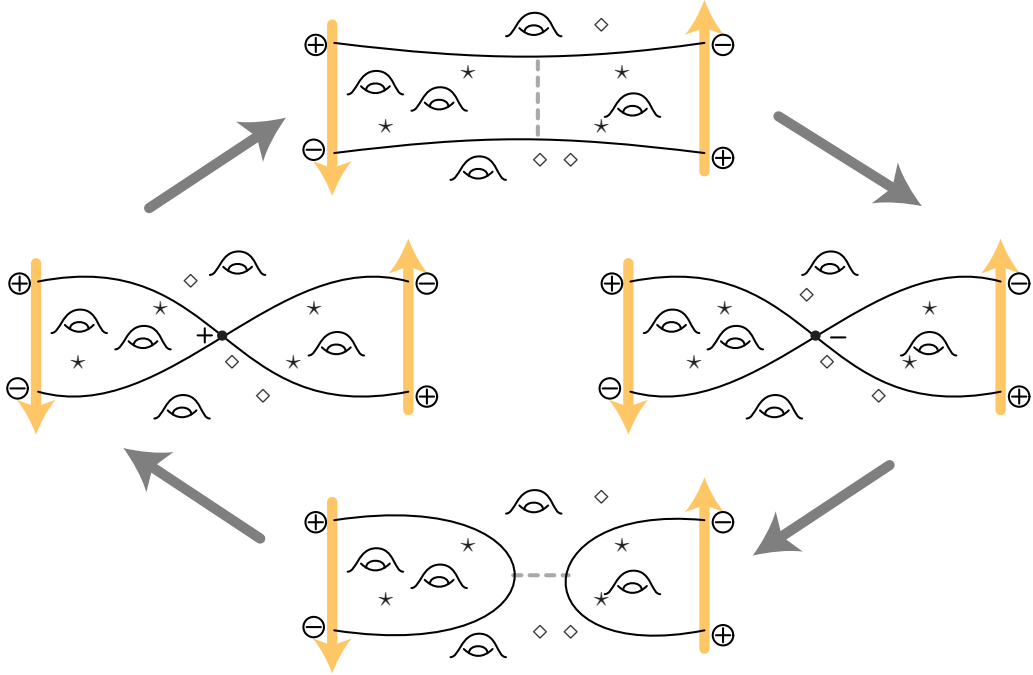}}
\caption{(Example~\ref{ex:movie-sphere}) 
A movie presentation of the separating sphere $F$ where $\star$ and $\diamond$ represent distinct  components of $L$ separated by $F$. 
}
\label{fig:movie-sphere}
\end{center}
\end{figure} 

We have three lemmas, where the  conditions (1) or (2) are not assumed.  
The first lemma is proven in \cite{ik2}. 

\begin{lemma}\label{lemma:estimate}\cite[Lemma 5.1]{ik2}
Let $(S, \phi)$ be a general open book and $F$ a closed, incompressible surface in $M_{(S, \phi)}$.
Let $v$ be a strongly essential elliptic point of $\F(F)$ that lies on a boundary component $C \subset \partial S$, and $P$ (resp. $N$) be the number of the positive (resp. negative) hyperbolic points that are connected to $v$ by a singular leaf. Then 
\[ 
\left\{
\begin{array}{ll}
-P \leq c(\phi,C) \leq N  & \text{ if } \sgn(v)=-1 \\
-N \leq c(\phi,C) \leq P  & \text{ if } \sgn(v)=+1.
\end{array}
\right.
\]
\end{lemma}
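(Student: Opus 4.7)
The plan is to interpret the fractional Dehn twist coefficient $c(\phi,C)$ via the rotation number of $\phi$ near the boundary component $C$ on which $v$ lies, and to relate that rotation number to the combinatorics of the b-arcs ending at $v$. The key observation is that, since $v$ is strongly essential, every b-arc ending at $v$ in any page $S_t$ exits a small collar of $C$ into the interior of $S_t$, so the b-arcs at $v$ define a well-defined linear/cyclic arrangement at $v$ that can be tracked as $t$ varies.

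First I would fix a small transverse reference arc near $v$ (or equivalently lift the angular positions to the universal cover of $C$) and record, for each regular value of $t$, the integer-valued ``winding positions'' at $v$ of the b-arcs on the page $S_t$. For $t$ in a regular subinterval these positions are constant. When $t$ crosses a singular page $S_{t_h}$ whose hyperbolic point $h$ is connected to $v$ by a singular leaf, the arrangement changes according to the model neighborhood of the region type ($ab$, $bb$, or $bc$): two b-arcs incident to $v$ are locally merged, split, created, or destroyed. A case-by-case analysis of the four combinations of $\sgn(v)\in\{\pm1\}$ and $\sgn(h)\in\{\pm1\}$ should show that each such hyperbolic point shifts the winding positions at $v$ by exactly one unit in a direction determined by the pair of signs; concretely, when $\sgn(v)=-1$ a positive $h$ pushes b-arcs ``forward'' in the direction of the Dehn twist and a negative $h$ pushes them ``backward,'' with the roles reversed for $\sgn(v)=+1$.

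Closing up the parameter loop via the monodromy $\phi$, the accumulated shift of the winding positions over $t \in [0,1]$ equals $c(\phi,C)$ (by the very definition of the fractional Dehn twist coefficient via rotation number on $C$). The asserted one-sided bounds then follow because the number of b-arcs at $v$ in every page is a non-negative integer: the net signed shift cannot exceed the total positive contribution or fall below the negated total negative contribution without forcing a negative arc-count at some intermediate page. When $\sgn(v)=-1$ this yields $-P \leq c(\phi,C) \leq N$, and the case $\sgn(v)=+1$ is completely symmetric, giving $-N \leq c(\phi,C) \leq P$. I expect the main obstacle to be the case analysis in the second step: one must carefully verify, from each local model of a hyperbolic point connected to $v$, the precise signed effect on the winding positions, and reconcile the sign conventions with the normalization of $c(\phi,C)$ used in \cite{hkm1}.
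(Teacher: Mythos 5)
The paper does not contain its own proof of this lemma; it is quoted from \cite{ik2}, so there is no internal argument to compare against directly. That said, your overall strategy---tracking the unique b-arc $b_t$ ending at $v$ in each regular page $S_t$ (since $F$ is closed and $v$ is elliptic there is exactly one such leaf per regular page and it is a b-arc), recording its winding near $C$, and identifying the accumulated change over $t\in[0,1]$ with $c(\phi,C)$ via the monodromy---is the kind of translation-number argument that \cite{ik2} uses, and strong essentiality is exactly the hypothesis that makes this winding well defined. So far, so good.

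The gap is in your per-singularity claim. You assert that every hyperbolic point $h$ connected to $v$ by a singular leaf shifts the winding by \emph{exactly} one unit, with the sign determined by $\sgn(h)$ and $\sgn(v)$. If that were so, the accumulated shift would be a fixed integer, $\pm(P-N)$, and the lemma would be an equality rather than a two-sided inequality. Concretely, take $\sgn(v)=-1$ and $N=0$: the lemma asserts $-P\le c(\phi,C)\le 0$, an interval of admissible values, whereas your claim forces a single value; worse, with your stated sign convention (``a positive $h$ pushes forward in the direction of the Dehn twist'') that single value would be $+P$, violating the upper bound $c(\phi,C)\le 0$. The correct local model must give a \emph{one-sided} constraint: for $\sgn(v)=-1$, a positive (resp.\ negative) hyperbolic point connected to $v$ shifts the winding by an amount in $\{-1,0\}$ (resp.\ $\{0,+1\}$). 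The direction of a nonzero shift is pinned down by the signs, but the shift may vanish, and that slack is precisely why the conclusion is an inequality and not an equality. Your final step, deriving the bound from ``the arc-count cannot go negative,'' is also not the right mechanism; the bound follows by summing per-singularity contributions that are each bounded by one in magnitude and constrained to one side, not from positivity of a count of arcs.
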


Here is the second lemma: 

\begin{lemma}
\label{lemma:sign}
Let $v$ be an elliptic point in the open book foliation $\F(F)$.
Assume that all the regions meeting at $v$ are bb-tiles, and that all the b-arcs that end on $v$ are separating.
Then there exist both positive and negative hyperbolic points connected to $v$ by a singular leaf. 
\end{lemma}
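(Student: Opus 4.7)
The plan is to proceed by contradiction. Assume that all hyperbolic points $h_1, \dots, h_k$ connected to $v$ by a singular leaf share a common sign $\varepsilon \in \{+1, -1\}$.

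First, set up notation along the lines of the proof of Proposition~\ref{prop:sufficient-conditions}. Going around $v$, list the bb-tiles as $R_1, \dots, R_k$ with shared b-arc edges $b^{(0)}, \dots, b^{(k-1)}$ at $v$, where $b^{(i)}$ is the edge common to $R_i$ and $R_{i+1}$ (indices mod $k$). Each $R_i$ has four corners $v, w^{(i-1)}, u_i, w^{(i)}$ with alternating signs around the square, a saddle $h_i$, and a describing arc $\gamma_i \subset S_{t_{h_i}}$ joining the two ``before'' b-arcs $b^{(i-1)}$ (at $v$) and an auxiliary arc $c^{(i-1)}$ (not at $v$).

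Next, invoke the separating hypothesis to place the picture in the planar framework of Proposition~\ref{prop:sufficient-conditions}. For each $i$, $b^{(i-1)}$ separates the page into two components, and the companion arc $c^{(i-1)}$ (being disjoint from $b^{(i-1)}$) lies entirely in one of them, call it $D_i$; the describing arc $\gamma_i$ must also lie in $D_i$, up to a thin collar of $b^{(i-1)}$. A local orientation argument at the saddle, using $\sgn(h_i)=\varepsilon$ together with the alternating signs of the corners of $R_i$, identifies $D_i$ as a specific, fixed side. Since the sign is uniform around the cycle, the same side choice propagates, so the superimposed graph $\bigcup_i \ul{b^{(i)}} \cup \bigcup_i \ul{\gamma_i}$ embeds in $S$ in the planar way described in the proof of Proposition~\ref{prop:sufficient-conditions}.

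Finally, extract the contradiction from cyclic closure: unlike the pair of tiles treated in Proposition~\ref{prop:sufficient-conditions}, here the chain wraps all the way around $v$, so the embedded graph is forced to contain a closed loop; but by the same argument as in Proposition~\ref{prop:sufficient-conditions} this graph should be a tree, and trees have no cycles. The main obstacle is the orientation bookkeeping in the middle step, namely showing that $\sgn(h_i) = \varepsilon$ really pins $\gamma_i$ to the same side of $b^{(i-1)}$ consistently across $i$. This is the open-book analogue of the parity argument used in braid foliation theory to force alternating signs at an elliptic point, but it must be handled with care because $\partial S$ may have multiple components, the pages are more complex than a disc, and the notion of ``side'' depends on the separating structure of $S$ rather than the convex shape of a disc.
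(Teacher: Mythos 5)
Your high-level plan (argue by contradiction, exploit the separating hypothesis to locate describing arcs consistently on one side of the ambient b-arc, then derive a contradiction from the cyclic closure around $v$) is indeed the same germ as the paper's argument. But as written your proof has two real gaps, and the second step--which you yourself flag as ``the main obstacle''--is exactly where the paper does its work while you leave it unresolved.

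First, the ``tree vs.\ loop'' contradiction is not actually established. Proposition~\ref{prop:sufficient-conditions} concludes tree-ness of the superimposed graph for a \emph{single pair} of adjacent tiles sharing one separating b-arc; the two sub-trees $\ul{l_1}\cup\ul{l_3}\cup\ul{\gamma_1}$ and $\ul{\gamma_2}\cup\ul{l_5}$ live in complementary components of $S\setminus\ul{b}$, and planarity follows. Nothing in that argument shows that the union of graphs over an entire \emph{cycle} of $n$ tiles around $v$ is a tree. Indeed you remark that the cyclic chain forces a closed loop in the graph---which, absent a separate argument for tree-ness, is not a contradiction, merely a description of the graph. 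You would have to show that each $\ul{\gamma_i}$ lies strictly in the ``smaller'' side cut off by $\ul{b^{(i-1)}}$, obtain a genuine strict nesting of those sides, and then observe that cyclic closure is impossible. That is a nesting argument, not a tree argument, and it is what the paper does: it defines $S'_i$ as the subsurface of $S_{t_i}$ on one side of $b_i$, shows $\sgn(h_i)=+1$ forces the describing arc (and hence the next elliptic point $w_{i+1}$) to lie in $S'_i$, concludes $S'_1\supsetneq S'_2\supsetneq\cdots\supsetneq S'_n$, and finally notes $w_1\in S'_n$ contradicts $w_1\notin S'_2$.

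Second, you explicitly leave open the orientation bookkeeping needed to pin $\gamma_i$ uniformly to one side of $b^{(i-1)}$. That is not a peripheral detail; it is the entire mechanism of the proof. The paper handles it by observing that, once $b_i$ is separating, $v$ and $w_i$ lie on a common binding component, and a standard sign computation (the paper cites the argument of Lemma 5.1 of \cite{ik2}) places the describing arc of $h_i$ on the fixed side determined by $\sgn(v)$ and $\sgn(h_i)$. Without carrying out this step, your argument never gets a consistent nesting or a planarity statement to contradict. So the proposal is on the right track conceptually, but it needs to replace the appeal to Proposition~\ref{prop:sufficient-conditions} with the direct nesting argument, and it needs to actually do the sign computation rather than flag it as an obstacle.
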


\begin{proof}
Let $h_1,\ldots,h_n$ be the hyperbolic points that is connected to $v$ by a singular leaf. 
We assume that $\sgn(v)=-1$ and $\sgn(h_i)=+1$ for all $i=1,\ldots,n$ (parallel arguments hold for other cases) and deduce a contradiction. 

Let $w_1,\ldots,w_n$ be the positive elliptic points that are connected to $v$ by a b-arc and ordered clockwise,  see Figure~\ref{fig:sepb}. 
Let $b_i$ be a b-arc in the page $S_{t_i}$ connecting $w_i$ and $v$, so $0<t_1<t_2< \cdots <t_n < 1$.  
\begin{figure}[htbp]
\begin{center}
%\ShowGrid
\SetLabels
\endSetLabels
\strut\AffixLabels{\includegraphics*[scale=0.5, width=100mm]{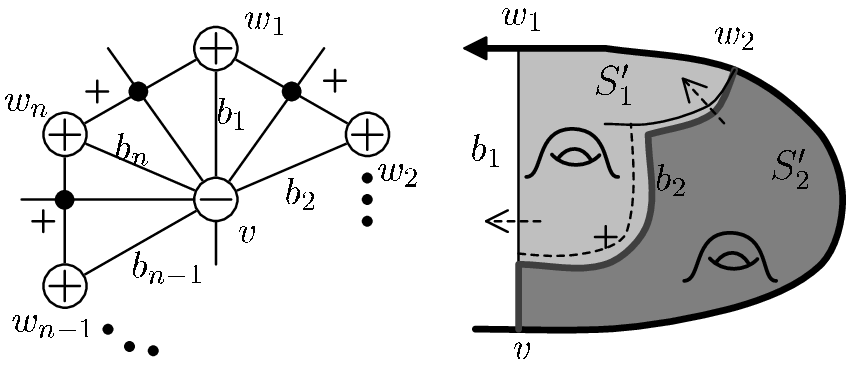}}
\caption{Lemma~\ref{lemma:sign}.}\label{fig:sepb}
\end{center}
\end{figure}
Since $b_i$ is separating the elliptic points $v$ and $w_i$ lie on the same binding component,  
i.e., $v$ and $w_1,\ldots,w_n$ lie on the same binding component. 
Let $S'_i \subset S_{t_i}$ be the subsurface that lies on the left side of $b_i$ as we walk from $w_i$ to $v$.
Since $\sgn(h_i)=+1$ by a standard argument (or the  argument as in the proof of Lemma 5.1 in \cite{ik2}) the describing arc of $h_i$ is contained in $S'_i$. 
Therefore $w_{i+1} \in S'_i$, hence $S'_i \supsetneq S'_{i+1}$ (see Figure \ref{fig:sepb}). 
In particular $w_{1} (=w_{n+1}) \in S'_n$. 
However, $S'_1 \supsetneq S'_2 \supsetneq \cdots \supsetneq S'_n$ and $w_1 \in S'_1\setminus S'_2$.  This is a contradiction. 
\end{proof}

Here is the third lemma.

\begin{lemma}
\label{lemma:degeneratebc}
Let $F \subset M_{(S, \phi)}$ be a closed incompressible surface in the complement of a closed braid $L$. 
We may assume that $\F(F)$ is essential by \cite[Thm 3.2]{ik2}.
Let $R$ be a degenerate bc-annulus in $\F(F)$, see Figure~\ref{fig:degeneratebc}.
Let $\mathcal{C} \subset S_{t_0}$ be the c-circle boundary of $R$ and $C \subset \partial S$ be a binding component that intersects $R$. 
If all the b-arcs in $\F(R)$ are separating then $\mathcal{C}$ is essential in $S_{t_0}$ and $|c(\phi,C)| \leq 1$.
\begin{figure}[htbp]
\begin{center}
%\ShowGrid
\SetLabels
(.05*.88) $\mathcal C$\\
(.65*.88) $\mathcal C$\\
\endSetLabels
\strut\AffixLabels{\includegraphics*[width=90mm]{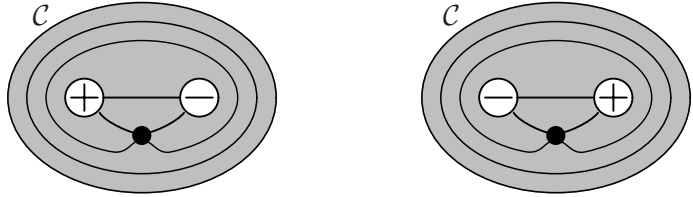}}
\caption{Degenerate bc-annuli $R$.}
\label{fig:degeneratebc}
\end{center}
\end{figure} 
\end{lemma}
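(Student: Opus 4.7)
The plan is to combine a local topological analysis of the degenerate bc-annulus with the elliptic-point bound in Lemma~\ref{lemma:estimate}.

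First I set up the structure: a degenerate bc-annulus $R$ has a single elliptic point $v$ on the binding component $C$, a single hyperbolic point $h$, and the c-circle $\mathcal{C} \subset S_{t_0}$ as its outer boundary. As $t$ varies across $R$, the foliation $\F(R)$ interpolates between b-arc loops based at $v$ and nested c-circles parallel to $\mathcal{C}$, with the transition occurring at $h$. Because $F$ is closed and $\F(F)$ is essential, essentiality and strong essentiality coincide, so each b-arc loop at $v$ is not null-homotopic in its page.

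For the essentiality of $\mathcal{C}$ in $S_{t_0}$, I would argue by contradiction. Suppose $\mathcal{C}$ bounds a disc $D_0 \subset S_{t_0}$; by an innermost-disc argument among leaves of $F \cap S_{t_0}$ one may assume $\Int(D_0)$ is disjoint from $F$. The c-circles of $\F(R)$ in pages near $S_{t_0}$ are isotopic in their pages to $\mathcal{C}$ and hence also bound discs. Tracking these bounding discs as $t$ crosses the critical value of $h$, the disc transitions through the saddle to become a disc in a page whose boundary is a b-arc loop at $v$ together with a short arc of $C$ at $v$. This exhibits the b-arc loop as boundary-parallel, contradicting strong essentiality. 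The separating hypothesis on the b-arcs in $\F(R)$ is what ensures this transition is controlled, so that the limiting disc lies on the correct side of the b-arc loop rather than twisting onto itself.

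For the bound $|c(\phi, C)| \leq 1$, I observe first that $v$ is strongly essential in the sense of Definition~\ref{def of essential}, since every b-arc of $\F(F)$ ending at $v$ is strongly essential. Since $v$ sits in the disc-interior of $R$ and the link of $v$ in $F$ is covered by arcs of b-arc loops in $\F(R)$, the region $R$ is the only region of $\F(F)$ adjacent to $v$. Consequently the unique hyperbolic point of $\F(F)$ connected to $v$ by a singular leaf is $h$. With the notation of Lemma~\ref{lemma:estimate}, either $(P,N) = (1,0)$ or $(P,N) = (0,1)$, so $|c(\phi, C)| \leq \max(P, N) = 1$.

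The main obstacle is the first part: rigorously tracking the bounding disc across $h$ to exhibit a boundary-parallel b-arc loop. The separating condition on b-arcs in $\F(R)$ is precisely what prevents the disc from twisting as it passes through $h$ (playing a role analogous to its role in Theorem~\ref{thm:folchange}); without this hypothesis the deformation need not land in a single sub-region of the page and the argument breaks down.
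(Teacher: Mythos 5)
Your argument for the bound $|c(\phi,C)| \leq 1$ is essentially sound and in fact takes a small shortcut relative to the paper: since $F$ is closed, $\partial F = \emptyset$, so in Definition~\ref{def of essential} essentiality and strong essentiality of $b$-arcs already coincide, and the elliptic point(s) of $R$ have valence $1$; Lemma~\ref{lemma:estimate} then gives $|c(\phi,C)|\leq 1$ directly. (The paper instead routes through the statement ``$\mathcal{C}$ essential $\Rightarrow$ $b$-arcs of $R$ strongly essential'' citing \cite[Claim 6.8]{ik2}, which is a touch longer.) A minor imprecision: a degenerate bc-annulus has two interior elliptic points, of opposite sign, not one.

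The argument for the first conclusion, however, has a genuine gap at its central step. You propose to track the disc $\Delta_t$ bounded by the c-circle $\mathcal{C}_t$ as $t$ crosses the saddle and conclude that it ``becomes a disc whose boundary is a $b$-arc together with a short arc of $C$'', showing the $b$-arc is boundary-parallel. This is not what happens. The bc-saddle merges $\mathcal{C}_t$ with a $b$-arc $b_t$ (which exists in every page through the two elliptic points) into a single $b$-arc $b_t'$. Since $\Delta_t$ is disjoint from $b_t$, the merged arc $b_t'$ is isotopic rel endpoints to $b_t$ by sliding across $\Delta_t$ and the saddle band; in particular $b_t'$ is boundary-parallel if and only if $b_t$ already was. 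The fact that $\mathcal{C}_t$ bounds a disc imposes no constraint of that kind on $b_t$, and one can have $\mathcal{C}_t$ bounding a disc while $b_t$ and $b_t'$ remain strongly essential. So the contradiction you aim for does not appear. The paper's actual proof is of a different nature and uses exactly the pieces you do not invoke: incompressibility of $F$ forces $\Delta_{t_0}$ to be pierced by $L$; the separating hypothesis on the $b$-arcs yields, for each page, a subsurface $S_t'$ on one side of $b_t$ disjoint from $\Delta_t$, so $R\cup\Delta_{t_0}$ bounds a compact $3$-dimensional region $M'\subset M$; and the fact that $L$ is a closed braid, hence positively transverse to every page, forces the algebraic intersection $L\cdot(R\cup\Delta_{t_0})$ to be strictly positive, contradicting that this intersection number must vanish for the boundary of $M'$. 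Without the braid-positivity and incompressibility inputs, a purely page-by-page tracking of the bounding disc cannot close the argument.
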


\begin{proof}
Assume contrary that $\mathcal{C}$ bounds a disc $\Delta_{t_0} \subset S_{t_0}$, i.e., every c-circle of $\F(R)$ also bounds a disc $\Delta_t \subset S_t$. 
Since $F$ is incompressible in $M-L$, the disc $\Delta_{t_0}$ must be pierced by $L$ at least once. 
Since each $b$-arc $b_t \subset S_t \cap R$ is separating, 
$b_t$ cobounds a subsurface $S'_{t} \subset S_t$ that is disjoint from $\Delta_t$. 
Hence $R \cup\Delta_{t_0}$ bounds a compact region $M' \subset M$ which is the union of $S'_{t}$'s  and discs $\Delta_t$'s. 
Thus the algebraic intersection number of $L$ and $R \cup\Delta_{t_0}$ must be zero. 

On the other hand, since $L$ is a closed braid all the intersections of $L$ with $\Delta_{t_0}$ are positive. But  $L$ and $R$ never intersect, thus the algebraic intersection number of $L$ and $R \cup\Delta_{t_0}$ must be positive, which is a contradiction. 
This concludes that $\mathcal{C}$ is essential in $S_{t_0}$.

Moreover, if $\mathcal{C}$ is essential then all the b-arcs in $R$ are strongly essential (see \cite[Claim 6.8]{ik2}), hence by Lemma \ref{lemma:estimate} we have $|c(\phi,C)| \leq 1$.
\end{proof}

Now we are ready to prove Theorem~\ref{thm:split/composite}. 
Our proof is similar to Birman-Menasco's original one \cite{BM4}, but ours requires more careful and different approach, especially when we show nonexistence of c-circles (in {\bf Case II} below).  
More importantly, we need to be aware of the homotopical properties of b-arcs: essential, strongly essential or separating, 
since these properties are assumptions for b-arc foliation change and exchange move.

\begin{proof}[Proof of the split closed braid theorem] 

{$ $} 
\\

Let $F$ be a separating $2$-sphere % that intersects the  binding only with $C_1$. By \cite[Theorem 3.2]{ik2} we may assume that $\F(F)$ is essential. 
with the essential open book foliation $\F(F)$. 
Let $e(F)$ be the number of elliptic points of $\F(F)$. We prove the theorem by induction on $e(F)$. 
We show that if $L$ is not a split closed braid (i.e., $e(F)>2$) then after applying a b-arc foliation change and an exchange move $e(F)$ decreases. Eventually we obtain $e(F)=2$, that is, $L$ is a split closed braid.
We study the following two cases:\\

{\bf Case I: $\F(F)$ contains no c-circle leaves}\\

In this case, the region decomposition of $F$ consists of bb-tiles only and it induces a cell decomposition of $F$. 
Let $V(i)$ ($i > 1$) be the number of $0$-cells (elliptic points) of valence $i$, $E$ the number of $1$-cells, and $R$ the number of $2$-cells (bb-tiles).
By the definition of bb-tiles, the valence of a 0-cell, $v$, is equal to the number of hyperbolic points that is connected to $v$ by a singular leaf. 
Notice that $V(1) = 0$ because existence of a 0-cell of valence $1$ implies existence of a degenerate bb-tile  which never exists. 
Since each $1$-cell is a common boundary of distinct two $2$-cells and each $2$-cell has distinct four $1$-cells on its boundary we have:
\begin{equation}
\label{eqn:ER}
2E=4R 
\end{equation}
Since the end points of each $1$-cell are distinct two $0$-cells we have:
\begin{equation}
\label{eqn:VE}
\sum_{i>1} i V(i) = 2E 
\end{equation}
The Euler characteristic of $F$ is:
\begin{equation}
\label{eqn:euler}
 \sum_{i>1} V(i) - E + R = \chi(F)=2
\end{equation}
From (\ref{eqn:ER}), (\ref{eqn:VE})  and (\ref{eqn:euler}), we get: 
\begin{equation}
\label{eqn:ecequality}
 \sum_{i> 1}(4-i)V(i) = 8
\end{equation}
The equality (\ref{eqn:ecequality}) implies: 
\begin{equation}
\label{eqn:last}
2V(2) + V(3) = 8 + \sum_{i \geq 4}(i-4)V(i) 
\end{equation} 
This shows that there exist vertices of valence $\leq 3$.

Assume that $v$ has valence 3. Let $h_1, h_2, h_3$ be the hyperbolic points that is connected to $v$ by a singular leaf. 
We may assume that $\sgn(h_1)=\sgn(h_2)$.  
Let $R_i$ denote the bb-tile that contains $h_i$. 
By Condition (1), the common b-arc of $R_1$ and $R_2$ is separating, so by Proposition~\ref{prop:sufficient-conditions} and Theorem~\ref{thm:folchange} we can apply a b-arc foliation change to $R_1 \cup R_2$, which lowers the valence of $v$ but preserves $e(F)$ and no c-circles are introduced.  

Hence we may assume that there exists a vertex of valence $=2$. 
Call it $v$. 
Let $C$ be the boundary component of $S$ on which $v$ lies.
By Condition (1) and Lemma \ref{lemma:sign} the two hyperbolic points around $v$ have opposite signs. 
If $v$ is strongly essential, Lemma \ref{lemma:estimate} implies $|c(\phi,C)| \leq 1$. 
This contradicts the condition (2), so $v$ is non-strongly essential. Hence by an exchange move on $\F(F)$ that involves an exchange move on $L$  we can remove $v$ and get a new splitting sphere $F'$ with $e(F') = e(F)-2$. 
We can repeat this procedure until we get $F$ with $e(F)=2$. 
\\

{\bf Case II: $\F(F)$ contains c-circle leaves}\\

In this case the region decomposition of $F$ contains  bc-annuli (and possibly cc-pants). 
Let $R$ be an {\em innermost} bc-annulus, here by `innermost' we mean that the c-circle boundary of $R$ bounds a disc $D$ such that $R\subset D \subset F$ and $D \setminus R$ contains no c-circles. Because $F$ is a sphere such $R$ necessarily exists and also a cc-pants cannot be innermost. 

If $R$ is degenerate (i.e., $D=R$) then by Lemma~ \ref{lemma:degeneratebc} we get a contradiction. 

Suppose that $R$ is non-degenerate. Then the region decomposition of $D_\circ:=D\setminus R$ consists only of bb-tiles. 
We can verify that the formula (\ref{eqn:last}) also holds for $\F(D_\circ)$. We apply a similar argument as in (Case I) to $D_\circ$ repeatedly until all the 0-cells in $\Int(D_\circ)$ disappear. Now the region $R$ is a degenerate bc-annulus, which is a contradiction. 

Therefore, under the conditions (1), (2) of the theorem, $\F(F)$ actually does not contain c-circles.  
\end{proof}

\begin{proof}[Proof of the composite closed braid theorem]
We prove the composite closed braid theorem in the same way as the split closed braid theorem (SCBT). % so we only give an outline of the proof.
The main difference between the two theorems is that a decomposing sphere $F$ has intersections  with $L$ but a splitting sphere does not.

% First we observe that $F$ admits an essential open book foliation. 
%\marginpar{Modified}
%This can be done by regarding $F$ as a splitting 2-sphere that separates $L$ into two components $L_1$ and $L_2$. (Or, we may directly prove this by 
By the same argument as in the embedded surface case \cite[Theorem 3.2]{ik2},
using Novikov-Roussarie-Thurston's general position argument \cite{t} we can put $F$ so that it admits an essential open book foliation.

If the region decomposition of $F$ consists only of bb-tiles the above equality (\ref{eqn:last}) holds. By the same argument as in (Case I) we may assume that $V(2)>0$. 
Except for the case $V(2)=4$ and $V(i)=0$ for $i=3, 4, \ldots$, we can move the intersection points $L \cap F$ by following the guideline in \cite[Lem 1]{BM4e} outside the region we attempt to apply an exchange move (the shaded region in Figure~\ref{fig:fol_exchange}-(1)). 
Then we apply an exchange move. The number $e(F)$ decreases by $2$ and no new c-circles are introduced.  
We repeat this procedure until $F$ satisfies $V(2)=4$ and $V(i)=0$ for $i=3, 4, \cdots$. 
This case is depicted in \cite[Fig 22]{BM4} by Birman and Menasco. Only the difference is the two b-arcs joining $p_2, p_3$ and $p_1, p_3$ in that figure may be strongly essential in our situation. By the argument in \cite[p.136]{BM4} our sphere $F$ admits one more exchange move and we obtain $e(F)=2$. 

%Then we can apply b-arc foliation change and exchange moves in the same way to the regions with empty intersection with $L$. On the other hand, the intersection $L \cap F$ is more flexible: by sliding the intersection points $L \cap F$ if necessary, we may move intersection points from one to other regions. This particularly means that if the region decomposition of $F$ has sufficiently many regions, there are no obstructions to apply b-arc foliation changes and exchange moves, so the same argument in the proof of splitting closed braid theorem works in most cases. 

%To complete the proof, we need to treat the case that the region decomposition of $F$ has a few regions (for example, the case $\F$ contains two bb-tiles). In such cases we need to take an effect of intersection points into account. The case when $\F(F)$ contains no c-circles is discussed in \cite{BM4,BM4e} in detail, and the same argument applies to modify the proof of the SCBT.

We need to treat the case where $\F(F)$ contains c-circles.   
Let $R \subset F$ be an innermost bc-annuli. 
As in the proof of the SCBT, after exchange moves and b-arc foliation changes $R$ becomes a degenerate bc-annulus. 
By the proof of Lemma~ \ref{lemma:degeneratebc}, $R$ must have one non-empty intersection with $L$. 
We note that $\F(F)$ contains no cc-pants, because otherwise $F$ is capped off by (at least) three degenerate bc-annuli and all but two are not pierced by $L$ which contradicts Lemma~\ref{lemma:degeneratebc}.

Therefore up to isotopy we may consider that $F$ consists of two degenerate bc-annuli $R_1$ and $R_2$, each of which is pierced by $L$ (Fig~\ref{fig:degcase}-(1)).
We observe that all the b-arcs of $\F(F)$ are boundary-parallel: Because otherwise, by Lemma \ref{lemma:estimate} the condition (2) will be violated.  
All the c-circles of $\F(F)$ bound discs in their pages because otherwise, there must exist strongly essential b-arcs. 
Moreover each disc bounded by a c-circle is pierced by $L$ in one point. 
We replace $F$ with the degenerate bc-annulus $R_1$ capped off by the disc.
We perturb the disc to be foliated by concentric circles and has a local extremal point (Fig~\ref{fig:degcase}-(2)). 
Then flatten the extremal point paired with the hyperbolic point in $R_1$, this will turn $F$ into a desired decomposition sphere (Fig~\ref{fig:degcase}-(3)). 
During these operations the braid $L$ is fixed.  
\end{proof}

%Now we perform a move which is analogous to bypass move: we interchange the height of two saddles (hyperbolic points). Then the resulting open book foliation $\F(F)$ consists of two bb-tiles. (See figure \ref{fig:degcase})

\begin{figure}[htbp]
\begin{center}
%\ShowGrid
\SetLabels
(-.1*.95) (1)\\
(-.1*.6) (2)\\
(-.1*.25) (3)\\
(.55*.7) $L$\\
(.25*.94) $F$\\
(1*.95) $\F(F)$\\
(1.1*.89) $\leftarrow R_1$\\
(1.1*.75) $\leftarrow R_2$\\
(.03*.9) $R_1$\\
(.38*.83) $R_2$\\
\endSetLabels
\strut\AffixLabels{\includegraphics*[width=90mm]{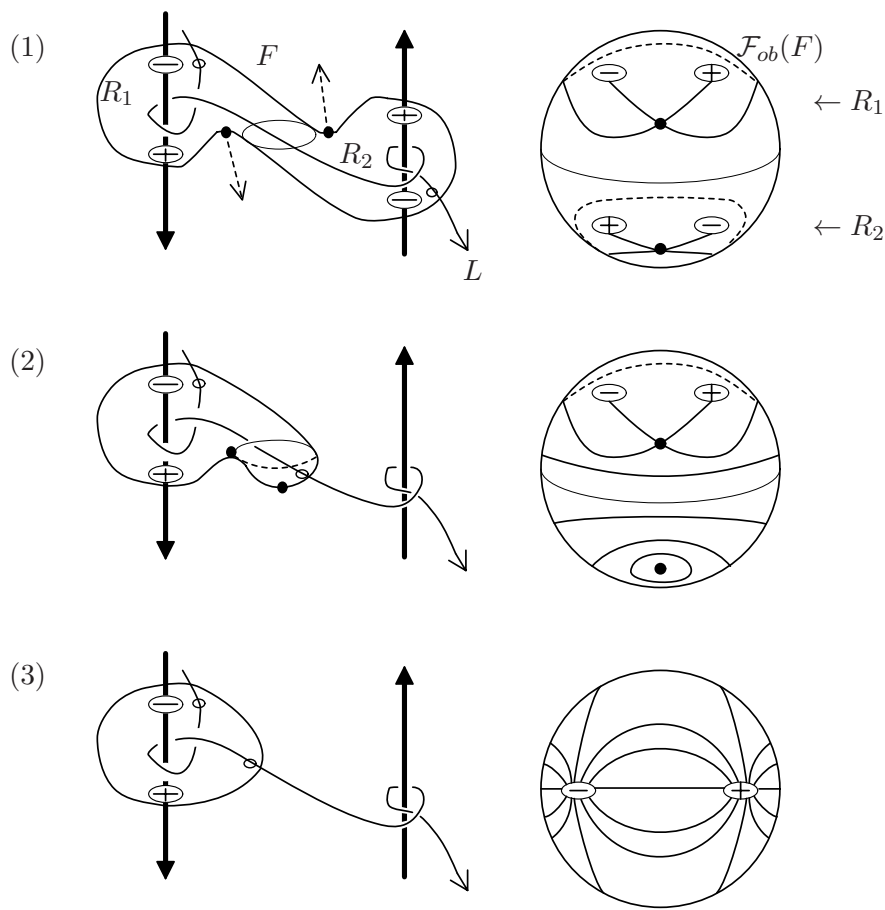}}
\caption{Special case: a decomposing sphere consisting of two degenerate bc-annuli.}
\label{fig:degcase}
\end{center}
\end{figure}

\section*{acknowledgement}
The authors thank Bill Menasco for constructive conversations on b-arc foliation change and Doug LaFountain for turning their attention to bypass moves.


\begin{thebibliography}{[1]}



\bibitem{Ben} D. Bennequin, 
{\em Entrelacements et {\'e}quations de Pfaff,} Ast{\'e}risque, 107-108, (1983) 87-161.

\bibitem{bf} J. Birman and E. Finkelstein,
{\em{Studying surfaces via closed braids,}}
J. Knot Theory Ramifications, \textbf{7}, No.3 (1998), 267-334. 

\bibitem{BM4} J. Birman, W. Menasco,    
{\em Studying links via closed braids. IV. Composite links and split links. }Invent. Math. \textbf{102} (1990), no. 1, 115-139. 

\bibitem{BM2} J. Birman, W. Menasco,   
{\em Studying links via closed braids. II. On a theorem of Bennequin. }
Topology Appl. \textbf{40} (1991), no. 1, 71-82.
 
\bibitem{BM5} J. Birman, W. Menasco,   
{\em Studying links via closed braids. V. The unlink.} 
Trans. Amer. Math. Soc. \textbf{329} (1992), no. 2, 585-606. 

\bibitem{BM1} J. Birman, W. Menasco,   
{\em Studying links via closed braids. I. A finiteness theorem.} 
Pacific J. Math. \textbf{154} (1992), no. 1, 17-36.

\bibitem{BM6} J. Birman, W. Menasco,   
{\em Studying links via closed braids. VI. A nonfiniteness theorem.} 
Pacific J. Math. \textbf{156} (1992), no. 2, 265-285.

\bibitem{BM3} J. Birman, W. Menasco,  
{\em Studying links via closed braids. III. Classifying links which are closed 3-braids.} 
Pacific J. Math. \textbf{161} (1993), no. 1, 25-113.

\bibitem{BM7} J. Birman, W. Menasco, 
{\em Special positions for essential tori in link complements.} 
Topology. \textbf{33} (1994), no.3, 525-556. 

\bibitem{BM4e} J. Birman, W. Menasco,    
{\em Erratum: ``Studying links via closed braids. IV. Composite links and split links''}, Invent. Math. \textbf{160} (2005), no. 2, 447-452. 


\bibitem{bm1}J. Birman, W. Menasco, 
{\em Stabilization in the braid groups. I. MTWS,} Geom. Topol. \textbf{10} (2006), 413--540. 

\bibitem{bm2} J. Birman, W. Menasco, 
{\em Stabilization in the braid groups. II. Transversal simplicity of knots,} Geom. Topol. \textbf{10} (2006), 1425-1452. 


\bibitem{DP} I. Dynnikov, M. Prasolov, 
{\em Bypasses for rectangular diagrams, proof of Jones' conjecture and related questions,}
arXiv:1206.0898v2.


\bibitem{e}J. Etnyre, 
{\em{Lectures on open book decompositions and contact structures,}}
Floer homology, gauge theory, and low-dimensional topology. Clay Math. Proc., \textbf{5},(2006), 103-141. 

\bibitem{GO}D. Gabai, U. Oertel, 
{\em Essential laminations in 3-manifolds}, 
Ann. Math. 130, (2), (1989), 41-73. 

%\bibitem{geiges} Geiges, Hansj\"org {\em An introduction to contact topology}. Cambridge Studies in Advanced Mathematics, 109. Cambridge University Press, Cambridge, 2008.

\bibitem{Giroux-convex}E. Giroux, 
{\em Convexit\'e en topologie de contact.}  
Comment. Math. Helv. {\bf 66} (1991), no. 4, 637-677.

%\bibitem{g} E.Giroux, 
%{\em{G\'eom\'etrie de contact: de la dimension trois vres les dimensions sup\'erieures,}}
%Proceedings of the International Congress of Mathematics, vol. II (Beijing, 2002), 405-414.

\bibitem{Honda} K. Honda, 
{\em On the classification of tight contact structures. I.} 
Geom. Topol. {\bf 4} (2000), 309-368.


\bibitem{hkm1}K. Honda, W. Kazez, G. Mati\'c, 
{\em{Right-veering diffeomorphisms of compact surfaces with boundary,}}
Invent. math. \textbf{169}, No.2 (2007), 427-449. 


\bibitem{ik1-1}T. Ito, K. Kawamuro, 
{\em Open book foliations,}
arXiv:1112.5874. 


\bibitem{ik1-2}T. Ito, K. Kawamuro,  
{\em Visualizing overtwisted discs in open books}, 
preprint. 

\bibitem{ik2}T. Ito, K. Kawamuro, 
{\em Essential open book foliation and fractional Dehn twist coefficient}, 
arXiv:1208.1559.  

\bibitem{L}D. LaFountain,
{\em On the uniform thickness property and contact geometry knot theory}, Ph.D. Thesis, State University of New York at Buffalo. 2010. 211 pp. ISBN: 978-1124-03350-1

\bibitem{M}H. R. Morton, 
{\em Closed braids which are not prime knots. }
Math. Proc. Cambridge Philos. Soc. {\bf 86} (1979), no. 3, 421-426. 

\bibitem{t}W. Thurston, 
{\em A norm on the homology of three-manifolds}, 
Mem. Amer. Math. Soc. {\bf 59} (1986), 99-130.


\end{thebibliography}
\end{document}